\documentclass[number]{elsarticle}
\usepackage{amsfonts, amsmath, amstext,amsthm, amssymb,bbm}

\journal{Journal of Algebra}

\usepackage[colorlinks]{hyperref}

\RequirePackage{color}
\definecolor{e-mail}{rgb}{0,.40,.80}
\definecolor{reference}{rgb}{.20,.60,.22}
\definecolor{citation}{rgb}{0,.40,.80}

\theoremstyle{plain}
\newtheorem{theorem}{Theorem}[section]
\newtheorem{lemma}[theorem]{Lemma}
\newtheorem{proposition}[theorem]{Proposition}
\newtheorem{fact}[theorem]{Fact}
\newtheorem{corollary}[theorem]{Corollary}

\theoremstyle{definition}
\newtheorem{definition}[theorem]{Definition}

\newcommand{\Inf}{\text{Inf}}
\newcommand{\set}[2]{\ensuremath{ {\left\{ #1 : #2 \right\}} }}
\DeclareMathOperator{\Deg}{deg}
\DeclareMathOperator{\ord}{ord}
\renewcommand{\deg}[1]{\ensuremath{\text{deg}(#1)}}
\newcommand{\F}{\mathbb{F}}
\newcommand{\Quot}{\mathrm{Quot}}
\newcommand{\Q}{\mathbb{Q}}
\newcommand{\R}{\mathcal{R}}
\newcommand{\Fbar}{\overline{F}}
\newcommand{\TKbar}{\overline{T_K}}
\newcommand{\fbar}{\overline{f}}
\newcommand{\gbar}{\overline{g}}
\newcommand{\gtilde}{\tilde{g}}
\newcommand{\htilde}{\tilde{h}}
\newcommand{\ptilde}{\tilde{p}}
\newcommand{\qtilde}{\tilde{q}}
\newcommand{\utilde}{\tilde{u}}
\DeclareMathOperator{\rank}{rank}

\newcommand{\xtilde}{\tilde{x}}
\newcommand{\ytilde}{\tilde{y}}
\newcommand{\ztilde}{\tilde{z}}
\newcommand{\xvec}{\vec{x}}
\newcommand{\del}{\delta}

\newcommand{\KY}{K\{ Y\}}

\newcommand{\ACF}{\ensuremath{\textbf{ACF}_0}\ }
\newcommand{\DCF}{\ensuremath{\textbf{DCF}_0}\ }

\newcommand{\la}{\langle}
\newcommand{\ra}{\rangle}

\def\diverges{\!\uparrow}
\def\converges{\!\downarrow}

\newcommand{\bfd}{\bf{d}}
\def\bfz{\bf{0}}
\def\Fhat{\ensuremath{\hat{F}}}
\def\Khat{\ensuremath{\hat{K}}}

\usepackage[totalheight=8.2in, totalwidth=5.5in,paperwidth=7in,paperheight=10in]{geometry}

\usepackage{setspace}

\begin{document}
\begin{frontmatter}
\title{Computing Constraint Sets for Differential Fields\fnref{conference}}

\author{Russell Miller\fnref{RM}}
\ead{Russell.Miller@qc.cuny.edu}
\address{
Department of Mathematics, CUNY Queens College,
65-30 Kissena Blvd., Queens, New York  11367, USA\\
Ph.D. Programs in Mathematics \& Computer Science, CUNY Graduate Center,
365 Fifth Avenue, New York, New York  10016, USA}

\author{Alexey Ovchinnikov\fnref{AO}}
\ead{aovchinnikov@qc.cuny.edu}
\address{Department of Mathematics,
CUNY Queens College, 65-30 Kissena Blvd.,
Queens, New York 11367, USA\newline
Ph.D. Program in Mathematics, CUNY Graduate Center,
365 Fifth Avenue, New York, New York  10016, USA}

\author{Dmitry Trushin}
\ead{trushindima@yandex.ru}
\address{Einstein Institute of Mathematics,
Edmond J. Safra Campus, Givat Ram,
The Hebrew University of Jerusalem,
Jerusalem, 91904, Israel}

\fntext[RM]{R.~Miller was
partially supported by the NSF grant DMS-1001306, by the \emph{Infinity Project}
of the Templeton Foundation at the Centre de Recerca Matem\'atica (grant \#13152),
by the Isaac Newton Institute as a Visiting Fellow,
by the European Science Foundation (grant \#4610), and by grants 
\#62632-00 40 and \#63286-00 41 from
the PSC-CUNY Research Award Program.
}
\fntext[AO]{A. Ovchinnikov was supported by the grants: NSF  CCF-0952591 and  PSC-CUNY  \#60001-40~41.}

\fntext[conference]{The extended abstract \cite{MOCiE} presented proofs of results
in Section \ref{sec:theorem}; those proofs are omitted here.  Much of Section \ref{sec:intro}
appeared as an extended abstract in the unpublished proceedings
of a weeklong workshop at the Mathematisches Forschungsinstitut Oberwolfach.}

\begin{abstract}
Kronecker's Theorem and Rabin's Theorem are fundamental
results about computable fields $F$ and the decidability of the
set of irreducible polynomials over $F$.  We adapt these theorems
to the setting of differential fields $K$, with constrained pairs
of differential polynomials over $K$ assuming the role of the
irreducible polynomials.  We prove that two of the three basic
aspects of Kronecker's Theorem remain true here,
and that the reducibility in one direction (but not the other)
from Rabin's Theorem also continues to hold.
\end{abstract}

\begin{keyword}
differential algebra \sep computable differential fields \sep differential closure
\MSC[2010]{primary 12H05 \sep secondary 03D25 \sep 13N10 \sep 34M15}
\end{keyword}

\end{frontmatter}
\fontsize{11pt}{11pt}\selectfont

\section{Introduction}
\label{sec:intro}

Differential algebra is the study of differential equations
from a purely algebraic standpoint.  The differential equations
studied use polynomials in a variable $Y$ and its derivatives
$\del Y, \del(\del Y),\ldots$, with coefficients from a specific
field $K$ which admits differentiation on its own elements
via the operator $\del$.  Such a field $K$ is known as a \emph{differential field}:
it is simply a field with one or more additional unary functions
$\del$ on its elements, satisfying the usual properties of derivatives:
$\del (x+y)=(\del x)+(\del y)$ and $\del (x\cdot y) = (x\cdot\del y)+ (y\cdot\del x)$.

It is therefore natural to think of the field elements as functions,
and standard examples include the field $\Q(X)$ of rational functions
in one variable under differentiation $\frac{d}{dX}$, and the field
$\Q(t,\del t,\del^2 t,\ldots)$ with a \emph{differential transcendental} $t$
satisfying no differential equation over the ground field $\Q$.
Additionally, every field becomes a differential field when we set
$\del x=0$ for all $x$ in the field; we call such a differential field
a \emph{constant field}, since an element whose derivative is $0$
is commonly called a \emph{constant}.  We give a good deal
of further background on computability and on differential fields
in Sections \ref{sec:computability}, \ref{sec:fields}, and \ref{sec:background}.
First, though, without dwelling on formal definitions,
we summarize the situation addressed in this article.

Although the natural examples are fields of functions,
the treatment of differential fields regards the field elements
merely as points.  There are strong connections between differential
algebra and algebraic geometry, with such notions as the ring $\KY$ of differential
polynomials (namely the algebraic polynomial ring $K[Y,\del Y,\del^2 Y,\ldots]$,
with each $\del^iY$ treated as a separate variable), differential ideal,
differential variety, and differential Galois group all being direct adaptations
of the corresponding notions from field theory.  

Characteristically, these
concepts behave similarly in both areas, but the differential versions
are often a bit more complicated.  In terms of model theory,
the theories \ACF and \DCF (of algebraically closed fields
and differentially closed fields, respectively, of characteristic $0$)
are both complete and $\omega$-stable with effective quantifier elimination,
but \ACF has Morley rank $1$, whereas \DCF has Morley rank $\omega$.
The higher Morley rank has elicited intense interest in \DCF
from model theorists.

Just as the algebraic closure $\Fbar$ of a field $F$ (of characteristic $0$)
can be defined as the prime model of the theory $\ACF\!\!\cup \Delta(F)$
(where $\Delta(F)$ is the atomic diagram of $F$), the differential closure
$\Khat$ of a differential field $K$ is normally taken to be the prime model
of $\DCF\!\!\cup\Delta(K)$.  This $\Khat$ is unique up to isomorphism over $K$,
but not always minimal:  it is possible for $\Khat$ to embed into itself over $K$
(i.e., fixing $K$ pointwise) with image a proper subset of itself.  This has to do with
the fact that some $1$-types over $K$ are realized infinitely often in $\Khat$,
so that the image of the embedding can omit some of those realizations.

As a prime model, the differential closure realizes exactly those $1$-types
which are principal over $K$, i.e., generated by a single formula with
parameters from $K$.  It therefore omits the type of a differential transcendental
over $K$, since this type is not principal, and so every element of $\Khat$
satisfies some differential polynomial over $K$.  On the other hand, the type
of a \emph{transcendental constant}, i.e., an element $x$ with $\del x=0$
but not algebraic over $K$, is also non-principal and hence is also omitted, even though
such an element would be ``differentially algebraic'' over $K$.

The goal of this article is to adapt the two fundamental
theorems from computable field theory to computable differential fields.
These two theorems, each used very frequently in work on computable fields,
are the following.

\begin{theorem}
[{Kronecker's Theorem (1882); see \cite{K1882} or \cite{E84},
or Theorem \ref{theorem:Kronecker} below}] \hspace{0.1in}
\begin{enumerate}
\item
The field $\Q$ has a splitting algorithm.  That is,
the set of irreducible polynomials in $\Q[X]$,
commonly known as the \emph{splitting set} of $\Q$,
is decidable.
\item
If a computable field $F$ has a splitting algorithm,
so does the field $F(x)$, for every element
$x$ algebraic over $F$ (within a larger computable field).
\item
If a computable field $F$ has a splitting algorithm,
then so does the field $F(t)$, for every element
$t$ transcendental over $F$.
\end{enumerate}
(The algorithms deciding irreducibility in Parts II and III
are different, and no unifying algorithm exists.)
\end{theorem}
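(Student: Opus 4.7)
My plan is to reduce factorization over $\Q[X]$ to factorization over $\mathbb{Z}[X]$ via Gauss's Lemma: a primitive integer polynomial factors over $\Q$ if and only if it factors over $\mathbb{Z}$, and any rational factor can be scaled so its primitive part lies in $\mathbb{Z}[X]$ with the same degree.  Then for $f \in \mathbb{Z}[X]$ of degree $n$, I would apply Kronecker's interpolation technique: any proper factor $g$ may be assumed to have degree $m \le n/2$, and such $g$ is uniquely determined by its values at $m+1$ distinct integer points $a_0,\ldots,a_m$; since $g \mid f$ in $\mathbb{Z}[X]$, each value $g(a_i)$ is one of the finitely many divisors of $f(a_i)$.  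The algorithm enumerates these finitely many divisor tuples, reconstructs each candidate $g$ by Lagrange interpolation, and checks integrality and divisibility of $g$ into $f$; irreducibility is declared if no nontrivial $g$ is found for any $m \le n/2$.

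\textbf{Part II.}  Write $F(x) = F[Y]/(p(Y))$, where $p$ is the minimal polynomial of $x$ over $F$.  Given $q(Z) \in F(x)[Z]$, lift it to $Q(Y,Z) \in F[Y,Z]$ whose $Y$-degree is strictly less than that of $p$, and consider the \emph{norm} $N(q)(Z) := \mathrm{Res}_Y(p(Y), Q(Y,Z)) \in F[Z]$.  Any factorization $q = gh$ in $F(x)[Z]$ forces $N(q) = N(g) N(h)$ in $F[Z]$, so irreducibility of $N(q)$ over $F$ implies irreducibility of $q$ over $F(x)$.  The converse can fail, so I would invoke the classical Kronecker twist: substitute $Z \mapsto Z - sY$ for integers $s = 0, 1, 2, \ldots$, obtain polynomials $q_s$ and their norms $N(q_s)$, and argue that for any sufficiently generic $s$ the irreducible factors of $N(q_s)$ over $F$ correspond exactly to the irreducible factors of $q$ over $F(x)$.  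Running the splitting algorithm for $F$---upgraded to a factoring algorithm by the standard reduction available for any computable splittable field---on each $N(q_s)$ until a suitable $s$ is found then permits us to reconstruct the factorization of $q$.

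\textbf{Part III.}  By Gauss's Lemma, $q \in F(t)[X]$ is irreducible if and only if, after clearing $t$-denominators and removing the $F[t]$-content, it is irreducible in $F[t,X]$ as a polynomial in $X$ over $F[t]$.  For $f(t,X) \in F[t,X]$, any factor has $t$- and $X$-degrees bounded by those of $f$, so I would implement a two-variable Kronecker scheme: specialize $t$ to sufficiently many elements $a \in F$ (available since $F$ is computable), factor each $f(a,X) \in F[X]$ using the factoring algorithm for $F[X]$, and interpolate across the specializations to recover candidate factors over $F[t]$, each then verifiable by polynomial division.  The main obstacle across the three parts is the faithfulness step in Part II: while the forward direction of the norm is immediate, showing that an effectively bounded $s$ renders the Kronecker twist faithful requires a uniqueness argument based on the separability of $p$ and on the distinctness of the sums $\alpha_i - s\beta_j$ among roots of the relevant polynomials in an algebraic closure of $F$.
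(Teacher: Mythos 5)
The paper itself contains no proof of this statement: it is classical material, quoted from \cite{K1882} (see also \cite{E84}) and restated as Theorem \ref{theorem:Kronecker} with a citation only, so your attempt can only be measured against the standard arguments. Your Part I is precisely Kronecker's interpolation method and is sound, up to the harmless caveat that the evaluation points must be chosen with $f(a_i)\neq 0$ (a vanishing value already exhibits a linear factor). In Part II, the norm--resultant reduction with a shift $Z\mapsto Z-sx$ is a legitimate classical route, and your remark that a splitting algorithm over a computable field upgrades to a full factorization algorithm by brute-force search is correct and is exactly the standard device. However, as you yourself flag, the faithfulness of the twist rests on separability of $p$ (and on first reducing $q$ to the squarefree case): the ``only finitely many bad $s$'' argument and the distinctness of the shifted root sums fail for an inseparable minimal polynomial, while the theorem as stated concerns arbitrary computable fields, not only characteristic $0$. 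So either you must restrict to characteristic $0$ (enough for this paper's purposes) or add the separate classical treatment of purely inseparable steps via $p^e$-th powers.

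The genuine gap is in Part III. Your plan specializes $t$ at ``sufficiently many'' elements $a\in F$, but the statement covers every computable $F$ with a splitting algorithm, including finite fields, where there are simply not enough specialization points to interpolate (take $F=\F_2$ and $f$ of large $t$-degree); moreover one must avoid the finitely many $a$ at which the leading coefficient in $X$ or the discriminant vanishes, which again presupposes an infinite supply of points. Even over an infinite $F$ you leave untreated the recombination problem: the factorizations of the various $f(a_i,X)$ are unordered and only determined up to scalars, so matching a factor at $a_1$ with the ``same'' factor at $a_2$ (and pinning down its leading coefficient) requires an additional finite search that your sketch does not set up. The classical proof sidesteps all of this: after Gauss's Lemma one applies the Kronecker substitution $X\mapsto t^N$ with $N>\deg_t f$, which injectively encodes every candidate factor of bounded $t$-degree into a divisor of the single univariate polynomial $f(t,t^N)\in F[t]$; the divisors of that polynomial are computable from the (upgraded) splitting algorithm for $F$, and each decoded candidate is then tested by division. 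That argument is uniform and works verbatim for finite $F$, so it, rather than specialization-and-interpolation, is what the cited sources use.
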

\begin{theorem}
[{Rabin's Theorem (1960); see \cite{R60},
or Theorem \ref{theorem:Rabin} below}]\hspace{0.1in}
\begin{enumerate}
\item
Every computable field $F$ has a \emph{Rabin embedding},
i.e., a computable field embedding $g:F\to E$ such that
$E$ is a computable, algebraically closed field which is
algebraic over the image $g(F)$.
\item
For every Rabin embedding $g$ of $F$, the image $g(F)$
is Turing-equivalent to the splitting set $S_F$ of $F$.
\end{enumerate}
\end{theorem}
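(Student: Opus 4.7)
The plan is to treat the two parts separately: construct the embedding in Part I, then analyze its Turing degree in Part II.

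For Part I, I would build $E$ stage-by-stage from an effective enumeration $p_0,p_1,p_2,\ldots$ of the monic polynomials in $F[X]$. At stage $s$, having already built a finitely generated extension $E_s$ of $F$, I introduce $\deg(p_s)$ fresh symbols $a_{s,1},\ldots,a_{s,n}$ and impose the factorization $p_s(X)=\prod_i (X-a_{s,i})$ in $E_{s+1}$, extending the field operations accordingly. To keep $E$ computable, I maintain an algebraic presentation of $E_s$ over $F$ and use linear algebra (inside the part of $E_s$ we already know) to collapse any forced relations. The embedding $g\colon F\to E$ is the natural inclusion. The main subtlety is that without a splitting algorithm for $F$ we cannot tell when a new $a_{s,j}$ coincides with an existing element of $E_s$, and so $g(F)$ need not be decidable inside $E$ even though $E$ itself is computable.

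For Part II, the easier direction is $g(F)\le_T S_F$. Given $b\in E$, the hypothesis that $E$ is algebraic over $g(F)$ lets me search effectively for a polynomial $p\in F[X]$ with $g(p)(b)=0$ (by enumerating $F[X]$ and using the computable zero-test in $E$). Using $S_F$, I factor $p$ into irreducibles over $F$, evaluate each factor at $b$ in $E$, and locate the minimal one; then $b\in g(F)$ iff that minimal factor is linear.

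The substantive direction is $S_F\le_T g(F)$. Given $p\in F[X]$, I first reduce to the separable case by replacing $p$ with $p/\gcd(p,p')$, a computation inside $F$. Because $E$ is a computable algebraically closed field, I can list all roots $b_1,\ldots,b_n$ of $g(p)$ in $E$. The central observation is that $p$ is reducible over $F$ exactly when there is a proper nonempty subset $S\subsetneq\{1,\ldots,n\}$ such that the elementary symmetric functions of $\{b_i\}_{i\in S}$, computed inside $E$, all lie in $g(F)$; for then $\prod_{i\in S}(X-b_i)$ has coefficients in $g(F)$ and pulls back to a nontrivial $F$-factor of $p$. With the $g(F)$-oracle I test each of the finitely many candidate subsets $S$, declaring $p$ irreducible iff none succeeds. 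I expect the main obstacle to be in Part I: arranging the ``optimistic'' adjunction of roots so that algebraic relations discovered at later stages can be imposed on the presentation without destroying the computability of $E$, and without ever having needed to decide irreducibility in $F$ along the way.
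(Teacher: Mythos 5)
The paper itself does not prove this theorem; it quotes it from Rabin \cite{R60} (restated as Theorem \ref{theorem:Rabin}), so your proposal has to be measured against the standard proof. Your Part II is essentially that standard argument and is sound in substance: $g(F)\leq_T S_F$ by locating, among the $S_F$-computed irreducible factors of some $p\in F[X]$ vanishing at $b$, the unique factor that vanishes at $b$ and checking whether it is linear; and $S_F\leq_T g(F)$ by listing the roots $b_1,\ldots,b_n$ of $g(p)$ in $E$ and testing whether some proper nonempty subproduct $\prod_{i\in S}(X-b_i)$ has all coefficients in $g(F)$ (such a factor pulls back to a proper factor of $p$ in $F[X]$ by uniqueness of polynomial division, and conversely any factorization over $F$ produces such an $S$). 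One slip: literally replacing $p$ by $p/\gcd(p,p')$ destroys the answer when $p$ is a power of an irreducible (e.g.\ $p=q^2$: the separable part is irreducible but $p$ is not); in characteristic $0$ the fix is to declare $p$ reducible outright whenever $\gcd(p,p')$ is nonconstant, and since the theorem is stated for all computable fields, positive characteristic with inseparability would need a separate (routine but not automatic) argument.

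The genuine gap is Part I, which is where the content of Rabin's theorem actually lies, and your sketch defers exactly the step that constitutes the theorem. Imposing $p_s(X)=\prod_i(X-a_{s,i})$ on fresh symbols over $E_s$ does not yield a field but the universal splitting algebra, a product of fields; cutting it down to a field means choosing a maximal ideal, and the identifications this requires are not ``forced relations'' recoverable by linear algebra. For instance, whether some $a_{s,i}$ must be identified with an element already in $E_s$ is the root-set problem for $E_s$, and pinning down which symbols are roots of a common irreducible factor amounts to factoring $p_s$ over $E_s$ --- both Turing-equivalent to the very splitting set you are not allowed to consult. Worse, in a computable presentation equality must be settled when an element enters the domain: you cannot ``impose'' relations discovered at later stages between numbers already presented as distinct, so the plan of collapsing relations as they appear either stalls or fails to produce a decidable equality. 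The known constructions resolve this differently: Rabin works in the polynomial ring $F[x_1,x_2,\ldots]$ and builds a computable \emph{maximal} ideal by deciding membership of one polynomial at a time, with each consistency check made decidable by Hermann-type degree bounds for ideal membership over a computable field (so no intermediate field with chosen roots is ever presented); alternatively one can use the fact that $\textbf{ACF}_0\cup\Delta(F)$ is complete and decidable and build a decidable prime model by effectively completing formulas to principal types, the route Harrington \cite{H74} later generalized to $\textbf{DCF}_0$. Without an ingredient of this kind, Part I remains unproved in your proposal, as you yourself suspect.
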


For differential fields, the analogue of the first part of Rabin's Theorem
was proven in 1974 by Harrington \cite{H74}, who showed that for every computable differential
field $K$, there is a computable embedding $g$ of $K$ into a computable,
differentially closed field $L$ such that $L$ is a differential closure
of the image $g(K)$.  Harrington's proof used a different method
from that of Rabin, and therefore did not address the question of the
Turing degree of the image.  Indeed, the first question to address,
in attempting to adapt either of these theorems for differential fields,
is the choice of an appropriate analogue for the splitting set $S_F$
in the differential context.

Kronecker, like many others before and since,
saw the question of reducibility of a polynomial
in $F[X]$ as a natural question to ask, with applications
in a broad range of areas.  However,
with twentieth century model theory, we can specify more exactly the reasons
why it is important.  Specifically, every irreducible polynomial $p(X)\in F[X]$
generates a principal type over the theory $\ACF\!\!\cup\Delta(F)$,
and every principal type is generated by a unique monic irreducible polynomial.
(More exactly, the formula $p(X)=0$ generates such a type.)

On the other hand, no reducible polynomial generates such a type
(with the exception of powers $p(X)^n$ of irreducible polynomials,
in which case $p(X)$ generates the same type).
So the splitting set $S_F$ gives us a list of generators of
principal types, and every element of $\Fbar$ satisfies exactly
one polynomial on the list.  Moreover, since these generating formulas
are quantifier-free, we can readily decide whether a given element satisfies
a given formula from the list or not.  Thus, a decidable splitting set allows us
to identify elements of $\Fbar$ very precisely, up to their orbit over $F$.

From model theory, we find that the set $\TKbar$ of \emph{constrained pairs}
over a differential field $K$ plays the same role for the differential closure.
A pair $(p,q)$ of differential polynomials from $\KY$ is \emph{constrained}
if
\begin{itemize}
\item $p$ is monic and irreducible and of greater order than $q$
(i.e., for some $r$, $p(Y)$ involves $\del^rY$ nontrivially
while $q(Y)\in K[Y,\del Y,\ldots,\del^{r-1}Y]$), and
\item
for every $x,y\in\Khat$, if $p(x)=p(y)=0$ and $q(x)\neq 0\neq q(y)$,
then for every $h\in\KY$, either $h(x)=0 = h(y)$
or $h(y)\neq 0\neq h(x)$.  This says that, if $x$ and $y$ both
\emph{satisfy} the pair $(p,q)$, then the differential fields
$K\la x\ra$ and $K\la y\ra$ that they generate within $\Khat$
must be isomorphic, via an isomorphism fixing $K$ pointwise
and mapping $x$ to $y$.
\end{itemize}  This is sufficient to ensure
that the formula $p(Y)=0\neq q(Y)$ generates a principal type
over $\DCF\!\!\cup\Delta{K}$, and conversely, every principal type
is generated by such a formula with $(p,q)$ a constrained pair.

For these reasons, model theorists have come to see
constrained pairs as the appropriate differential analogue
of irreducible polynomials.  Computability theorists
have reserved their judgment.  They would go along
with the model theorists if it could be shown that
every other list of existential generators
of the principal types over $\DCF\!\!\cup\Delta{K}$
has complexity $\geq_T T_K$, or even if this could be
shown at least for those computable differential fields
appearing commonly in differential algebra; but this is not known.
Essentially, the question turns on whether Kronecker's
Theorem can be adapted and proven in the setting of
computable differential fields $K$ and constraint sets $T_K$.

With this background, we may state our results,
first addressing Rabin's Theorem and then Kronecker's.
\begin{theorem}[{See Theorem \ref{theorem:constraints} and Proposition \ref{prop:FRg}}]
For every embedding $g$ of a computable differential field
as described by Harrington in \cite{H74}, the image
$g(K)$ is Turing-computable from the set $\TKbar$.  So too is algebraic independence
of finite tuples from $\Khat$,
and also the function mapping each $x\in\Khat$ to its
minimal differential polynomial over $K$. However, there do exist
such embeddings $g$ for which $\TKbar$ has no Turing reduction to $g(K)$.
\end{theorem}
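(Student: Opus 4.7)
The statement comprises three positive reductions---of $g(K)$, of the minimal-differential-polynomial function, and of algebraic independence of tuples---all to $\TKbar$, together with a negative construction showing the reverse can fail.

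All three positive reductions rest on a single oracle procedure. Given $x\in\Khat$, presented inside the computable differentially closed field $L\supseteq g(K)$, use $\TKbar$ as oracle to enumerate constrained pairs $(p,q)$ and test the quantifier-free condition $g(p)(x)=0\neq g(q)(x)$ in $L$. Because $x$ realizes a principal type over $g(K)$ in $L$, such a pair must appear, and the $p$ returned is the minimal differential polynomial of $x$ over $K$: any constrained pair $x$ satisfies has the same first coordinate, namely the unique minimal monic irreducible generator of the differential prime ideal $I(x)\subseteq\KY$. This already handles the minimal-polynomial function. For $g(K)\leq_T\TKbar$, observe that $x\in g(K)$ precisely when this minimal $p$ is linear, $p(Y)=Y-a$: one direction is immediate, and if $x=g(a)$ then $Y-a$ is the minimal differential polynomial of $x$ and appears as the first coordinate of any constrained pair---for example $(Y-a,1)$---that the search returns.

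For algebraic independence of a tuple $(x_1,\ldots,x_n)$ over $g(K)$, I would proceed iteratively, observing that the tuple is algebraically dependent iff some $x_i$ is classically algebraic over $K(x_1,\ldots,x_{i-1})$. Use the previous procedure to compute the minimal differential polynomial $p_1$ of $x_1$, whose order $\ord p_1$ determines the transcendence degree of $K\la x_1\ra/K$. To pass to $K\la x_1\ra$, translate: a constrained pair $(\tilde p,\tilde q)$ for $x_2$ over $K\la x_1\ra$ corresponds to joint constrained data in $K\{Y_1,Y_2\}$, obtained by clearing denominators and incorporating the relations of $K\la x_1\ra$ over $K$, which can be enumerated with $\TKbar$ as oracle. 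Iterating yields, at each step, an effective description of the transcendence degree of the newest extension, whose sum across stages equals $n$ exactly when the tuple is algebraically independent.

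For the negative direction I would construct a computable differential field $K$ whose constraint set $\TKbar$ encodes a non-computable c.e.\ set $A$, together with a Harrington embedding $g\colon K\to L$ for which $g(K)$ is nonetheless decidable in the computable $L$. Stage by stage, fix a sequence of candidate constrained pairs $(p_n,q_n)$ and adjoin differential generators whose behavior in $\Khat$ controls whether $(p_n,q_n)$ remains constrained in the final $K$, arranging $(p_n,q_n)\in\TKbar$ iff $n\in A$; the freedom here exploits the non-principal types (such as transcendental constants) that $\Khat$ may omit. Decidability of $g(K)$ is secured by running Harrington's construction in parallel and marking each newly produced element of $L$ as ``in image'' or ``not'', which can be kept computable because the $A$-coding targets a universal statement about all potential roots in $\Khat$, not any individual element of the image. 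The principal obstacle---and where I expect the main technical effort---is verifying this separation: one must ensure that the coding of $A$ into the quantifier-alternating property of being constrained does not leak into the purely existential condition of membership in $g(K)$, so that $g(K)$ remains decidable while $\TKbar$ inherits the Turing degree of $A$.
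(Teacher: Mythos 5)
Your reduction of $g(K)$ and of the minimal-differential-polynomial function to $\TKbar$ is sound and is essentially the intended argument: search, using the oracle, for a constrained pair satisfied by $x$ (one exists because $\Khat$ is constrained over $g(K)$, and its first coordinate is unique by Fact \ref{fact:order}), and note that $x\in g(K)$ iff that first coordinate has the form $Y-a$. The third positive claim, however, has a genuine gap. Your criterion --- that the sum over $i$ of $\ord_{K\la x_1,\ldots,x_{i-1}\ra}(x_i)$ equals $n$ exactly when the tuple is algebraically independent --- is false: if $x\in\Khat$ has order $2$ over $K$, the tuples $(x,\del x)$ and $(x,x^2)$ have the identical order profile $(2,0)$, yet $\{x,\del x\}$ is a transcendence basis of $K\la x\ra$ over $K$ while $\{x,x^2\}$ is algebraically dependent; so no function of these orders can decide independence. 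Moreover, the step you lean on --- that constrained pairs over $K\la x_1\ra$ can be recognized from $\TKbar$ by passing to ``joint constrained data'' in two differential indeterminates --- cannot simply be asserted: deciding $T_{K\la x_1\ra}$ from $T_K$ is precisely the content of Theorem \ref{theorem:fgalg}, whose proof requires a nontrivial derivation and Kolchin's primitive element theorem (Theorem \ref{theorem:Kolchin}). The paper's Theorem \ref{theorem:constraints} states the nontriviality hypothesis explicitly for exactly this part, and the intended route (cf.\ Theorem \ref{theorem:Kolchin} and Lemma \ref{lemma:algdep}) is to find a single generator $u$ of $K\la x_1,\ldots,x_n\ra$, obtain the transcendence basis $\{u,\del u,\ldots,\del^{r-1}u\}$ from the minimal differential polynomial of $u$, and then decide independence of the tuple relative to that basis, both dependence and independence being $\Sigma^0_1$ there. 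Your argument never invokes this hypothesis and would claim the result for constant fields, which the paper does not.

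For the negative direction you offer a plan rather than a proof. The two claims that constitute the theorem --- that one can arrange $(p_n,q_n)\in\TKbar$ iff $n\in A$ for a noncomputable c.e.\ set $A$, and that the image $g(K)$ of a Harrington embedding of the resulting field can simultaneously be kept computable --- are exactly what must be verified, and you explicitly defer that verification (``the principal obstacle \ldots is verifying this separation''). Nothing in the sketch explains why coding $A$ into constrainedness does not also destroy decidability of membership in $g(K)$; note that in the one explicit example of this flavor discussed after Theorem \ref{theorem:constraints}, making $\TKbar$ noncomputable forces $g(K)$ to be noncomputable as well, so the separation genuinely requires a concrete construction and a verification lemma, neither of which is supplied. (For what it is worth, the paper itself does not reprove any of these facts here: the displayed statement is proved by citing Theorem \ref{theorem:constraints} and Proposition \ref{prop:FRg}, whose proofs appear in \cite{MOCiE}.)
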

\begin{theorem}[{See Theorem \ref{theorem:fgalg} and Theorem \ref{theorem:fgtrans}}]
Let $K$ be a computable nonconstant differential field, and let $z$
be an element of a larger computable differential field $L\supseteq K$
such that $K$ is computably enumerable within $L$.  (So $K\la z\ra$
is also c.e.\ within $L$, and thus has a computable presentation.)
\begin{itemize}
\item
If $z$ is constrained over $K$, then
$\overline{T_{K\la z\ra}}$ is Turing-computable from $\TKbar$.
\item
If $z$ is differentially transcendental and $\Khat$ is not algebraic over $K$, then
$\overline{T_{K\la z\ra}}$ is Turing-computable from $\TKbar$.
\end{itemize}
\end{theorem}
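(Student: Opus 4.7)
The computable enumerability of $K$ inside $L$ gives $K\la z\ra$ a computable presentation with effective normal forms, as noted in the theorem statement, so the task reduces to describing, with oracle $\TKbar$, a decision procedure for membership of an input pair $(P,Q)\in K\la z\ra\{Y\}^2$ in $\overline{T_{K\la z\ra}}$. By clearing denominators and treating $z$ as a new differential indeterminate $Z$, we lift $(P,Q)$ to a pair $(\tilde P(Z,Y),\tilde Q(Z,Y))\in K\{Z,Y\}$; the plan in both parts is to rephrase constrainedness of $(P,Q)$ over $K\la z\ra$ in terms of conditions on this two-variable lift that $\TKbar$ can verify.

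For the constrained case, I would first use the c.e.\ enumeration of $K$ in $L$ together with the oracle to search for and certify a constraint pair $(p_0,q_0)$ for $z$ over $K$: enumerate candidates, check each against $\TKbar$, and verify $p_0(z)=0\neq q_0(z)$ in $L$. Fixing the resulting $K$-embedding $K\la z\ra\hookrightarrow\Khat$, the differential closure of $K\la z\ra$ can be identified with (a subfield of) $\Khat$. The order and irreducibility conditions on $P$ translate to analogous conditions on $\tilde P$ modulo the differential ideal generated by $p_0$ and saturated at $q_0$, each verifiable by queries to $\TKbar$. The type-isolation clause — that every two realizations of $P=0\wedge Q\neq 0$ in $\Khat$ generate $K\la z\ra$-isomorphic extensions — is the crux, and I would convert it into the statement that a single explicit two-variable pair, assembled from $(p_0,q_0,\tilde P,\tilde Q)$ by a Ritt-style characteristic-set reduction, belongs to $\TKbar$ (with the second variable effectively pinned down by the constraint on $Z$).

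For the differentially transcendental case, $\tilde P$ and $\tilde Q$ genuinely live in $K\{Z,Y\}$ with $Z$ differentially free over $K$. A Gauss-type lemma for differential polynomials should reduce the irreducibility and order conditions on $(P,Q)$ over $K\la z\ra$ to analogous conditions on the primitive form of $\tilde P$ in $K\{Z,Y\}$, decidable relative to $\TKbar$. For the isolation clause, I would exploit the hypothesis that $\Khat$ is not algebraic over $K$ to produce, via $\TKbar$, a constrained element $\alpha\in\Khat$ of sufficiently high order, and then specialize $Z\mapsto\alpha$: for suitably generic $\alpha$, the pair $(\tilde P(\alpha,Y),\tilde Q(\alpha,Y))$ should be constrained over $K\la\alpha\ra$ if and only if $(P,Q)$ is constrained over $K\la z\ra$, and the former question is $\TKbar$-decidable by the constrained case applied to the computable differential field $K\la\alpha\ra\subseteq\Khat$.

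The main obstacle, in both parts, is the semantic isolation clause: order and irreducibility transfer from $K\la z\ra$ to $K\{Z,Y\}$ by essentially algebraic manipulations, but converting ``any two realizations yield isomorphic extensions over $K\la z\ra$'' into a syntactic query on $\TKbar$ requires a two-variable reduction whose design must neither introduce spurious components (making a constrained pair look unconstrained) nor lose real ones (breaking the converse). Controlling the interplay between the parameter role of $Z$ and the type-variable role of $Y$ — the choice of reduction ordering, the bookkeeping of separants and initials, and, in the transcendental case, the correct use of the ``$\Khat$ not algebraic over $K$'' hypothesis to guarantee a generic enough $\alpha$ — is where the bulk of the delicate work will lie.
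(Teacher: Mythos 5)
Your overall architecture for the transcendental case is essentially the paper's: specialize $z$ to a constrained element $\tilde z\in\Khat$ of sufficiently high order (this is where the hypothesis that $\Khat$ is not algebraic over $K$ enters) and reduce to the constrained case over $K\la\tilde z\ra$. But in both halves the crux you yourself flag as ``delicate work'' is precisely what is missing, and in the constrained case your proposed mechanism does not work as stated. You want to convert the isolation clause into the statement that ``a single explicit two-variable pair\ldots belongs to $\TKbar$''; however, $T_K$ is by definition a set of pairs of differential polynomials in \emph{one} indeterminate over $K$, so a two-variable pair cannot be queried against it. Some device is needed to collapse the two generators $x$ and $z$ into a single element, and this is exactly what the paper uses: Kolchin's differential primitive element theorem (which is why $K$ must be nonconstant --- a hypothesis your proposal never exploits or explains) produces $u$ with $K\la u\ra=K\la x,z\ra$ together with differential rational functions $f,g,h$ relating $u,x,z$; one then finds a constrained pair $(p_u,q_u)\in\TKbar$ for $u$ and pulls it back through $f,g,h$ to manufacture a pair $(p,\qtilde)$ over $K\la z\ra$ that generates a principal type and is satisfied by $x$. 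The actual decision step is then a comparison of the differential varieties $V(p,q)$ and $V(p,\qtilde)$ inside the computable $\Khat$, which is decidable by quantifier elimination / the Differential Nullstellensatz; nothing in your sketch plays this role, and ``Ritt-style characteristic-set reduction with the second variable pinned down'' does not by itself yield a query to $\TKbar$ or any other decidable test.

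For the transcendental case, saying ``for suitably generic $\alpha$'' the specialized pair is constrained iff the original one is conceals the entire content of the theorem. The paper must prove two implications: (i) if $(p,q)$ is constrained over $K\la z\ra$ then its specialization is constrained over $K\la\tilde z\ra$ for \emph{every} $\tilde z$ satisfying a suitable $(f,g)\in\TKbar$ of high enough order --- this uses the ideal-theoretic characterization of constrainedness ($1\in\sqrt{[p,h]}:q$ for all $h$ of lower rank, the paper's Corollary preceding the theorem); and (ii) if $(p,q)$ is not constrained, then a witness $h$ with $h(z,x)=0\neq h(z,y)$ survives \emph{every} such specialization, which requires an explicit numerical order bound (order of $f$ at least $2b+1$ where $b$ bounds the mixed order of $p_0$) established via characteristic sets for the elimination ranking $z<Y$, an effective bound from the literature on orders of characteristic sets, and the Rosenfeld Lemma to rule out $1\in[C]:H_C^\infty$. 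Without these two directions and the concrete bound, your ``iff for generic $\alpha$'' is an assertion, not an argument, and the resulting search procedure has no guarantee of terminating with a correct answer. So the proposal identifies the right reduction skeleton but leaves genuine gaps at exactly the points where the proof lives.
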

So the last two parts of Kronecker's Theorem hold in most cases.
The first part remains open:  it is unknown whether the set $\overline{T_{\Q}}$
of constrained pairs over the constant differential field $\Q$ is decidable.
We regard this as the most important question currently open in this
area of study.  A positive answer would likely give us a much better
intuition about the structure of various simple differentially
closed fields, well beyond any current understanding.
It would also be desirable to make the failure of the second part of Rabin's
Theorem more precise, by finding sets which are
always equivalent to the Rabin image $g(K)$, and by finding sets
which are always equivalent to $\TKbar$.
\subsection*{Acknowledgments} We are grateful to P.~Cassidy, D.~Marker,
M.~Singer, W.~Sit, and the referee for their helpful suggestions.

\section{Background in Computability}
\label{sec:computability}

We recall here the concepts from computability theory which will be
essential to our work on differential fields.  Computable
functions are defined in \cite{S87}, and indeed,
several very different definitions give rise to the
same class of functions.  Functions on the set $\omega$
of nonnegative integers are usually identified with
their graphs in $\omega^2$, and we then code $\omega^2$
into $\omega$, so that the graph corresponds to a
subset of $\omega$.  Conversely, for our purposes, a subset $A$
of $\omega$ may be identified with its characteristic function $\chi_A$,
and we say that $A$ is \emph{computable} (or \emph{decidable})
if the function $\chi_A$ is computable.

The partial computable functions
(those for which the computation procedure halts
on certain inputs from $\omega$,
but not necessarily on all of them) can be enumerated
effectively, and are usually denoted as $\phi_0,\phi_1,\ldots$,
with the index $e$ coding the program for computing
$\phi_e(x)$ on $x\in\omega$.  The domains of these functions
constitute the \emph{computably enumerable sets}, and we write
$W_e$ for the domain of $\phi_e$.  These are precisely the sets
which are definable by $\Sigma^0_1$ formulas,
i.e., sets of the form
$$ \set{x\in\omega}{\exists y_1\cdots\exists y_m~(x,y_1,\ldots,y_m)\in R},$$
where $m\in\omega$ is arbitrary and $R$ may be
any computable subset of $\omega^{m+1}$.
We usually write ``$\phi_e(x)\converges=y$'' to indicate
that the computation of $\phi_e$ on input $x$ halts
and outputs $y$, and so $\phi_e(x)\converges$ iff $x\in W_e$;
otherwise we write $\phi_e(x)\diverges$.
Also, if the computation halts within $s$ steps,
we write $\phi_{e,s}(x)\converges$.  The set
$W_{e,s}$ is the domain of $\phi_{e,s}$,
so $W_e=\cup_s W_{e,s}$.  Every set $W_{e,s}$
is computable (although the union $W_e$ may not be),
and we take it as a convention of our computations
that only numbers $\leq s$ lie in $W_{e,s}$.

More generally, we define the $\Sigma^0_n$ formulas
by induction on $n$.  The $\Sigma^0_0$ formulas are those
formulas with free variables $x_1,\ldots,x_m$ which define computable
subsets of $\omega^m$ (for any $m\in\omega$).  A $\Pi^0_n$ formula
is the negation of a $\Sigma^0_n$ formula.  Therefore,
the $\Pi^0_0$ formulas are exactly the $\Sigma^0_0$ formulas,
but for $n>0$ this is no longer true:  for instance
a $\Pi^0_1$ formula is universal, in the
same sense that a $\Sigma^0_1$ formula is existential.
A $\Sigma^0_{n+1}$ formula in the variable $x$
is a formula of the form
$$ \exists y_1\cdots\exists y_m~R(x,y_1,\ldots,y_m),$$
where $R$ is a $\Pi^0_n$ formula.  Thus the subscript
counts the number of quantifier alternations.
(We sometimes omit the superscript $0$, which refers
to the fact that we quantify only over natural numbers,
not over sets of naturals, or sets of sets of naturals, etc.)

Turing reducibility and $1$-reducibility are ways of comparing
the complexity of subsets $A,B\subseteq\omega$.  We refer
the reader to \cite{S87} for the definition of Turing reducibility,
and write $A\leq_T B$ to denote that under this reducibility,
$A$ is no more complex than $B$.  $1$-Reducibility is more simply defined.
\begin{definition}
\label{definition:1reducible}
A set $A$ is \emph{$1$-reducible} to a set $B$, written
$A\leq_1 B$, if there exists a computable injective function $f$,
whose domain is all of $\omega$, such that
$$ (\forall x) [x\in A\iff f(x)\in B].$$
\end{definition}
It is well known that, for every $n\in\omega$, there exists a set
$S$ which is \emph{$\Sigma^0_{n+1}$-complete}:  $S$ itself is
$\Sigma^0_{n+1}$, and every $\Sigma^0_{n+1}$ set $T$ has $T\leq_1
S$. Indeed, the Halting Problem, written here as $\emptyset'$, is
$\Sigma^0_1$-complete.  The set $\emptyset''$ is the halting problem
relative to $\emptyset'$, and is $\Sigma^0_2$-complete, and one
iterates this \emph{jump} operation, always taking the halting
problem relative to the previous set, to get the
$\Sigma^0_{n+1}$-complete set $\emptyset'''^{\cdots}$, or
$\emptyset^{(n+1)}$. Likewise, the complement of a
$\Sigma^0_{n+1}$-complete set $S$ is $\Pi^0_{n+1}$-complete. This is
regarded as an exact assessment of the complexity of $S$; among
other things, $\Sigma^0_{n+1}$-completeness ensures
that $S$ is \emph{not} $\Pi^0_{n+1}$, nor
$\Sigma^0_{n}$.  

Note that the class of $\Sigma^0_0$
sets and the class of $\Pi^0_0$ sets coincide:  these are the
computable sets.  For $n>0$, a set which is both $\Sigma^0_n$ and
$\Pi^0_n$ is said to be $\Delta^0_n$. 
The $\Delta^0_{n+1}$ sets are exactly those which are
Turing-reducible to a $\Sigma^0_n$-complete oracle set.  As a
canonical $\Sigma^0_n$-complete set, we usually use
$\emptyset^{(n)}$, the $n$-th jump of the empty set, as defined in
\cite{S87}.

Turing reducibility $\leq_T$ is a partial pre-order on the power set
$\mathcal{P}(\omega)$.  We define $A\equiv_T B$, saying that $A$ and
$B$ are \emph{Turing-equivalent}, if $A\leq_T B$ and $B\leq_T A$.
The equivalence classes under this relation form the \emph{Turing
degrees}, and are partially (but not linearly) ordered by $\leq_T$. In fact, they form
an upper semi-lattice under $\leq_T$, with least element $\bfz$, the
degree of the computable sets, but no greatest element. One often
speaks of a set $A$ as being \emph{computable in a Turing degree
$\bfd$}, meaning that for some (equivalently, for every) $B\in\bfd$
we have $A\leq_T B$.

\section{Background on Fields}
\label{sec:fields}

The next definition arises from the standard notion of a computable structure.
To avoid confusion, we use the domain $\{ x_0,x_1,\ldots\}$
in place of $\omega$.
\begin{definition}
\label{definition:computablefd}
A \emph{computable field} $F$ consists of a set $\set{x_i}{i\in I}$,
where $I$ is an initial segment of $\omega$, such that these elements
form a field with the operations given by Turing-computable functions
$f$ and $g$:
$$  x_i + x_j = x_{f(i,j)}~~~~~~~x_i\cdot x_j = x_{g(i,j)}.$$
\end{definition}

Fr\"ohlich and Shepherdson were the first to consider computable
algebraically closed fields, in \cite{FS56}.  However, the definitive
result on the effectiveness of algebraic closure is Rabin's Theorem.
To state it, we need the natural notions of the root set and the splitting set.
\begin{definition}
\label{definition:RFSF}
Let $F$ be any computable field.  The \emph{root set} $R_F$ of $F$
is the set of all polynomials in $F[X]$ having roots in $F$, and the
\emph{splitting set} $S_F$ is the set of all polynomials in $F[X]$
which are reducible there.  That is,
\begin{align*}
R_F &= \set{p(X)\in F[X]}{(\exists a\in F)~p(a)=0}\\
S_F &= \set{p(X)\in F[X]}{(\exists\text{~nonconstant~}
p_0,p_1\in F[X])~p=p_0\cdot p_1}.
\end{align*}
If $S_F$ is computable, $F$ is said to have a \emph{splitting algorithm}.
\end{definition}
With $F$ computable, $R_F$ and $S_F$ must both be computably enumerable,
being defined by existential conditions.  Theorem \ref{theorem:Rabin}
will show them to be Turing-equivalent.
For most computable fields one meets, both are computable;
the first steps in this direction were taken by
Kronecker in 1882.
\begin{theorem}[{Kronecker's Theorem; see \cite{K1882}}]\hspace{0.1in}
\label{theorem:Kronecker}
\begin{enumerate}
\item[(i)]
$\Q$ has a splitting algorithm.
\item[(ii)]
Let $L$ be a c.e.\ subfield of a computable field $F$.
If $L$ has a splitting algorithm,
then for every $x\in F$ algebraic over $L$,
$L(x)$ also has a splitting algorithm.
The specific decision procedure for $S_{L(x)}$
can be determined from that for $S_L$ and from
the minimal polynomial of $x$ over $L$.
\item[(iii)]
Let $L$ be a c.e.\ subfield of a computable field $F$.
If $L$ has a splitting algorithm,
then for any $x\in F$ transcendental over $L$,
$L(x)$ also has a splitting algorithm, which can be determined
just from that for $L$, given that $x$ is transcendental.
\end{enumerate}
More generally, for any c.e.\ subfield $L$ of a computable
field $F$ and any $x\in F$,
the splitting set of $L(x)$ is Turing-equivalent to
the splitting set for $L$, via reductions uniform in
$x$ and in the minimal polynomial of $x$ over $L$
(or in the knowledge that $x$ is transcendental).
\end{theorem}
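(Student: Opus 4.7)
The plan is to give one algorithm per part and then derive the general Turing-equivalence statement from them. Parts (i) and (iii) share a common specialization-and-interpolation strategy, while part (ii) requires the more delicate norm-and-shift method.

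For part (i), given $p(X)\in\Q[X]$, I clear denominators to reduce (by Gauss's Lemma) to testing irreducibility of a primitive polynomial in $\mathbb{Z}[X]$. For each candidate factor degree $d\in\{1,\ldots,\lfloor\Deg(p)/2\rfloor\}$, select $d+1$ distinct integers $a_0,\ldots,a_d$ with $p(a_i)\neq 0$ for all $i$ (after first testing linear factors $X-a_i$ directly). Any factor $f\in\mathbb{Z}[X]$ of degree $d$ satisfies $f(a_i)\mid p(a_i)$, so the tuple $(f(a_0),\ldots,f(a_d))$ has only finitely many possibilities; Lagrange interpolation produces a candidate $f\in\Q[X]$ for each, and polynomial division certifies whether $f\mid p$. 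Part (iii) runs the same idea one level up: reduce $p(X)\in L(t)[X]$ to a primitive $\tilde p(t,X)\in L[t,X]$; with $m=\Deg_t\tilde p$ and $n=\Deg_X\tilde p$, pick $m+1$ distinct $a_0,\ldots,a_m\in\Q\subseteq L$ on which the leading and trailing coefficients of $\tilde p$ (as polynomials in $t$) do not vanish; use the $S_L$ oracle and bounded enumeration over $L$-coefficients to factor each specialization $\tilde p(a_i,X)\in L[X]$ into monic irreducibles; for each target $X$-degree $d$ and each tuple of subproducts of total degree $d$, interpolate in $t$ to obtain a candidate $f(t,X)\in L[t,X]$ and test $f\mid\tilde p$ in $L[t,X]$. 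Any genuine factor appears as such a candidate.

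For part (ii), the norm-and-shift technique applies. Represent $p(X)\in L(x)[X]$ as $P(X,x)$ for some $P\in L[Y,X]$ with $\Deg_Y P<n:=\Deg\mu$, where $\mu\in L[Y]$ is the given minimal polynomial of $x$. After reducing to the squarefree case via $\gcd(p,p')$ in $L(x)[X]$, work with the norm $N_p(X):=\mathrm{Res}_Y(\mu(Y),P(X,Y))\in L[X]$ and its shifted versions $N_{p_k}$ for $p_k(X):=p(X-kx)$. A primitive-element argument shows that for all but finitely many $k\in\mathbb{Z}$, $N_{p_k}$ is squarefree and its irreducible factorization in $L[X]$ is in bijection with that of $p_k$ in $L(x)[X]$ (each $L(x)$-factor being recovered as $\gcd(p_k,N_i)$); hence $p$ is irreducible iff $N_{p_k}$ is irreducible in $L[X]$ for the first good $k$. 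The algorithm iterates $k=0,1,2,\ldots$, using polynomial $\gcd$ over $L$ to detect squarefreeness of $N_{p_k}$ and the $S_L$ oracle to decide its irreducibility.

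The general Turing-equivalence statement needs both directions. The direction $S_{L(x)}\leq_T S_L$ follows uniformly from the algorithms for (ii) and (iii). For the reverse $S_L\leq_T S_{L(x)}$: if $x$ is transcendental, a polynomial in $L[X]$ is irreducible over $L$ iff it is irreducible over $L(t)$ (since $L$ is algebraically closed in $L(t)$, by Gauss's Lemma in $L[t][X]$), giving a trivial identity reduction. If $x$ is algebraic, use the $S_{L(x)}$ oracle together with bounded enumeration in $L(x)$ (represented in the $L$-basis $1,x,\ldots,x^{n-1}$) to fully factor $p\in L[X]\subset L(x)[X]$ as $c\cdot q_1\cdots q_s$, then check whether some proper nonempty subproduct $\prod_{i\in S}q_i$ lies in $L[X]$; this is decidable and holds iff $p$ factors nontrivially over $L$. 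The main obstacle is part (ii): justifying the primitive-element correspondence between the irreducible factorizations of $N_{p_k}$ and $p_k$, showing that infinitely many $k\in\mathbb{Z}$ satisfy the required genericity, and verifying effectivity of the shift-and-norm search. The remaining pieces reduce to bounded search, standard polynomial arithmetic, and the reductions described.
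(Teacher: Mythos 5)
The paper does not actually prove this theorem: it is quoted as classical background, cited to Kronecker \cite{K1882} (see also \cite{E84}), so there is no in-paper argument to compare yours against. Judged on its own, your overall plan is the standard classical one --- Kronecker's interpolation method for (i) and (iii), the norm/shift (Trager-style) method for (ii), and the two easy reverse reductions for the final Turing-equivalence --- and parts (i), (ii) and the equivalence argument are essentially sound in characteristic $0$ (note, though, that the theorem as stated allows arbitrary computable fields: your use of $\Q\subseteq L$ in (iii), of integer shifts $k\in\mathbb{Z}$ in (ii), and of separability implicitly in the norm correspondence all presuppose characteristic $0$, which is all the paper needs but is narrower than the statement).

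There is, however, a genuine gap in your part (iii): the finiteness argument that powers part (i) does not lift verbatim. Over $\mathbb{Z}$, the values $f(a_i)$ of a factor range over the (genuinely finite) set of divisors of $p(a_i)$; over $L$, the divisors of $\tilde p(a_i,X)$ in $L[X]$ are finite only up to the infinite unit group $L^{*}$, and your procedure interpolates only \emph{monic} subproducts of the specializations. A factor $f(t,X)$ whose leading coefficient in $X$ is a non-constant polynomial in $t$ has specializations $f(a_i,X)=c_i h_i(X)$ with varying scalars $c_i$, so no scalar multiple of $f$ is recovered by interpolating the $h_i$. Concretely, for $\tilde p(t,X)=(tX+1)(tX+t+1)$ (which is primitive in $L[t][X]$), every proper factor has leading coefficient $t$, the monic parts of the specializations are $X+1/a_i$ and $X+(a_i+1)/a_i$, and no interpolated candidate divides $\tilde p$; your algorithm would wrongly declare $\tilde p$ irreducible. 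The repair is standard but must be said: either first reduce to the monic case by replacing $\tilde p(t,X)$ with $\ell(t)^{n-1}\,\tilde p\bigl(t,X/\ell(t)\bigr)$ where $\ell$ is the leading coefficient in $X$, or enumerate candidate leading coefficients $\ell_f(t)$ among the finitely many monic divisors of $\ell(t)$ in $L[t]$ (computable from $S_L$) and interpolate the scaled data $\ell_f(a_i)h_i(X)$ instead of the monic subproducts alone. With that normalization, ``any genuine factor appears as a candidate'' becomes true and the rest of your argument goes through.
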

The algorithms for algebraic and transcendental
extensions are different, so it is essential
to know whether $x$ is algebraic.  If it is, then from
$S_L$ one can determine the minimal polynomial of $x$.
This yields the following.
\begin{lemma}
\label{lemma:splitalg}
For every computable field $F$ algebraic over
its prime subfield $P$, there is a computable function
which accepts as input any finite tuple
$\xvec=\la x_1,\ldots,x_n\ra$ of elements of $F$ and outputs
an algorithm for computing the splitting set for
the subfield $P[\xvec]$ of $F$.  (We therefore say
that the splitting set of $P[\xvec]$ is
\emph{computable uniformly in $\xvec$}.)
\end{lemma}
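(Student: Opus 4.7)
The plan is to proceed by induction on $n$, invoking Theorem \ref{theorem:Kronecker}(i) for the base case and Theorem \ref{theorem:Kronecker}(ii) for the inductive step. For $n=0$, the prime field $P$ has a splitting algorithm by Theorem \ref{theorem:Kronecker}(i). Assume inductively that, uniformly in the tuple $\la x_1,\ldots,x_k\ra$, we have produced an index for a splitting algorithm for $L_k:=P[x_1,\ldots,x_k]$ (which equals $P(x_1,\ldots,x_k)$, since each $x_i$ is algebraic over $P$). I then want to apply Theorem \ref{theorem:Kronecker}(ii) to extend to $L_{k+1}=L_k[x_{k+1}]$, and the only missing ingredient is the minimal polynomial of $x_{k+1}$ over $L_k$.

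To produce that minimal polynomial effectively, I use that $L_k$, being finitely generated by specified elements of $F$, is a computably enumerable subset of the computable field $F$; hence polynomials in $L_k[X]$ can be enumerated, and each one can be evaluated at $x_{k+1}$ inside $F$ using the given Turing-computable field operations, with $p(x_{k+1})=0_F$ decidable. Since $x_{k+1}\in F$ is algebraic over $P$, it is algebraic over $L_k$, so some polynomial in $L_k[X]$ eventually appears in the enumeration and vanishes at $x_{k+1}$. Take the first such polynomial $p$, factor it into irreducibles using the splitting algorithm for $L_k$ supplied by the inductive hypothesis, evaluate each irreducible factor at $x_{k+1}$, and output (up to normalization) the unique factor that vanishes. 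This is the minimal polynomial of $x_{k+1}$ over $L_k$, and feeding it, together with the splitting algorithm for $L_k$, into the procedure of Theorem \ref{theorem:Kronecker}(ii) yields a splitting algorithm for $L_{k+1}$.

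The only thing that really needs checking is uniformity in $\vec{x}$: at each stage the enumeration of $L_k$, the evaluation at $x_{k+1}$, the factorization, and the selection of the vanishing irreducible factor are all carried out uniformly in the indices of $\la x_1,\ldots,x_k\ra$ and $x_{k+1}$, and Theorem \ref{theorem:Kronecker}(ii) provides the new splitting algorithm uniformly from its two inputs. Composing these $n$ uniform steps gives a single computable function that takes $\vec{x}$ to an index for a splitting algorithm for $P[\vec{x}]$, as required. I do not expect a substantive obstacle here; the proof is essentially a careful packaging of the uniform versions of Kronecker's Theorem together with the routine ``enumerate and factor'' search for minimal polynomials of algebraic elements of $F$ over c.e.\ subfields.
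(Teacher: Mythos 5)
Your proposal is correct and follows essentially the same route as the paper's own proof: induction on the length of the tuple, obtaining the minimal polynomial of $x_{k+1}$ over $P[x_1,\ldots,x_k]$ by an enumerate--evaluate--factor search (possible because $F$ is algebraic over $P$), and then invoking the uniform part (ii) of Theorem \ref{theorem:Kronecker}. The only detail the paper spells out that you gloss over is the base case: in positive characteristic $P$ is finite (so splitting is trivially decidable), and in characteristic $0$ one first computes the isomorphism of $P$ with the standard computable presentation of $\Q$ in order to transfer Kronecker's algorithm to $P$ as a subfield of $F$ --- a routine point, not a gap.
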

\begin{proof}
Clearly there are splitting algorithms for all
finite fields, just by checking all possible factorizations.
(So in fact there is a single algorithm which works
in all positive characteristics.)  In characteristic $0$,
one can readily compute the unique isomorphism
onto the prime subfield $P$ of $F$ from the
computable presentation of $\Q$ for which
Kronecker's splitting algorithm works, and this computable
isomorphism allows us to compute the splitting set of $P$.

The lemma then follows by induction on the size of the tuple
$\xvec=\la x_1,\ldots,x_n\ra$,
using part (ii) of Theorem \ref{theorem:Kronecker}.
Since our $F$ is algebraic over $P$,
we may simply search for a polynomial $p(X)$
with root $x_n$ and coefficients in $P[x_0,\ldots,x_{n-1}]$,
and then factor it, using the splitting algorithm for
$P[x_0,\ldots,x_{n-1}]$ (by inductive hypothesis),
until we have found the minimal polynomial of $x_n$
over $P[x_0,\ldots,x_{n-1}]$.
\end{proof}

At the other extreme, if $F$ is algebraically closed, then clearly both $R_F$
and $S_F$ are computable.  However, there are many computable fields $F$
for which neither $R_F$ nor $S_F$ is computable; see the expository article
\cite[Lemma 7]{Notices08} for a simple example.  Fr\"ohlich and Shepherdson
showed that $R_F$ is computable if and only if $S_F$ is, and Rabin's Theorem
then related them both to a third natural c.e.\ set related to $F$, namely its image
inside its algebraic closure.  (Rabin's work actually ignored Turing degrees,
and focused on $S_F$ rather than $R_F$, but the theorem stated here
follows readily from his proof there.)  More recent works \cite{M09a,S10}
have compared these three sets under stronger reducibilities,
but here, following Rabin, we consider only Turing reducibility,
denoted by $\leq_T$, and Turing equivalence $\equiv_T$.

\begin{theorem}[{Rabin's Theorem; see \cite{R60}}]
\label{theorem:Rabin}
For every computable field $F$, there exist an algebraically closed
computable field $E$ and a computable field homomorphism $g:F\to E$
such that $E$ is algebraic over the image $g(F)$.  Moreover, for every
embedding $g$ satisfying these conditions, the image $g(F)$
is Turing-equivalent to both the root set $R_F$ and the splitting set $S_F$
of the field $F$.
\end{theorem}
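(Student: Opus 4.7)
The plan is to prove both halves of Rabin's Theorem separately: first the existence of a Rabin embedding, then the Turing-equivalences for an arbitrary one.

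For existence, I would build $E$ and $g$ by a stagewise construction. Enumerate $F = \{a_0, a_1, \ldots\}$ effectively and reserve two disjoint infinite computable sets of symbols, one to house $g(F)$ and another to hold fresh algebraic elements. At each stage $s$ I maintain a finite algebraic extension of $g(F)$ of the form $(g(P))[\xvec_s]$, where $P$ is the prime subfield and $\xvec_s$ is a finite tuple of already-adjoined elements; by Lemma \ref{lemma:splitalg}, this extension carries a splitting algorithm computed uniformly from the tuple and the minimal polynomials recorded so far. At stage $s$ I pick a new polynomial over the current extension, factor it into irreducibles using that algorithm, and for each irreducible factor of degree $\geq 2$ introduce a fresh symbol as a new root, extending the $+$ and $\cdot$ tables by the standard relations modulo the minimal polynomial. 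A dovetailing ensures that every polynomial over $g(F)$, and every polynomial arising later in the tower, eventually has all its roots adjoined. The union $E$ is then a computable algebraically closed field, algebraic over $g(F)$ by construction.

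For the equivalences, fix any Rabin embedding $g$ and write $g(p)$ for the image of $p \in F[X]$ under coefficientwise application of $g$. It suffices to verify four reductions. For $g(F) \leq_T S_F$: given $x \in E$, search $F[X]$ for any $p$ with $g(p)(x) = 0$ (existence is guaranteed by algebraicity); use $S_F$ to factor $p$ into irreducibles and locate the unique irreducible factor $q$ with $g(q)(x) = 0$; then $x \in g(F)$ iff $\deg q = 1$. For $g(F) \leq_T R_F$: again find $p$ with $g(p)(x) = 0$, then use $R_F$ to extract all roots of $p$ in $F$ (test for a root, enumerate $F$ to find one, divide out, recurse); $x \in g(F)$ iff the $g$-image of one of these roots equals $x$ in $E$. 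For $R_F \leq_T g(F)$: enumerate $E$ to locate all roots of $g(p)$ there, and query the oracle on each. For $S_F \leq_T g(F)$: first replace $p$ by $p/\gcd(p,p')$ to pass to the squarefree case (if the gcd is nontrivial, $p$ is already reducible), then enumerate $E$ until all $n = \deg p$ distinct roots $\alpha_1, \ldots, \alpha_n$ of $g(p)$ are known; $p$ is reducible in $F[X]$ iff some nonempty proper subset $S \subseteq \{1, \ldots, n\}$ has every coefficient of $\prod_{i \in S}(X - \alpha_i)$ in $g(F)$, a property testable by the oracle.

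The main obstacle lies in the existence construction, where one must simultaneously enforce algebraic closure of $E$, algebraicity of $E$ over $g(F)$, and computability of $E$'s field operations, while working only through finite approximations. The key tool is the uniformity in Lemma \ref{lemma:splitalg}: at every stage, the current finite algebraic extension $(g(P))[\xvec_s]$ has a splitting algorithm computable from $\xvec_s$ and the minimal polynomials, which is exactly what lets the factoring step stay effective. Once this invariant is maintained and the polynomial enumeration is properly dovetailed, the rest of the construction is routine bookkeeping.
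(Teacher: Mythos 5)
The paper itself gives no proof of this theorem---it is quoted from \cite{R60}---so your proposal has to stand on its own. The second half mostly does: the four reductions $g(F)\leq_T S_F$, $g(F)\leq_T R_F$, $R_F\leq_T g(F)$, and $S_F\leq_T g(F)$ are the standard arguments and are essentially correct, up to one wrinkle in positive characteristic: your claim that a nontrivial $\gcd(p,p')$ already makes $p$ reducible fails for inseparable irreducible polynomials such as $X^{\ell}-t$ over $\F_{\ell}(t)$ in characteristic $\ell$, where $p'=0$. The usual fix is not to pass to the squarefree part but to list the roots of $g(p)$ in $E$ with multiplicities and test every proper nontrivial monic product of the corresponding linear factors for having all coefficients in $g(F)$.

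The genuine gap is in the existence half. You factor, at each stage, over $(g(P))[\vec{x}_s]$ by appealing to Lemma \ref{lemma:splitalg}; but that lemma applies only to fields algebraic over the prime subfield, with minimal polynomials in hand. A general computable $F$ may contain elements transcendental over $P$ (and one cannot decide which elements those are), and even for $F$ algebraic over $P$ you cannot confine the factoring to $g(P)[\vec{x}_s]$: to make $E$ algebraically closed and algebraic over $g(F)$ you must adjoin roots of polynomials whose coefficients are arbitrary elements of $g(F)$, and writing down consistent, computable addition and multiplication tables for such a root requires knowing how that polynomial factors over the field generated by $g(F)$ and the previously adjoined roots---that is, it requires (a relativized form of) the splitting set $S_F$, which is exactly what is unavailable; indeed the theorem's second half shows $S_F\equiv_T g(F)$, so any construction that decides these factorizations along the way would force $g(F)$, hence $S_F$, to be computable, which is false in general. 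You also never say how $g$ itself is defined, i.e., how equalities between $g(a_n)$ and expressions in already-adjoined roots are decided. Rabin's construction avoids factoring altogether: one builds $E$ (for instance as a quotient of $F[x_1,x_2,\ldots]$, or by committing to its atomic diagram stage by stage) using only the decidability of consistency of finite systems of polynomial equations and inequations over $F$ with the theory of algebraically closed fields, which needs just the computable field operations of $F$, never its splitting set. As written, your stagewise construction works only for $F$ algebraic over its prime subfield, and the ``routine bookkeeping'' conceals precisely the step in which the content of the theorem lies.
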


We will refer to any embedding $g:F\to E$ satisfying the conditions
from Rabin's Theorem
as a \emph{Rabin embedding} of $F$.  Since this implicitly includes
the presentation of $E$ (which is required by the
conditions to be algebraically closed),
a Rabin embedding is essentially a presentation of the algebraic
closure of $F$, with $F$ as a specific, but perhaps undecidable, subfield.

When we begin to consider polynomials in several variables,
the connection between $R_F$ and $S_F$ breaks down.
\begin{theorem}[{see \cite{FJ86}}]
\label{theorem:FandJ}
Suppose that $F$ is a computable field.  Then for every $n$,
the set of irreducible polynomials in the ring $F[ X_0,\ldots,X_n]$
is computable in an oracle for the splitting set $S_F$,
via a Turing reduction uniform in $n$.

Hence, in a computable differential field $K$,
it is decidable in $S_K$ whether a differential polynomial
$p(Y)\in K\{ Y\}$, viewed as an algebraic polynomial
over $K$ in $Y,\del Y,\del^2Y,\ldots$, is irreducible.
(Differential polynomials are described in the next section.)
\end{theorem}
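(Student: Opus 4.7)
The second statement is immediate from the first: any differential polynomial $p(Y)\in K\{Y\}$ mentions only finitely many derivatives $Y,\del Y,\ldots,\del^r Y$, and so sits in $K[Y,\del Y,\ldots,\del^r Y]$; apply the first statement with $n=r$ and oracle $S_K$.

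For the first statement, the plan is Kronecker substitution. Let $p(X_0,\ldots,X_n)\in F[X_0,\ldots,X_n]$ have total degree $D$, and put $d=D+1$. The substitution $X_i\mapsto T^{d^i}$ sends $p$ to a univariate polynomial $p^\ast(T)\in F[T]$. It is injective on polynomials of total degree at most $D$, because the base-$d$ expansion of the $T$-exponent of an image monomial returns the original multi-exponent: each component lies in $\{0,\ldots,D\}\subseteq\{0,\ldots,d-1\}$. Hence any factorization $p=fg$ with $f,g$ nonconstant (both of total degree $\leq D$) lifts to $p^\ast=f^\ast g^\ast$ in $F[T]$.

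The algorithm, relative to the oracle $S_F$, runs as follows. First, factor $p^\ast=c\prod_i q_i^{e_i}$ completely into monic irreducibles in $F[T]$: reducibility of any polynomial in $F[T]$ is decided by $S_F$, and once a polynomial is known to be reducible, a brute-force search over pairs of polynomials of smaller degree must produce an explicit nontrivial factorization; iterating yields the full decomposition into irreducibles. Second, enumerate the finitely many tuples $(a_i)$ with $0\leq a_i\leq e_i$ (excluding both extremes), form $h_1=\prod q_i^{a_i}$ and $h_2=\prod q_i^{e_i-a_i}$, and check that every $T$-exponent appearing in $h_1$ or $h_2$ has base-$d$ digits summing to at most $D$. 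If so, read those digits as multi-indices to recover candidate multivariate polynomials $\tilde h_1,\tilde h_2$, and test whether $\tilde h_1\cdot\tilde h_2$ agrees with $p$ up to a nonzero scalar in $F$. Declare $p$ reducible if some tuple succeeds, otherwise irreducible.

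Correctness is forced by unique factorization in $F[T]$ together with the injectivity of Kronecker substitution on polynomials of total degree $\leq D$: every genuine factorization $p=fg$ pushes to some $p^\ast=f^\ast g^\ast$, and then $f^\ast$ matches one of the enumerated tuples up to a scalar in $F$. The main point of care is bookkeeping with constant factors, absorbed into the single final scalar test; this is the one place where a misstep could easily introduce either a false positive or a false negative. Uniformity in $n$ is clear, since $n$ and $D$ are read off the input and the remainder of the procedure has no further $n$-dependence beyond these parameters.
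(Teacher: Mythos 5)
Your proposal is correct. Note that the paper gives no proof of this statement at all: it simply cites Fried--Jarden \cite{FJ86}, \S 19, and the argument underlying that citation is the same classical Kronecker reduction of multivariate to univariate factorization that you carry out, so your write-up is essentially a self-contained version of the intended proof. Your soundness and completeness checks are right: a successful tuple genuinely exhibits $p$ as a product of two nonconstant polynomials (no cancellation can occur when reading back base-$d$ digits, since distinct exponents give distinct monomials), and any genuine factorization $p=fg$ has both factors of total degree at most $D$, so it is found up to the scalar you absorb at the end. The only omission is the degenerate case of constant $p$ (where $d=D+1=1$ makes the digit expansion meaningless), which should be dispatched separately as trivial; this is a cosmetic point, not a gap.
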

For a proof, see \cite[\S 19]{FJ86}.  In contrast,
the decidability of the existence of rational solutions to arbitrary
polynomials in $\Q[X_1,X_2,\ldots ]$ remains an open question:
this is \emph{Hilbert's Tenth Problem for $\Q$}, the subject
of much study.  At present, it is not clear that these questions
impinge on single-variable problems for differential fields,
but since a differential polynomial can be viewed as an algebraic
polynomial in several variables, it is not implausible that
a connection might exist.

We now turn to questions of algebraic dependence in fields.
Predictably, these issues are closely tied to transcendence bases.
\begin{definition}
\label{definition:dependence}
For a computable field $F$ with computably enumerable subfield $E$,
the \emph{algebraic dependence set} $A_{F/E}$ is the set of all
finite tuples of $F$ algebraically dependent over $E$:
$$ A_{F/E} =\set{(x_1,\ldots,x_n)\in F^{<\omega}}{
(\exists\text{~nonzero~}p\in E[X_1,\ldots,X_n])~p(\xvec)=0}.$$
Likewise, for any computable $F$-vector space $V$ (including computable
field extensions of $F$), the \emph{linear dependence set} is
$$ L_V =\set{\{ v_1,\ldots,v_n\}\subseteq V}{
(\exists \la c_1,\ldots,c_n\ra\in F^n-\{\la 0,\ldots,0\ra\})~\sum\nolimits_{i\leq n} a_iv_i=0}.$$
\end{definition}

Below, when considering a differential field $K$ within its differential
closure $\Khat$, we will often want to consider this set for $\Khat$ over $K$,
and we will write
$$ D_K = A_{\Khat/K} $$
for the set of all finite subsets of $\Khat$ algebraically dependent over $K$.
The following lemma is considered in more depth in \cite{MM13}.

\begin{lemma}
\label{lemma:algdep}
For every computable field $F$ and
every computably enumerable subfield $E$, there is a transcendence
basis $B_{F/E}$ for $F$ over $E$ computable using the set $A_{F/E}$ as an oracle.
Conversely, for every transcendence basis $B$ for $F$ over $E$,
we have $A_{F/E}\leq_T B$.
\end{lemma}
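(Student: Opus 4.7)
The plan is to handle the two directions separately, each by a fairly standard computability argument. For the first, a greedy construction using $A_{F/E}$ as an oracle will suffice: enumerate the elements of $F$ as $x_0, x_1, \ldots$ via its computable presentation and maintain a tuple $B$ that starts empty; at stage $i$, query $A_{F/E}$ on the tuple obtained by appending $x_i$ to the current $B$, and append $x_i$ to $B$ exactly when the answer is negative. An induction shows that $B$ is algebraically independent over $E$ at every stage, while any rejected $x_i$ is algebraic over $E$ adjoined with the value of $B$ at its stage, and hence also algebraic over $E(B)$ with $B$ the limit. Thus the limit $B$ is a maximal algebraically independent subset of $F$ over $E$, i.e.\ a transcendence basis.

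For the second direction, the key observation is that $A_{F/E}$ is already c.e.\ (its definition is existential), so it will suffice to produce, using $B$ as an oracle, a c.e.\ certificate for algebraic \emph{independence}; running both searches in parallel then yields a decision procedure. Concretely, procedure~(a) enumerates nonzero polynomials in $E[X_1,\ldots,X_n]$ (using that $E$ is c.e.), evaluates each in the computable field $F$, and halts with ``dependent'' the moment one vanishes at $(x_1,\ldots,x_n)$. Procedure~(b), with $B$ as its oracle, enumerates pairs of finite sets $B_1 \subseteq B_0 \subseteq B$ satisfying $|B_0| - |B_1| = n$ and, for each pair, searches for polynomials over $E$ witnessing that every $b \in B_0 \setminus B_1$ is algebraic over $E(x_1,\ldots,x_n,B_1)$; it halts with ``independent'' once all such witnesses are found. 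Both searches are $\Sigma^0_1$ relative to $B$, and I claim that exactly one of them halts on any input.

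The main point to verify will be correctness and completeness of procedure~(b). If procedure~(b) halts for some $B_0, B_1$, then $E(B_0) \subseteq \overline{E(x_1,\ldots,x_n,B_1)}$, so $|B_0| = \operatorname{trdeg}(E(B_0)/E) \leq n + |B_1| = |B_0|$, which forces equality and hence forces $\{x_1,\ldots,x_n\} \cup B_1$ to be algebraically independent over $E$; in particular $(x_1,\ldots,x_n)$ is independent. Conversely, suppose $(x_1,\ldots,x_n)$ is algebraically independent over $E$. Since $F$ is algebraic over $E(B)$, each $x_i$ is algebraic over a finite subset of $B$, and taking the union of such witnesses yields a finite $B_0 \subseteq B$ such that $E(x_1,\ldots,x_n,B_0)$ is algebraic over $E(B_0)$, so $B_0$ is a transcendence basis of that field over $E$. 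Applying the Steinitz exchange principle to the algebraically independent set $\{x_1,\ldots,x_n\}$ inside $B_0$ produces $B_1 \subseteq B_0$ with $|B_1| = |B_0| - n$ for which $\{x_1,\ldots,x_n\} \cup B_1$ is itself a transcendence basis of that field; every $b \in B_0 \setminus B_1$ is then algebraic over $E(x_1,\ldots,x_n,B_1)$, and procedure~(b) will eventually find polynomial witnesses. Together, the two procedures decide $A_{F/E}$ relative to $B$, giving $A_{F/E} \leq_T B$.
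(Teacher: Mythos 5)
Your proof is correct and takes essentially the same route as the paper: the first direction is the same greedy construction of a basis computable from $A_{F/E}$, and the converse pairs the unrelativized $\Sigma^0_1$ search for a dependence with a $B$-c.e.\ exchange-style certificate of independence, deciding $A_{F/E}$ by running both in parallel. The only cosmetic difference is that your certificate completes $\{x_1,\ldots,x_n\}$ with elements $B_1\subseteq B_0\subseteq B$ and a transcendence-degree count, whereas the paper uses auxiliary elements $y_{n+1},\ldots,y_r\in F$ making the two sets interalgebraic (and the paper separately, though not essentially, treats the finite-transcendence-degree case).
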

\begin{proof}
We define the \emph{canonical transcendence basis} $B_{F/E}$ for $F$
over $E$ as $\cup_s B_s$, where $B_0=\emptyset$ and
$$ B_{s+1}=\left\{\begin{array}{cl} B_s\cup\{ x_s\}, & \text{if
this set is algebraically independent over $E$,}\\
B_s, & \text{if not.}\end{array}\right.$$
Clearly $B_{F/E}\leq_T A_{F/E}$ (and so, by the next paragraph,
$B_{F/E}\equiv_T A_{F/E}$).

If $F$ has finite transcendence degree over $E$,
then every transcendence basis is computable,
with no oracle at all.  So we assume the transcendence degree to be infinite.
If $B$ is a transcendence basis and $$S=\{ x_1,\ldots,x_n\}\subseteq F,$$
then $S$ is algebraically independent iff there exist an $r\geq n$,
an $r$-element subset $B_0=\{ b_1,\ldots,b_r\}\subseteq B$,
and elements $y_{n+1},\ldots,y_r\in F$ such that every element
in each of the sets $B_0$ and $$\{ x_1,\ldots,x_n,y_{n+1},\ldots,y_r\}$$
is algebraic over the other set.  This condition is $\Sigma^0_1$ relative to $B$.
Of course, algebraic dependence of $S$ is $\Sigma^0_1$ (without any oracle),
so membership of $S$ in $A_{F/E}$ is decidable from $B$.
\end{proof}
An exactly analogous proof likewise holds for bases of vector spaces.
\begin{lemma}
\label{lemma:lindep} For every computable field $F$ and every
computable $F$-vector space $V$ (such as a computable field
extension of $F$), the relation $L_V$ of linear dependence
is computable relative to any basis for $V$
over $F$.  Conversely, this relation itself computes a basis.
\end{lemma}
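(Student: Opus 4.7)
The plan is to follow the proof of Lemma \ref{lemma:algdep} \emph{mutatis mutandis}. For the converse direction, I define a canonical basis $B_V = \bigcup_s B_s$ by $B_0 = \emptyset$ and, with $v_0, v_1, \ldots$ a computable enumeration of $V$,
\[
B_{s+1} = \begin{cases} B_s \cup \{v_s\}, & \text{if } B_s \cup \{v_s\} \text{ is } F\text{-linearly independent,} \\ B_s, & \text{otherwise.} \end{cases}
\]
Each step requires a single query to $L_V$, so $B_V \leq_T L_V$, and $B_V$ is a basis by its maximality.

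For the forward direction, fix any basis $B$ of $V$ as oracle. If $\dim_F V$ is finite then $B$ is finite, hence computable; every $v \in V$ has computable coordinates with respect to $B$, so linear dependence of any finite tuple reduces to a decidable rank computation over $F$. So assume $\dim_F V = \infty$. Given $S = \{v_1, \ldots, v_n\} \subseteq V$, linear dependence of $S$ is $\Sigma^0_1$ without any oracle: search for $(c_1, \ldots, c_n) \in F^n$, not all zero, with $\sum_i c_i v_i = 0$. For linear \emph{independence}, the key observation is that the condition is $\Sigma^0_1$ relative to $B$, via the criterion that there exist $b_1, \ldots, b_m \in B$ and coefficients $(c_{ij}) \in F^{n \times m}$ with $v_i = \sum_j c_{ij} b_j$ for each $i$ and $\mathrm{rank}(c_{ij}) = n$. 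Each equation $v_i = \sum_j c_{ij} b_j$ is checkable in the computable space $V$, and the rank condition is decidable in $F$; membership of the $b_j$ in $B$ is decidable from the oracle; so the whole formula is $\Sigma^0_1$ in $B$. Running the two searches in parallel decides $L_V$ from $B$.

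The equivalence underlying the criterion is immediate: since $B$ is a basis, each $v_i$ has a unique representation $v_i = \sum_j c_{ij} b_j$ over some common finite set $\{b_1, \ldots, b_m\} \subseteq B$, and since $b_1, \ldots, b_m$ are themselves linearly independent, $\sum_i \alpha_i v_i = 0$ is equivalent to $\sum_i \alpha_i c_{ij} = 0$ for every $j$. Thus $\{v_1, \ldots, v_n\}$ is linearly independent precisely when the rows of $(c_{ij})$ are linearly independent, i.e.\ when $\mathrm{rank}(c_{ij}) = n$. I anticipate no substantive obstacle; this is the direct vector-space analog of the transcendence-basis case, with the rank of a coefficient matrix standing in for the Steinitz-style exchange argument used there.
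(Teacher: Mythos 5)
Your proposal is correct and follows essentially the same route as the paper, which gives no separate argument for this lemma but simply declares it ``exactly analogous'' to Lemma \ref{lemma:algdep}: you build the canonical basis greedily from an $L_V$-oracle in one direction, and in the other you split into the finite- and infinite-dimensional cases and exhibit a $\Sigma^0_1$-in-$B$ criterion for independence to run in parallel with the oracle-free $\Sigma^0_1$ search for a dependence. Your coordinate-matrix rank criterion is just the vector-space incarnation of the paper's mutual-algebraicity (exchange-style) condition, so there is no substantive divergence.
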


\section{Computable Differential Fields}
\label{sec:background}

Differential fields are a generalization of fields, in which the field
elements are often viewed as functions.
The elements are not treated as functions, but the differential operator(s)
on them are modeled on the usual notion of differentiation of functions.
\begin{definition}
\label{definition:difffield}
A \emph{differential field} is a field $K$ with one or more additional
unary functions $\del_i$ satisfying the following two axioms for all
$x,y\in K$:
$$ \del_i(x+y)=\del_i x + \del_i y~~~~~~~
\del_i(x\cdot y)=(x\cdot\del_i y) + (y\cdot\del_i x).$$
The \emph{constants} of $K$ are those $x$ such that, for all $i$,
$\del_i x=0$.  They form a differential subfield $C_K$ of $K$.
\end{definition}
So every field can be made into a differential field by adjoining
the zero operator $\del x=0$.  For a more common example,
consider the field $F(X_1,\ldots,X_n)$ of rational functions over
a field $F$, with the partial derivative operators
$\del_i=\frac{\partial}{\partial X_i}$.
We will be concerned only with \emph{ordinary differential fields},
i.e., those with a single differential operator $\del$.

A differential field $K$ gives rise to a ring $\KY$ of \emph{differential polynomials}.
This can be described as $$K\big[Y,\del Y,\del^2Y,\ldots\big],$$
the ring of algebraic polynomials in the infinitely many variables shown.
However, for any differential polynomial $p$ and any single $y\in K$,
it makes sense to speak of $p(y)$, by which we mean the
element $$p\big(y,\del y,\del^2 y,\ldots\big)\in K$$ calculated using $\del$
and the field operations of $K$.  Likewise, in any differential field
extension $L$ of $K$, $p(x)$ will be an element of $L$ for every $x\in L$.

We can similarly discuss the derivative of a polynomial, bearing in
mind that the coefficients are elements of $K$, not necessarily constants,
and may require the Leibniz Rule.  For instance, if
$$p(Y) = a{\left(\del Y\right)}^2 + bY{\left(\del^2 Y\right)}$$ with $a,b\in K$, then
$$ (\del p)(Y) = (\del a)(\del Y)^2 +2a(\del Y){\left(\del^2 Y\right)}
+(\del b)Y{\left(\del^2Y\right)}+b(\del Y){\left(\del^2 Y\right)} +bY{\left(\del^3 Y\right)}.$$
The \emph{order}
of $p\in K\{ Y\}$ is the greatest $r\geq 0$ such that $\del^rY$ appears
nontrivially in $p(Y)$.  So, in the example above, $p(Y)$ has order $2$
and $\del p(Y)$ has order $3$.  The algebraic polynomials (in $K[Y]$) 
of positive degree in $Y$ have order $0$.  By convention the zero polynomial has
order $-\infty$, and all other algebraic polynomials of degree $0$ have order $-1$.

Just as polynomials in $F[X]$ are ranked by their degree,
differential polynomials in $\KY$ are ranked as well.
First, if $p,q\in\KY$ and $\ord(p)<\ord(q)$, then $p$
has lower rank than $q$.  Second, if $\ord(p)=\ord(q)=r$
but $\del^rY$ has lesser degree in $p$ than in $q$,
then $p$ has lower rank than $q$.  This is not the entire notion
of rank, but it is as much as we need
in this paper:
rank is the lexicographic
order on the pair $\big(\ord(p),\text{deg}_{\del^{\ord(p)}Y}(p)\big)$,
hence of order type $\omega^2$.

The notion of a computable differential field extends that of a computable field
in the natural way.
\begin{definition}
\label{definition:computabledifffd}
A \emph{computable differential field} is a computable field with one
or more differential operators $\del$ as in Definition \ref{definition:difffield},
each of which is likewise given by some Turing-computable function $h$
with $\del(x_i)=x_{h(i)}$ (where $\{ x_0,x_1,\ldots\}$ is again
the set of elements of the differential field, as in Definition \ref{definition:computablefd}).
\end{definition}

As we shift to consideration of differential fields, we must first
consider the analogy between algebraic closures of fields and differential
closures of differential fields.  The theory $\DCF$
of differentially closed fields $K$ of characteristic $0$
is a complete theory, and was axiomatized by Blum
(see e.g.\ \cite{B77}) using the axioms
for differential fields of characteristic $0$,
along with axioms stating that, for every pair of nonzero differential
polynomials $p,q\in K\{ Y\}$ with $\ord(p) >\ord(q)$, there exists
some $y\in K$ with $p(y)=0\neq q(y)$.  
(By convention, all nonzero constant polynomials have order $-1$.
Blum's axioms therefore include formulas saying that $K$ is
algebraically closed, by taking $q=1$ and
arbitrary nonconstant $p\in K[Y]$.)

For a differential field $K$ with extensions containing
elements $x_0$ and $x_1$, we will write $$x_0 \cong_K x_1$$
to denote that $K\la x_0\ra\cong K\la x_1\ra$ via an isomorphism
fixing $K$ pointwise and sending $x_0$ to $x_1$.
This is equivalent to the property that, for all
$h\in K\{ Y\}$, $$h(x_0)=0 \iff h(x_1)=0;$$ a model theorist
would say that $x_0$ and $x_1$ realize the same atomic type over $K$.
The same notation $x_0\cong_F x_1$ could apply to elements
of field extensions of a field $F$, for which the equivalent
property would involve only algebraic polynomials $h\in K[Y]$.

Let $K\subseteq L$ be an extension of differential fields.  An element
$z\in L$ is \emph{constrained over $K$} if $z$ satisfies some
\emph{constrained pair} over $K$, as defined here.  The terminology
of ``principal types'' reflects the model theory behind the definition.
In fact, the type generated is principal over the theory \DCF.

\begin{definition}
\label{definition:constraint}
Let $K$ be a differential field.
A pair $(p,q)$ of differential polynomials in $K\{ Y\}$
\emph{generates a principal type} if, for all differential field extensions $L_0$
and $L_1$ of $K$ and all $x_i\in L_i$ such that $p(x_i)=0\neq q(x_i)$,
we have $x_0\cong_K x_1$.
The pair $(p,q)$ is \emph{constrained} for $K$ if $p$ is
monic and algebraically irreducible over $K$, with $\ord(q)<\ord(p)$, and
$(p,q)$ generates a principal type.
Elements $x$ in an extension of $K$ with $p(x)=0\neq q(x)$
are said to \emph{satisfy the constrained pair $(p,q)$}. We let
\begin{align*}
T_K &=\set{(p,q)\in(K\{ Y\})^2}{(p,q)\text{~is not a constraint}}\\
&= \set{(p,q)}{{\left(\exists x,y\in\Khat\right)}(\exists h\in\KY)[
h(x)=0\neq h(y)~\&~x,y\text{~satisfy~}(p,q)]}.
\end{align*}
(The second of these equivalent definitions should logically follow Theorem
\ref{theorem:cclosure}, where we define the differential closure $\Khat$  of $K$,
and Proposition \ref{prop:noniso}, which establishes the equivalence.)
Thus $\TKbar$ is the \emph{constraint set} for $K$.  If $T_K$ is computable,
we say that $K$ has a \emph{constraint algorithm}.
\end{definition}

The broad intention
is to quantify over all $x$ and $y$ in all differential field extensions of $K$.
However, since the definition considers only those $x_0$ and $x_1$
satisfying the constrained pair, it turns out to be equivalent to quantify
over all $x$ and $y$ in a differential closure $\Khat$ of $K$.
Once we state Harrington's Theorem below, the quantification
will just be over elements of $\omega$, and so the second definition
of $T_K$ above uses a $\Sigma^0_1$ formula, provided that $K$ is computable.
(The first definition can also be seen directly to be $\Sigma^0_1$:  the
model-theoretic notion of generating a principal type is $\Pi^0_1$, since
the theory $\DCF\cup\Delta(K)$ described below is complete and decidable.)
This was our reason for defining $T_K$ to be the \emph{complement} of the
constraint set:  we thus parallel the notation $R_F$ and $S_F$
for the corresponding sets for fields, in that all are existential definitions.
(For this purpose we eschew the symbol $C_K$, which is already widely used
to denote the constant subfield of $K$.)

Definition \ref{definition:constraint}
parallels the definition of the splitting set $S_F$
in function if not in form.  For fields $F$, irreducible polynomials $p(X)$
have exactly the same property:  if $p(x_0)=p(x_1)=0$ (for $x_0$ and $x_1$
in any field extensions of $F$), then $x_0\cong_F x_1$ (that is,
$F(x_0)\cong F(x_1)$ via an $F$-isomorphism mapping $x_0$ to $x_1$).
So $T_K$ is indeed the analogue of $S_F$:
both are $\Sigma^0_1$ sets, given that $K$ and $F$ are both computable,
and both are the negations of the properties we need to produce
isomorphic extensions.

If $x\in L$ is constrained over $K$ by $(p,q)$, then there exists a
differential subfield of $L$, extending $K$ and containing $x$,
whose transcendence degree as a field extension of $K$ is finite.
Indeed, writing $K\la x\ra$ for the smallest differential subfield
of $L$ containing $x$ and all of $K$, we see that the transcendence
degree of $K\la x\ra$ over $K$ is the smallest order $r$ of any
nonzero element $p$ of $K\{ Y\}$ for which $x$ is a zero, and that $$\left\{ x,\del
x,\ldots,\del^{r-1}x\right\}$$ forms a transcendence basis for $K\la
x\ra$ as a field extension of $K$. The unique irreducible $p$ of smallest order
is called the \emph{minimal differential polynomial of $x$}, and its order $r$ is
called the \emph{order of $x$}.
(Fact \ref{fact:order} and Definition \ref{definition:order} will elaborate on this.
For more general results, see \cite[II.12, Theorem~6(d)]{KolchinBook1973},
and also \cite[Lemma~6.12]{Poizat} for ordinary differential fields.)
The elements of $L$ which are constrained over $K$ turn
out to form a differential field in their own right.
If this subfield is all of $L$, then $L$ itself is said to be a
\emph{constrained extension} of $K$.

An algebraic closure $\Fbar$ of a field $F$ is an algebraically
closed field which extends $F$ and is algebraic over it.  Of course,
one soon proves that this field is unique up to isomorphism over $F$
(that is, up to isomorphisms which restrict to the identity on the
common subfield $F$). On the other hand, each $F$ has many
algebraically closed extensions; the algebraic closure is just the
smallest of them.  Likewise, each differential field $K$ has many
differentially closed field extensions; a \emph{differential
closure} of $K$ is such an extension which is constrained over $K$.

Model-theoretically, the two situations are closely analogous:
the algebraic closure $\Fbar$ of $F$ (of characteristic $0$) is the prime
model of the theory $\textbf{ACF}_0\cup \Delta (F)$,
given by extending the language to include constants for all elements
of $F$ and adjoining to $\textbf{ACF}_0$ the atomic diagram
$\Delta (F)$.  Likewise, the differential closure $\Khat$ of $K$
is the prime model of $\DCF\cup\Delta (K)$.

As with fields, the differential closure $\Khat$ of $K$ turns out to be
unique up to isomorphism over $K$.  On the other hand,
$\Khat$ is generally not \emph{minimal}:
there exist differential field embeddings of $\Khat$ into itself
over $K$ whose images are proper differential subfields of $\Khat$.
This provides a first contrast between \DCF\ and \textbf{ACF}$_0$,
since the corresponding statement about algebraic closures is false.

\begin{theorem}[{\cite{KolchinConstrained,Robinson,Shelah}}]
\label{theorem:cclosure}
For every differential field $K$ of characteristic $0$,
there exists a differential field extension $\Khat\supseteq K$
which satisfies the axiom set \DCF\ and has the property
that every element of $\Khat$ satisfies some constrained pair from $\TKbar$.
We refer to $\Khat$ as the \emph{differential closure} of $K$,
since it can be shown to be unique up to isomorphisms which fix $K$ pointwise.
\end{theorem}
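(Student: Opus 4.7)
The plan is to split the claim into an existence part (build $\Khat$ concretely as a constrained, differentially closed extension of $K$) and a uniqueness part (invoke $\omega$-stability of $\DCF$).

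For existence, I would iterate a constrained-extension construction. Start with $K_0 = K$; at each successor stage, for each constrained pair $(p,q)$ over $K_\alpha$ not yet realized, adjoin a generic zero $y$ of $p$ with $q(y)\neq 0$, taking $K_{\alpha+1}$ to be the differential fraction field of $K_\alpha\{Y\}$ modulo the appropriate prime differential ideal (the one consisting of all $h\in K_\alpha\KY$ such that $h q^n$ lies in the differential ideal generated by $p$, for some $n$). The constrained hypothesis guarantees that this extension is unique up to $K_\alpha$-isomorphism, so the stage is well-defined. At limit stages take unions. Since each step preserves the property of being a constrained extension of $K$, the resulting $\Khat$ is constrained over $K$; what remains is to verify that $\Khat$ satisfies Blum's axioms. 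That is, for every $(p,q)\in\Khat\{Y\}^2$ with $\ord(p)>\ord(q)$ and $p\neq 0$, one must produce $y\in\Khat$ with $p(y)=0\neq q(y)$.

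The crux is the following subclaim, to be proved by induction on the rank of $p$: every such pair $(p,q)$ dominates a constrained pair, in the sense that there exist $p',q'\in\Khat\{Y\}$ with $p'$ monic irreducible, $\ord(p')\leq\ord(p)$, any zero $y$ of $(p',q')$ also satisfying $p(y)=0\neq q(y)$, and $(p',q')$ constrained over $\Khat$. Given this, the construction process stabilizes at a stage where every such $(p,q)$ is witnessed, and that stage is the desired $\Khat$. To prove the subclaim one factors $p$ into irreducibles over $\Khat$, and for each irreducible factor $p_i$ uses a rank induction on pairs $(p_i,q)$: either $(p_i,q)$ is already constrained, or there exist two non-$K$-isomorphic realizations, in which case one can find $h$ separating them and replace $(p_i,q)$ with the lower-rank pair $(p_i,qh)$ or $(p_i,q\cdot(\text{something involving}~h))$, and recur.

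For uniqueness, I would appeal to the model-theoretic machinery already cited in the excerpt: $\DCF$ admits effective quantifier elimination and is $\omega$-stable, so by Morley--Shelah every consistent extension by a diagram has a prime model, unique up to isomorphism over the parameters, and this prime model coincides with the (countable) atomic model. By the second formulation of $T_K$ in Definition \ref{definition:constraint}, constrained pairs correspond precisely to isolated complete $1$-types over $K$ in $\DCF\cup\Delta(K)$; hence the $\Khat$ constructed above is atomic over $K$ and therefore prime, so any two such $\Khat$ are isomorphic over $K$.

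The main obstacle is the subclaim about dominating constrained pairs: this is the technical heart of Kolchin's original argument and requires a delicate rank-induction using the lexicographic rank on $\bigl(\ord(p),\Deg_{\del^{\ord(p)}Y}(p)\bigr)$ introduced just before the theorem statement. The remaining ingredients (consistency of $\DCF\cup\Delta(K)$ and uniqueness of prime models in $\omega$-stable theories) are either immediate from Blum's axioms together with a standard chain construction, or stated as available facts in the excerpt.
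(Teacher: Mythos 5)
You should note first that the paper itself gives no proof of this theorem: it is stated as a classical result and attributed to Kolchin, Robinson, and Shelah, so there is no internal argument to match yours against. Judged on its own merits, your outline follows the standard two-step route (build a constrained, differentially closed extension by transfinitely adjoining generic realizations of constrained pairs; get uniqueness from primeness/atomicity in the $\omega$-stable theory \DCF), but it has a genuine gap at exactly the point you identify as the technical heart. Your rank induction for the subclaim that every Blum pair $(p,q)$ with $\ord(p)>\ord(q)$ dominates a constrained pair does not terminate as described: when two realizations of $(p_i,q)$ are separated by some $h$, passing to $(p_i,q\cdot h)$ leaves the first coordinate unchanged and increases the second, so the lexicographic rank on $\big(\ord(p),\Deg_{\del^{\ord(p)}Y}(p)\big)$ (order type $\omega^2$) gives no descent; descent is available only when the separating polynomial of lower rank vanishes at one of the realizations, and nothing forces that case. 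The finiteness input that actually makes the density of constrained (isolated) types work is the Ritt--Raudenbush basis theorem (ascending chain condition on radical differential ideals), equivalently the total transcendence of \DCF, which is how Blum and Shelah obtain existence and uniqueness of the prime model; Kolchin's purely algebraic treatment in \cite{KolchinConstrained} replaces this with a different but equally nontrivial analysis. Without importing one of these ingredients, your subclaim is unproven and the construction may fail to stabilize into a differentially closed field.

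Two smaller gaps. The ideal you propose to quotient by, namely all $h$ with $h\cdot q^n$ in the differential ideal generated by $p$ for some $n$, is not in general prime; the correct object is the saturation $[p]:h_p^\infty$ by the product of the initial and separant of an irreducible $p$ (precisely the ideal the paper uses in Lemma \ref{lemma:Dima}, whose primality rests on \cite[III.8, Lemma~6 and IV.9, Lemma~2]{KolchinBook1973}), and one must separately check that $q$ does not lie in it so that the generic zero indeed satisfies $q\neq 0$. Also, your assertion that each stage ``preserves the property of being a constrained extension of $K$'' silently invokes transitivity of constrained extensions, another nontrivial theorem of Kolchin that must be cited rather than assumed. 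The uniqueness half of your argument, via prime models in an $\omega$-stable theory, is fine once existence and atomicity over $K$ are in hand (and is exactly why Shelah appears in the paper's citation), though the parenthetical restriction to countable atomic models should be dropped, since $K$ need not be countable.
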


Our next fact was already cited in Definition \ref{definition:constraint} and is standard.

\begin{proposition}
\label{prop:noniso}
Let $x$ and $y$ lie in any two differential field extensions of $K$.
Then $x\cong_K y$ iff, for all $h\in\KY$, we have
$h(x)=0$ iff $h(y)=0$.
\end{proposition}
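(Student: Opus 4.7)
The plan is to observe that both directions follow from the standard fact that a differential ring homomorphism is determined by where it sends the generators, and that its kernel is a (prime) differential ideal.

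For the forward direction, I would start with the hypothesized differential field isomorphism $\varphi\colon K\la x\ra\to K\la y\ra$ fixing $K$ pointwise with $\varphi(x)=y$. Because $\varphi$ is a ring homomorphism commuting with $\del$ and fixing the coefficients of $h\in\KY$, an easy induction on the terms of $h$ gives $\varphi(h(x))=h(\varphi(x))=h(y)$. Since $\varphi$ is injective, $h(x)=0$ if and only if $h(y)=0$.

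For the converse, the idea is to let the evaluation maps $\text{ev}_x,\text{ev}_y\colon\KY\to\Khat$ (defined by $Y\mapsto x$, $\del^iY\mapsto\del^ix$, respectively $Y\mapsto y$) do the work. Their images are the differential subrings $K\{x\}$ and $K\{y\}$ generated over $K$ by $x$ and $y$. The kernels
\[ I_x=\{h\in\KY:h(x)=0\}, \qquad I_y=\{h\in\KY:h(y)=0\} \]
are prime differential ideals of $\KY$ (prime because the quotients embed into fields, differential because $h(x)=0$ implies $(\del h)(x)=\del(h(x))=0$). The hypothesis says exactly that $I_x=I_y$, so the first isomorphism theorem gives a differential ring isomorphism
\[ K\{x\}\ \cong\ \KY/I_x\ =\ \KY/I_y\ \cong\ K\{y\} \]
that is the identity on $K$ and sends $x$ to $y$. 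Passing to fraction fields (a differential domain has a unique structure of differential field on its quotient field, via the Leibniz quotient rule) extends this to the desired isomorphism $K\la x\ra\cong K\la y\ra$.

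There is no real obstacle here; the only point to verify carefully is that $I_x$ is a \emph{differential} ideal, which is immediate from the identity $(\del h)(x)=\del(h(x))$ in any differential extension, together with the product rule. Everything else is routine universal algebra of differential rings, and the proof is uniform in the two extensions, so it applies whether $x,y$ lie in the same extension of $K$ or in different ones.
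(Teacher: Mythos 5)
Your proposal is correct, and in fact it supplies an argument the paper itself omits: Proposition \ref{prop:noniso} is stated there as ``standard'' with no proof given, and what you wrote is precisely the standard argument (forward direction by transporting evaluations along the $K$-isomorphism, converse by observing that the two evaluation kernels are equal prime differential ideals, applying the first isomorphism theorem to get $\KY/I_x \cong \KY/I_y$ carrying $x$ to $y$ over $K$, and extending to quotient fields via the unique differential structure on the fraction field). One small correction: since $x$ and $y$ lie in arbitrary differential field extensions $L_0,L_1$ of $K$, which need not embed into $\Khat$ (a differential transcendental over $K$, for instance, is omitted from $\Khat$), the evaluation maps should be taken with codomains $L_0$ and $L_1$ rather than $\Khat$; this changes nothing else in your argument.
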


With this much information in hand, we can now state the parallel
to the first half of Theorem \ref{theorem:Rabin}.

\begin{theorem}[{\cite[Corollary~3]{H74}}]
\label{theorem:Harrington}
For every computable differential field $K$, there exists a differentially
closed computable differential field $L$ and a computable differential
field homomorphism $g:K\to L$ such that $L$ is constrained over
the image $g(K)$.
\end{theorem}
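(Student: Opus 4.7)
The plan is to construct $L$ as a computable copy of the differential closure $\Khat$, parallel to Rabin's construction of a computable algebraic closure (Theorem~\ref{theorem:Rabin}), but using constrained pairs in place of irreducible polynomials. Since $\Khat$ is the prime model of $T := \DCF \cup \Delta(K)$ (Theorem~\ref{theorem:cclosure}), the goal reduces to producing a computable prime model of $T$.

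The first ingredient is the decidability of $T$. The theory $\DCF$ is complete with effective quantifier elimination (via Blum's axiomatization), so any sentence in $T$ is equivalent modulo $T$ to a quantifier-free sentence with parameters from $K$, and such sentences are decidable because $K$ is computable. The second ingredient is effective isolation of principal 1-types: a computable procedure that takes each consistent formula over $g(K)$ and returns a constrained pair refining it. The $\omega$-stability of $\DCF$ guarantees that principal types are dense, and the computability of Morley rank in decidable $\omega$-stable theories makes the search for a constrained refinement effective.

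With both ingredients in hand, I would build $L = \bigcup_s L_s$ stage by stage, starting from $L_0 = g(K)$ (with $g$ the identity embedding). At stage $s$, I identify a constrained pair $(p_s,q_s)$ over $g(K)$ not yet realized in $L_s$ and adjoin a witness $z_s$. If $\ord(p_s) = r$, then $z_s,\del z_s,\ldots,\del^{r-1}z_s$ enter as algebraically independent elements over $L_s$ (since $q_s(z_s)\neq 0$, by the discussion preceding Definition~\ref{definition:constraint}), while $\del^r z_s$ is algebraic over them via $p_s(z_s)=0$. Higher derivatives $\del^i z_s$ for $i>r$ are computed by iterated differentiation of $p_s(z_s)=0$ together with the Leibniz rule, so $\del$ extends computably. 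Lemma~\ref{lemma:splitalg} supplies a splitting algorithm for the finite algebraic extension, giving $L_{s+1}$ a computable presentation. Dovetailing across all constrained pairs over $g(K)$ ensures that every principal 1-type is realized, producing the prime model; by Theorem~\ref{theorem:cclosure}, this $L$ is then differentially closed and constrained over $g(K)$.

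The main obstacle is effective isolation itself. Constrainedness of a pair $(p,q)$ is a priori $\Pi^0_1$, since it quantifies over all $h\in\KY$, so decidability of $T$ does not directly give decidability of constrainedness. The resolution exploits the $\omega$-stability of $\DCF$ more delicately: in a decidable $\omega$-stable theory one can effectively compute Morley ranks and degrees of quantifier-free definable sets, and constrained pairs correspond exactly to the formulas whose definable sets have minimal rank (among completions of the given formula) and Morley degree $1$. This is the technical core of the argument, serving as the differential analogue of Kronecker's irreducibility algorithm; once it is in place, the stagewise construction proceeds as routine bookkeeping to maintain computability of the field operations and of $\del$.
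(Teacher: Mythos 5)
The paper does not reprove this result: it invokes Harrington's theorem \cite{H74} that the decidable, complete, $\omega$-stable theory $\DCF\cup\Delta(K)$ has a decidable (computable) prime model, and then merely checks that this formulation yields the statement --- the embedding is computable because $L$ is a computable structure in the language with constants for each element of $K$, and primeness forces $L$ to be constrained over $g(K)$. Your proposal instead tries to rebuild the prime model directly, Rabin-style, and its pivotal ingredient --- ``effective isolation,'' i.e.\ a computable procedure returning a constrained pair refining any consistent formula over $g(K)$ --- is exactly where the argument breaks. First, there is no general theorem that Morley rank is computable in decidable $\omega$-stable theories, and for $\DCF$ the effectivity of Morley rank is itself a notoriously delicate issue; you cannot lean on it. Second, ``minimal rank and Morley degree $1$'' does not characterize isolating formulas: the formula $\del Y=0$ has degree $1$, yet its generic type (a transcendental constant) is non-principal, and the paper itself uses $p(Y)=\del Y$ as the standard example of a non-constrainable polynomial. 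So the proposed criterion misclassifies precisely the kinds of formulas that make the differential case harder than the algebraic one.

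More decisively, the effective-isolation procedure you posit cannot exist uniformly in $K$. Since $\DCF\cup\Delta(K)$ is decidable, such a procedure would let you decide, for any monic $p\in K[Y]$, whether $p(Y)=0$ already generates a principal type (apply the procedure to $p(Y)=0$ and test the resulting entailment), i.e.\ whether all roots of $p$ are conjugate over $K$; together with the effective detection of powers of irreducibles (formal derivative and Euclidean algorithm, as noted in Section \ref{sec:constraints}), this computes the splitting set $S_K$. But Section \ref{subsec:constraint} exhibits computable (constant) differential fields $K$ with noncomputable $S_K$ and noncomputable $\TKbar$; since $T_K$ is c.e., $\TKbar$ is then not even c.e., so the stage-$s$ step ``identify a constrained pair over $g(K)$ not yet realized'' has no computable implementation either. (For the same reason, were your machinery correct it would in particular decide which pairs over $\Q$ generate principal types, essentially resolving the paper's main open question about $\overline{T_{\Q}}$ --- a sign that it proves too much.) This is why Harrington's proof is genuinely different from Rabin's: it constructs a computable prime model \emph{without} ever deciding which formulas are atoms, whereas your construction stands or falls with that decision procedure.
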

For the sake of uniform terminology, we continue to refer to
a computable function $g$ as in Theorem \ref{theorem:Harrington}
as a \emph{Rabin embedding} for the differential field $K$.

Harrington actually proves the existence of a computable structure
$L$ which is the prime model of the theory $T$ generated by \DCF \
and the atomic diagram of $K$. Thus $L$ is a computable structure in
the language $\L'$ in which the language of differential fields is
augmented by constants for each element of $K$.  The embedding of
$K$ into $L$ is accomplished by finding, for any given $x\in K$, the
unique element $y\in L$ which is equal to the constant symbol for
$x$.  Clearly this is a computable process, since $L$ is a
computable $\L'$-structure, and so we have our embedding of $K$ into
$L$.  Since $L$ is the prime model of $T$, it must be constrained
over $K$: otherwise it could not embed into the constrained closure,
which is another model of $T$.  So $L$ satisfies the definition of
the differential closure of $K$, modulo the computable embedding.

The root set and splitting set of a differential field $K$ are still defined,
of course, just as for any other field.  However,
with the differential operator $\del$ now in the language,
several other sets can be defined along the same lines
and are of potential use as we attempt to adapt Rabin's Theorem.
The most important of these is the constraint set $\TKbar$,
from Definition \ref{definition:constraint},
which is analogous in several ways to the splitting set and
will be the focus of our attempts to adapt Kronecker's Theorem
(Theorem \ref{theorem:Kronecker} above)
to differential fields.

We will also need a version of the Theorem of the Primitive Element
for differential fields.  This was provided long since by Kolchin.
\begin{theorem}[{\cite[p.~728]{K42}}]
\label{theorem:Kolchin}
Assume that an ordinary differential field $F$ contains an element
$x$ with $\del x\neq 0$.  If $E$ is a differential subfield of the
differential closure $\Fhat$ with $F \subset E$, and $E$ is generated
(as a differential field over $F$) by finitely many elements, then there is a single
element of $E$ that generates all of $E$ as a differential field over $F$.
\end{theorem}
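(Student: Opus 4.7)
The plan is to mirror Kolchin's strategy, adapting the classical primitive-element argument to the differential setting. First, I would reduce by induction on the size of the generating set: assuming the statement for fewer generators, any $E = F\la a_1,\ldots,a_n\ra$ is handled once the case $E = F\la a,b\ra$ is settled. So the whole proof hinges on producing, for any $a,b\in\Fhat$ which are constrained over $F$, a single element $c\in F\la a,b\ra$ generating the same differential field over $F$.

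Second, I would exploit that $a$ and $b$, being in $\Fhat$, are constrained and hence differentially algebraic over $F$, with minimal differential polynomials of some orders $r$ and $s$. This forces $F\la a,b\ra$ to be a finitely generated field extension of $F$ of finite transcendence degree at most $r+s$, with a transcendence basis drawn from $\{a,\del a,\ldots,\del^{r-1}a,b,\del b,\ldots,\del^{s-1}b\}$; every higher derivative $\del^i a,\del^j b$ is algebraic over this basis via the minimal polynomials and their formal derivatives. So $F\la a,b\ra$ is a finite algebraic extension of a purely transcendental subfield of $F$.

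Third, I would seek the primitive element in the form $c = a + \lambda b$ for some $\lambda\in F$ to be chosen. Clearly $F\la c\ra\subseteq F\la a,b\ra$; the task is the reverse inclusion. Differentiating gives $\del c = \del a + \lambda\del b + (\del\lambda)b$, and the successive $\del^k c$ yield an $F$-linear system whose matrix of coefficients is built from $\lambda,\del\lambda,\del^2\lambda,\ldots$ and whose unknowns are $a,b,\del a,\del b,\ldots$. Solvability of this system for $a$ and $b$ amounts to the nonvanishing of a Wronskian-type determinant, an explicit condition of the form $\Phi(\lambda,\del\lambda,\ldots)\neq 0$ for some nonzero differential rational expression $\Phi$ over $F\la a,b\ra$. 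Once such a $\lambda$ is available, Cramer's rule places $a$ and $b$ inside $F\la c\ra$, and consequently $F\la a,b\ra=F\la c\ra$.

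The main obstacle is showing that $\Phi$ does not vanish identically on $F$, so that an admissible $\lambda$ actually exists. Here the hypothesis that $F$ contains an element $x$ with $\del x\neq 0$ is indispensable: such an $x$ is automatically transcendental over the constant subfield $C_F$ (otherwise differentiating a minimal polynomial forces $\del x=0$), so $x,\del x,\del^2 x,\ldots$ generate a differentially rich one-parameter family inside $F$. I would then choose $\lambda$ from this family and argue that the locus $\Phi=0$, pulled back to the parameter of the family, is cut out by a single nonzero algebraic condition, so infinitely many parameter values evade it. Without the nonconstant element the theorem degenerates to the classical algebraic primitive-element theorem over $C_F=F$, applicable because in that case $\Fhat=\overline{F}$; the nontrivial content of Kolchin's theorem lies precisely in carrying the argument through when genuine derivatives are in play, and that is where the proof's delicacy is concentrated.
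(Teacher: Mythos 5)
The paper itself does not prove this statement; it quotes it from Kolchin \cite{K42}, so your sketch has to stand on its own against Kolchin's actual argument. Your general shape is right — reduce to two generators, try $c=a+\lambda b$, isolate a nonvanishing condition on $\lambda$ and its derivatives, and use the nonconstant $x\in F$ to find an admissible $\lambda$ — but the central step is carried by a mechanism that does not work. Writing $c,\del c,\del^2c,\ldots$ as $F$-linear combinations of $a,b,\del a,\del b,\ldots$ never yields a solvable square system: after $k$ differentiations you have $k+1$ linear equations in $2(k+1)$ unknowns, so there is no Wronskian-type determinant and Cramer's rule never applies. The only relations that close the system are the minimal differential polynomials of $a$, $b$ (and of $c$), and these are algebraic and nonlinear; recovering $a$ and $b$ inside $F\la c\ra$ has to go through the differential ideal of relations (characteristic sets, separants, pseudo-division), which is exactly where Kolchin's proof does its real work and which your sketch replaces with linear algebra that fails. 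Moreover, the assertion that the bad locus $\Phi=0$, pulled back along the family generated by $x$, is ``cut out by a single nonzero algebraic condition'' is exactly the point that needs proof: one must show that substituting, say, $\lambda=c_0+c_1x+\cdots+c_mx^m$ with constant parameters $c_i$ turns $\Phi$ into a polynomial in the $c_i$ that is not identically zero, and that verification is precisely the content of the hypothesis $\del x\neq 0$.

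Your closing remark about constant $F$ is also incorrect: when $\del$ is trivial on $F$, the differential closure $\Fhat$ is much larger than $\Fbar$ (Blum's axioms force it to contain nonconstant elements, e.g.\ a solution of $\del Y=1$ with $Y\neq 0$), so the constant case does not reduce to the algebraic primitive element theorem. Indeed the paper explicitly notes that Kolchin's counterexamples involve generators that are constants algebraically independent over $F$, and that whether the theorem as stated (with $E\subseteq\Fhat$) holds for constant differential fields remains open.
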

Kolchin extended this theorem to partial
differential fields with $m$ derivations: the generalized condition there
is the existence of $m$ elements whose Jacobian is nonzero.
He offered counterexamples in the case where $\del$ is the
zero derivation on $F$, but in the counterexamples, the generators
of $E$ are constants which are algebraically independent over $F$
and therefore do not lie in $\Fhat$,
although they are differentially algebraic over $F$.  (At that time, the definition
of differential closure was not yet well established.)  It remains open whether
the theorem as stated here holds for constant differential fields as well.

One sees readily that Theorem \ref{theorem:Kolchin} can be carried out effectively:
given an enumeration of $F$ within $E$, along with the finitely many
generators of $E$ over $F$, it is easy to find a single generator as described
in the theorem, simply by enumerating the elements generated over $F$
by each single $x\in E$ until some such $x$ is seen to generate
all of the finitely many given generators of $E$.  Of course, without Theorem
\ref{theorem:Kolchin}, one would not be sure whether this process would ever halt.

\section{Constrained Pairs}
\label{sec:constraints}

For fields, the usefulness of the set $S_F$ of reducible polynomials
in $S_F$ is that the irreducible polynomials in $F[X]$ exactly
define the isolated $1$-types over $\textbf{ACF}_0\cup\Delta(F)$.
That is, if
$x$ and $y$ lie in field extensions of $F$ and satisfy the same
irreducible $p(X)\in F[X]$, then they generate isomorphic subfields:
$F(x)\cong F(y)$, via an isomorphism fixing $F$ pointwise and
mapping $x$ to $y$.  (Reducible polynomials in $F[X]$ fail to
have the same property, except for the special case of a power of an
irreducible polynomial, which can be recognized effectively using
the formal derivative and the Euclidean algorithm.)  Moreover, satisfaction of a particular
irreducible polynomial $p(X)$ by a particular $x$
is decidable, in a computable field extending $F$:
the formula $p(X)=0$ generating the $1$-type is quantifier-free,
and the field operations are computable.
Of course, since  $\textbf{ACF}_0\cup\Delta(F)$ has
effective quantifier elimination, we could start with any generating formula
and find a quantifier-free generating formula.

 The important
point is that we have a list of formulas,
each of which generates a principal type, and such that every principal
type is generated by a formula on the list.  Theorem
\ref{theorem:Kronecker} says that this list can be given effectively
for the field $\Q$, and that the effectiveness carries over
to finitely generated computable field extensions.
This is the theorem which we wish to adapt for differential fields $K$,
with the constraint set $\TKbar$ in place of the set $\overline{S_F}$
of irreducible polynomials in $F[X]$.  After some further preliminaries,
we will prove that the natural adaptations of parts (ii) and (iii)
of Theorem \ref{theorem:Kronecker} do indeed carry over to differential fields.
Whether part (i) can likewise be adapted remains an open question.

\begin{fact}
\label{fact:order}
Let $K$ be a differential field.  Then for each $x\in\Khat$,
there is exactly one $p\in K\{ Y\}$ such that $x$
satisfies a constrained pair of the form $(p,q)\in\TKbar$.
(Recall that by definition $p$ is required to be monic and irreducible.)
Moreover, $\ord(p)$ is least among the orders of all nonzero
differential polynomials in the radical differential ideal
$I_K(x)$ of $x$ within $K\{ Y\}$:
$$ I_K(x) = \set{p\in K\{ Y\}}{p(x)=0},$$
and $\deg{p}$ is the least degree of $\del^{\ord(p)}Y$
in any polynomial in $K\{ Y\}$ of order $\ord(p)$ with root $x$.
\end{fact}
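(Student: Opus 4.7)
The plan is to prove the four assertions in sequence: existence of $p$, minimality of $\ord(p)$, minimality of $\deg_{\del^r Y}(p)$ within that order, and uniqueness of $p$. Existence is immediate from Theorem~\ref{theorem:cclosure}, which guarantees that every $x\in\Khat$ satisfies some constrained pair $(p,q)\in\TKbar$; the stipulated normalization (monic and algebraically irreducible) is built into the definition of a constrained pair.

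For the claim that $\ord(p)=r$ is least among the orders of nonzero elements of $I_K(x)$, I would argue by contradiction. Suppose some nonzero $h\in K\{Y\}$ has $h(x)=0$ and $\ord(h)<r$. Since $\Khat\models\DCF$, Blum's axiom applied inside $\Khat$ to the pair $(p,qh)$---whose second member has order $\max(\ord(q),\ord(h))<r=\ord(p)$, and is nonzero because $q\ne 0$ (else $(p,q)$ could not be satisfied) and $h\ne 0$---produces $y\in\Khat$ with $p(y)=0$ and $q(y)h(y)\ne 0$. Then $x$ and $y$ both satisfy the constrained pair $(p,q)$, yet $h(x)=0\ne h(y)$, violating the defining property of a constraint.

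For the minimality of $\deg_{\del^r Y}(p)=d$ among elements of $I_K(x)$ of order $r$, I would use coprimality in a one-variable polynomial ring. Suppose $h\in I_K(x)$ has order $r$ and degree in $\del^r Y$ strictly less than $d$. Since $p$ is algebraically irreducible in the UFD $K[Y,\del Y,\ldots,\del^r Y]$ and has positive degree in $\del^r Y$, Gauss's lemma makes $p$ irreducible in $F[\del^r Y]$, where $F=K(Y,\del Y,\ldots,\del^{r-1}Y)$. The degree bound prevents $p\mid h$ in $F[\del^r Y]$, so $p$ and $h$ are coprime there, giving a B\'ezout relation $1=\alpha p+\beta h$ with $\alpha,\beta\in F[\del^r Y]$. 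Clearing denominators yields $A,B\in K[Y,\ldots,\del^r Y]$ and a nonzero $c\in K[Y,\ldots,\del^{r-1}Y]$ with $c=Ap+Bh$. Evaluating at $x$ forces $c(x)=0$, but then $c$ is a nonzero element of $I_K(x)$ of order at most $r-1$, contradicting the minimality just established.

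For uniqueness, if $(p_1,q_1)$ and $(p_2,q_2)$ are two constrained pairs satisfied by $x$, the two minimality statements force $\ord(p_1)=\ord(p_2)=r$ and $\deg_{\del^r Y}(p_1)=\deg_{\del^r Y}(p_2)=d$; monicity makes the leading $(\del^r Y)^d$ terms cancel, so $p_1-p_2$ has strictly lower rank, lies in $I_K(x)$, and must therefore vanish. The main obstacle I anticipate is in the degree-minimality step: one must take care that the B\'ezout combination $c=Ap+Bh$ obtained after denominator clearing genuinely lies in $K[Y,\ldots,\del^{r-1}Y]$ rather than merely in $K[Y,\ldots,\del^r Y]$. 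Gauss's lemma handles the transfer of irreducibility from $K[Y,\ldots,\del^r Y]$ to $F[\del^r Y]$, but one still has to verify that the multiplier used to clear denominators is a nonzero element of $K[Y,\ldots,\del^{r-1}Y]$, so that the order genuinely drops and the contradiction with step two goes through.
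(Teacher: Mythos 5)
Your proposal is correct, and its first half coincides with the paper's own argument: existence of the pair from the constrainedness of $\Khat$ over $K$, and order-minimality by applying Blum's axioms to $(p,\,q\cdot h)$ to produce a $y$ satisfying $(p,q)$ with $h(y)\neq 0=h(x)$, contradicting Definition \ref{definition:constraint}. Where you diverge is in the degree-minimality and uniqueness steps. The paper handles these by working over the field $K\big(x,\del x,\ldots,\del^{r-1}x\big)$ (algebraically independent by the order-minimality just proved) and invoking Blum's axioms a \emph{second} time: if the minimal polynomial of $\del^rx$ over that field were a proper factor of $p\big(x,\ldots,\del^{r-1}x,Y\big)$, the quotient would have a root $y$ with $q(y)\neq 0$ not annihilated by the minimal polynomial, again contradicting the constraint property (with irreducibility of $p$ ruling out the power case); degree-minimality and uniqueness of $p$ then follow from $p$ being the minimal polynomial. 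You instead stay with the generic field $F=K\big(Y,\ldots,\del^{r-1}Y\big)$, transfer irreducibility of $p$ to $F[\del^rY]$ via Gauss's lemma, and use a B\'ezout relation, cleared of denominators, to produce a nonzero $c\in K\big[Y,\ldots,\del^{r-1}Y\big]$ with $c(x)=0$, contradicting the order-minimality already in hand; uniqueness then comes from subtracting two monic polynomials of the same rank. This is a legitimate alternative: it uses differential closedness only once and makes the uniqueness argument fully explicit (the paper leaves it implicit in the phrase ``minimal differential polynomial''), at the cost of the Gauss's-lemma/B\'ezout bookkeeping. The worry you flag at the end is easily discharged: taking $c$ to be a common denominator of the coefficients of $\alpha,\beta\in F[\del^rY]$ gives a nonzero $c\in K\big[Y,\ldots,\del^{r-1}Y\big]$ and $A=c\alpha$, $B=c\beta$ in $K\big[Y,\ldots,\del^rY\big]$, so the order genuinely drops and the contradiction goes through.
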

\begin{proof}
Since $\Khat$ is constrained over $K$, each $x\in\Khat$
satisfies at least one constrained pair $(p,q)\in\TKbar$.
Set $r=\ord(p)$, and 
suppose there were a nonzero $\ptilde(Y)\in I_K(x)$
with $\ord(\ptilde) < r$.
By Blum's axioms for $\textbf{DCF}_0$, there would exist
$y\in\Khat$ with $$p(y)=0\neq q(y)\cdot\ptilde(y),$$
since the product $(q\cdot\ptilde)$ has order $< r$.
But then $y$ also satisfies $(p,q)$, yet $\ptilde(y)\neq 0=\ptilde(x)$,
so that $K\la x\ra\not\cong K\la y\ra$.  This would contradict
Definition \ref{definition:constraint}.  Hence $r$ is the least order
of any nonzero differential polynomial with root $x$.

It follows from minimality of $r$ that
$\big\{ x,\del x,\ldots,\del^{r-1}x\big\}$ is algebraically independent over $K$.
Next we claim
that the minimal polynomial of $\del^rx$ over $K\big(x,\ldots,\del^{r-1}x\big)$ must equal
$$p\big(x,\del x,\ldots,\del^{r-1}x,Y\big).$$  Clearly the minimal polynomial must divide
$p\big(x,\del x,\ldots,\del^{r-1}x,Y\big)$, and if it were a proper factor,
then by Blum's axioms for $\DCF$, the quotient of
$p\big(x,\del x,\ldots,\del^{r-1}x,Y\big)$ by the minimal polynomial
would have a root $y$ with $q(y)\neq 0$ which would not be a root of
the minimal polynomial, so that $K\la x\ra\not\cong K\la y\ra$,
contradicting Definition \ref{definition:constraint}.  (We used here the fact
that with $p(Y,\del Y,\ldots,\del^{r}Y)$ irreducible
in $K\big[Y,\del Y,\ldots,\del^r Y\big]$, $$p\big(x,\del x,\ldots,\del^{r-1}x,Y\big)$$
cannot be a power of the minimal polynomial.)
Thus $p$ is the \emph{minimal differential polynomial} of $x$ over $K$.
\end{proof}

\begin{definition}
\label{definition:order} If $K\subseteq L$ is an extension of differential
fields, then for each $x\in L$, we define $\ord_K(x)=\ord(p)$,
where $(p,q)$ is any constrained pair in $\TKbar$ satisfied by
$x$.  
Notice that in the differential closure of $K$, every element has
a well-defined finite order over $K$, by Fact \ref{fact:order}.
(A more general definition of order for elements of differential field
extensions sets $\ord_K(x)\leq\omega$ be the transcendence
degree of $K\la x\ra$ as a field extension of $K$.)
\end{definition}

In fact, the irreducibility of $p(Y)$ is barely necessary in Definition
\ref{definition:constraint}.  The condition that $K\la x\ra\cong K\la y\ra$
for all $x$ and $y$ satisfying the constrained pair shows that $p(Y)$
cannot factor as the product of two distinct differential polynomials.
The only reason for requiring irreducibility of $p$ is
to rule out the possibility of $p$ being a perfect square,
cube, etc.\ in $K\{ Y\}$.  If these were allowed,
the uniqueness in Fact \ref{fact:order} would no longer hold.

\begin{proposition}
\label{prop:theq}
Let $p,q,\qtilde\in\KY$, with $(p,q)$ 
in the constraint set $\TKbar$ and $\ord(\qtilde)<\ord(p)$.
Then $(p,\qtilde)\in\TKbar$ iff,
for every $x\in\Khat$, $x$ satisfies $(p,q)$ iff $x$ satisfies $(p,\qtilde)$.
\end{proposition}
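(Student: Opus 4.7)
The plan is to produce a single element $y\in\Khat$ witnessing satisfaction of both pairs at once, and then to transfer information between $y$ and an arbitrary $x$ via the principal-type condition built into constrainedness together with Proposition \ref{prop:noniso}. Concretely, since $\ord(q\cdot\qtilde)\le\max(\ord(q),\ord(\qtilde))<\ord(p)$ and $p$ is a nonzero irreducible element of $\KY$, Blum's axioms applied inside $\Khat$ to the pair $(p,\,q\cdot\qtilde)$ furnish a $y\in\Khat$ with $p(y)=0$ and $q(y)\qtilde(y)\ne 0$, so that $y$ satisfies $(p,q)$ and $(p,\qtilde)$ simultaneously.

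For the forward implication, assume $(p,\qtilde)\in\TKbar$. If $x\in\Khat$ satisfies $(p,q)$, then $x$ and $y$ both satisfy the constrained pair $(p,q)$, hence $x\cong_K y$; by Proposition \ref{prop:noniso} this gives $\qtilde(x)=0\iff\qtilde(y)=0$, and since $\qtilde(y)\ne 0$ we conclude that $x$ also satisfies $(p,\qtilde)$. The symmetric argument, now using constrainedness of $(p,\qtilde)$ against the same $y$, shows that any $x\in\Khat$ satisfying $(p,\qtilde)$ satisfies $(p,q)$.

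For the backward implication, assume the two satisfaction sets in $\Khat$ coincide. The polynomial $p$ is already monic and irreducible and $\ord(\qtilde)<\ord(p)$ by hypothesis, so only the principal-type condition remains to be checked for $(p,\qtilde)$. If $x_0,x_1\in\Khat$ both satisfy $(p,\qtilde)$, the hypothesis forces both to satisfy $(p,q)$, and then constrainedness of $(p,q)$ yields $x_0\cong_K x_1$, as required.

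The main subtlety, rather than any obstacle, is justifying that verifying the principal-type condition for $(p,\qtilde)$ only over elements of $\Khat$ is enough to land $(p,\qtilde)$ in $\TKbar$; this is exactly the equivalence of the two formulations of $T_K$ recorded immediately after Definition \ref{definition:constraint} (and is what the parenthetical in the proposition's statement about $\Khat$ anticipates). Once that reduction to $\Khat$ is in hand, both implications are essentially one-line consequences of the existence of the common witness $y$ and Proposition \ref{prop:noniso}.
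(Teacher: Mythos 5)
Your proof is correct, and its backward direction coincides with the paper's: both verify the principal-type condition for $(p,\qtilde)$ by pushing satisfiers of $(p,\qtilde)$ into satisfiers of $(p,q)$ and invoking constrainedness of $(p,q)$; you are in fact more explicit than the paper about why it suffices to check this over $\Khat$ rather than over arbitrary extensions. The forward direction, however, is argued by a different mechanism. The paper simply applies Fact \ref{fact:order}: any $x$ satisfying either constrained pair has $p$ as its minimal differential polynomial, so the lower-order polynomials $q$ and $\qtilde$ cannot vanish at $x$, and no auxiliary element is needed. You instead manufacture a common witness $y\in\Khat$ with $p(y)=0\neq q(y)\qtilde(y)$ via Blum's axioms applied to $(p,q\cdot\qtilde)$, and then transfer nonvanishing from $y$ to an arbitrary satisfier $x$ through the isomorphism $x\cong_K y$ supplied by the principal-type property together with Proposition \ref{prop:noniso}. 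The paper's route is shorter because Fact \ref{fact:order} has already been proved; yours is essentially self-contained, re-deriving inline exactly the instance of that fact that is needed (indeed Fact \ref{fact:order} is itself proved by the same Blum-axiom device), at the small cost of needing $q\cdot\qtilde\neq 0$ of order below $\ord(p)$ -- an implicit nontriviality assumption on $q$ and $\qtilde$ that the paper's argument makes just as silently when it concludes $\qtilde(x)\neq 0$ from minimality. Both arguments are sound; yours trades reliance on a prior lemma for one extra application of differential closedness.
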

\begin{proof}
First suppose $(p,\qtilde)\in\TKbar$. If $x$ satisfies $(p,q)$,
then $p$ is the minimal differential polynomial of $x$ over $K$
(by Fact \ref{fact:order}), and so $\qtilde(x)\neq 0$.
Likewise, if $x$ satisfies $(p,\qtilde)$, then it satisfies $(p,q)$ as well.
Conversely, if the second condition holds, then every $x$ and $y$
satisfying $(p,\qtilde)$ also satisfy $(p,q)$, hence have $x\cong_K y$,
putting $(p,\qtilde)\in\TKbar$.
\end{proof}

It is quickly seen that if $(p,q)\in\TKbar$,
then also $(p,q\cdot h)\in\TKbar$ for every $h\in K\{ Y\}$.
So Proposition \ref{prop:theq} is nontrivial, and
the constrained pair satisfied by an $x\in\Khat$ is not unique,
although its first component is unique.  The
proposition shows that the first component essentially determines
the constrained pair:  two constrained pairs with the same first component
define the same set.  On the other hand, not all
monic irreducible differential polynomials $p$ can be the
first component of a constrained pair; the polynomial
$p(Y)=\del Y$ is a simple counterexample.  In Section
\ref{sec:constrainability} we will address the question of \emph{constrainability}:
for which $p\in\KY$ does there exist some $q\in\KY$ with $(p,q)\in\TKbar$?

\section{Decidability in the Constraint Set and Rabin Image}
\label{sec:theorem}
\subsection{Decidability in the Constraint Set}
\label{subsec:constraint}

The two theorems in this section were proven in
\cite{MOCiE}, a preliminary report on the work in this article.
They address the adaptation of Rabin's Theorem
to the context of differential fields.  The proofs are
straightforward, and we do not repeat them here.

\begin{theorem}[{\cite[Theorem~10]{MOCiE}}]
\label{theorem:constraints}
Let $K$ be any computable differential field,
and $g:K\to\Khat$
a (differential) Rabin embedding of $K$.  Then all of the following
are computable in an oracle for the constraint set $\TKbar$:
the splitting set $S_K$, the Rabin image $g(K)$,
and the order function $\ord_K$ on $\Khat$.
If additionally our derivation is nontrivial, then the set
$D_K$ of finite subsets of $\Khat$ algebraically dependent
over $g(K)$ is also computable in a $\TKbar$-oracle.
\end{theorem}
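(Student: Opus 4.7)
My plan is to verify the four claims of the theorem separately using the $\TKbar$-oracle; the first two are quick, and the bulk of the work is the algebraic-dependence claim.

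For $S_K \leq_T \TKbar$, I would normalize any $p \in K[X]$ to monic $\tilde p$ and check whether $(\tilde p, 1) \in \TKbar$. If $\tilde p$ is monic irreducible in $K[X]$, then for any two roots $x, y \in \Khat$ the field isomorphism $K(x) \cong K(y)$ over $K$ sending $x$ to $y$ is automatically a differential isomorphism, because in characteristic $0$ the derivation extends uniquely to any algebraic extension; combined with $\ord(1) = -1 < 0 = \ord(\tilde p)$, this makes $(\tilde p, 1)$ a constrained pair. Conversely, first components of constrained pairs must be irreducible by Definition \ref{definition:constraint}, so $p \in S_K$ iff $(\tilde p, 1) \notin \TKbar$.

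For the order function $\ord_K$ and the Rabin image $g(K)$, I would use a uniform strategy: given $x \in \Khat$, enumerate pairs $(p, q) \in (\KY)^2$, use the $\TKbar$-oracle to isolate constrained pairs, and test in the computable structure $\Khat$ whether $p(x) = 0 \neq q(x)$. Since $\Khat$ is constrained over $g(K)$ (Theorem \ref{theorem:Harrington}), this search terminates; by Fact \ref{fact:order} the found $p$ is the unique minimal differential polynomial of $x$, yielding $\ord_K(x) = \ord(p)$, and $x \in g(K)$ iff $p(Y) = Y - c$ for some $c \in K$, which is readable from $p$.

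The substantive work is the final claim $D_K \leq_T \TKbar$, where the hypothesis of nontrivial derivation enters through Kolchin's primitive element theorem (Theorem \ref{theorem:Kolchin}). Given a finite tuple $\xvec = (x_1, \ldots, x_n)$, I would apply the effective form of Kolchin's theorem to locate $y \in g(K)\la \xvec\ra$ with $g(K)\la y\ra = g(K)\la \xvec\ra$. The previous step supplies $r = \ord_K(y)$ together with the minimal differential polynomial $p$ of $y$, which give a computable presentation of $g(K)\la\xvec\ra$ as the finite algebraic extension $g(K)(Y_0, \ldots, Y_{r-1})[Y_r]/(p)$ of a purely transcendental extension with explicit transcendence basis $\{Y_0, \ldots, Y_{r-1}\}$ over $g(K)$. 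After locating each $x_i$ within this presentation by search in $\Khat$, Lemma \ref{lemma:algdep} then decides algebraic dependence of $\xvec$ over $g(K)$ from the explicit transcendence basis.

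The hard part will be precisely this last step: the oracle $\TKbar$ natively reports differential structure over $K$, while the target question concerns algebraic dependence in the pure field $g(K)(\xvec)$. Kolchin's theorem is the essential bridge, compressing the multi-element differential extension $g(K)\la\xvec\ra$ to a single generator $y$ whose order and minimal differential polynomial produce a concrete finitely generated field presentation in which Lemma \ref{lemma:algdep} applies.
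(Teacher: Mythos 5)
Your proof is correct and follows the route the paper intends: the paper itself omits the argument for Theorem \ref{theorem:constraints}, deferring to \cite[Theorem~10]{MOCiE}, but the tools it assembles for exactly this purpose --- Fact \ref{fact:order} for recovering the minimal differential polynomial and order of any $x\in\Khat$ by searching for a constrained pair it satisfies, the effectivity remark following Theorem \ref{theorem:Kolchin}, and Lemma \ref{lemma:algdep} applied to the finitely generated subfield $g(K)\la\xvec\ra$ with transcendence basis $\{y,\del y,\ldots,\del^{r-1}y\}$ --- are precisely the ones you deploy, with the nontrivial-derivation hypothesis entering only through Kolchin's primitive element theorem, as in your sketch. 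Your splitting-set test (irreducibility of the monic $\tilde{p}$ iff $(\tilde{p},1)\in\TKbar$) is likewise the standard reduction and is valid under Definition \ref{definition:constraint}, since in characteristic $0$ the derivation extends uniquely to algebraic field extensions, so the field isomorphism between $K(x_0)$ and $K(x_1)$ is automatically differential.
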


In particular, the Rabin image $g(K)$ is computable in a $T_K$-oracle.
This means that $\left(g(K)\cap C_{\Khat}\right)$ is a $T_K$-computable
subfield of the constant field $C_{\Khat}$, which in turn is a
computable subfield of $\Khat$. Indeed, the restriction of $g$ to
$C_K$ is a Rabin embedding of the computable field $C_K$ into its
algebraic closure $C_{\Khat}$, in the sense of Theorem
\ref{theorem:Rabin}, the original theorem of Rabin for fields.

Therefore, if $C$ is any computable field without a splitting algorithm,
we can set $K=C$ to be a differential field with $C_K=C$
(by using the zero derivation).  Theorem \ref{theorem:Harrington}
gives a Rabin embedding $g$ of this differential field $K$
into a computable presentation of $\Khat$.
Theorem \ref{theorem:Rabin} shows that $g(K)=g(C_K)$
is noncomputable within the computable subfield $C_{\Khat}$,
and therefore must be noncomputable within $\Khat$ itself.
Finally, Theorem \ref{theorem:constraints} shows that the constraint
set $\TKbar$ of this differential field $K$ was noncomputable.

So there do exist computable differential fields, even with
the simplest possible derivation, for which the constraint set
is noncomputable.  In the opposite direction, it is certainly
true that if $K$ itself is already differentially closed,
then its constraint set is computable, since the constrained pairs are
exactly those pairs of the form $(Y-a,b)$ with $a,b\in K$ and $b\neq 0$.
(Such a pair is satisfied by exactly one $x\in K$, hence by exactly
one $x\in\Khat=g(K)$, using the identity function as the Rabin
embedding $g$. Thus it trivially satisfies Definition
\ref{definition:constraint}.) 

We do not yet know any examples of
computable differential fields which have computable constraint set,
yet are not differentially closed.  The decidability of the
constraint set is a significant open problem for computable
differential fields in general.  So likewise is the decidability of
constrainability: for which $p\in K\{ Y\}$ does there exist a $q$
with $(p,q)\in\TKbar$?  We address this question in Section
\ref{sec:constrainability}.  The comments in the proof of Theorem
\ref{theorem:constraints} in \cite{MOCiE} make it clear that $p(Y)=\del Y$ is an example
of a differential polynomial which is not constrainable.

\subsection{Decidability in the Rabin Image}
\label{subsec:g(K)}

Rabin's Theorem for fields, stated above as Theorem \ref{theorem:Rabin},
gave the Turing equivalence of the Rabin image $g(F)$ and the
splitting set $S_F$. 
Our principal analogue
of $S_F$ for differential fields $K$ is the set $T_K$,
and Theorem \ref{theorem:constraints} makes some headway in identifying
sets, including $g(K)$ but not only that set, whose join is Turing-equivalent
to $T_K$.  It is also natural to ask about Rabin's Theorem from the other
side:  what set (or what join of sets)
must be Turing-equivalent to $g(K)$?
We now present one step towards an answer to that question,
using the notion of a \emph{linear} differential polynomial in $K\{ Y\}$.
Recall that ``linear'' here is used in exactly the sense of
field theory:  the polynomial has a constant term, and every other
term is of the form $a\del^iY$, for some $a\in K$ and $i\in\omega$.
If the constant term is $0$, then the polynomial is
\emph{homogeneous} as well, every term having degree $1$.

The solutions in $\Khat$
of a homogeneous linear polynomial $p(Y)$ of order $r$
are well known to form an $r$-dimensional vector space
over the constant field $C_{\Khat}$.  By additivity,
the solutions in $\Khat$ to any linear polynomial of order $r$
then form the translation of such a vector space by a single
root $x$ of $p(Y)$.  Of course, not all of the solutions
in $\Khat$ need lie in $K$:  the solutions to $p(Y)=0$
in $K$ (if any exist) form the translation of a vector space
over $C_K$ of dimension~$\leq r$.

\begin{proposition}
\label{prop:FRg}\cite[Proposition~11]{MOCiE}
In a computable differential field $K$ whose field $C_K$
of constants is algebraically closed, the full linear root set $FR_K$:
$$\set{\text{linear~}p(Y)\in K\{ Y\} }{p(Y)=0\text{~has
solution space in $K$ of dim~}\ord(p)},$$
is computable from an oracle for the image $g(K)$ of $K$
in any computable differential
closure $\Khat$ of $K$ under any Rabin embedding $g$.
Moreover, the Turing reduction is uniform in indices for $\Khat$ and $g$.
\end{proposition}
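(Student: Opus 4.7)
The plan is to reduce the question of whether a linear $p(Y)\in\KY$ of order $r$ belongs to $FR_K$ to a finite batch of membership queries to $g(K)$, applied to explicitly constructed elements of $\Khat$. The skeleton has three steps: identify the constant subfield of $\Khat$ with $g(C_K)$, effectively produce a basis for the $\Khat$-solution space, and test that basis against the oracle.

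The first step uses the hypothesis that $C_K$ is algebraically closed to conclude $C_{\Khat}=g(C_K)$. Every $c\in C_{\Khat}$ satisfies a constrained pair over $K$, so its atomic type is principal; hence $c$ cannot be field-theoretically transcendental over $K$, since the type of a transcendental constant requires infinitely many polynomial inequalities and is therefore not principal. Thus $c$ has a minimal polynomial $m(Y)\in K[Y]$. Differentiating the identity $m(c)=0$ coefficientwise and applying $\del c=0$ yields $m^{\del}(c)=0$, and minimality of $m$ (together with $\deg m^{\del}<\deg m$, since $m$ is monic) forces $m^{\del}=0$, so $m\in C_K[Y]$; algebraic closedness of $C_K$ then gives $c\in g(C_K)$. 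Consequently the $\Khat$-solution space of a homogeneous linear $p(Y)$ of order $r$, classically an $r$-dimensional vector space over $C_{\Khat}$, is already an $r$-dimensional vector space over $C_K$; for inhomogeneous linear $p$ it is an $r$-dimensional affine $C_K$-space.

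Second, I would effectively construct a particular solution $y_0\in\Khat$ of $p(Y)=0$ together with elements $y_1,\ldots,y_r\in\Khat$ solving the homogenization $p_{\mathrm{hom}}$ and linearly independent over $C_{\Khat}$. By Blum's axioms both exist in the differentially closed $\Khat$, and they are located by a terminating search: evaluating $p$ at candidates is a routine computation in the computable field $\Khat$, and linear independence over the constants is detected by nonvanishing of the Wronskian $W(y_1,\ldots,y_r)$, which involves only $\del$ and the field operations. Neither task uses the oracle.

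The crux of the reduction is the observation that the $K$-solution set of $p(Y)=0$ has maximal dimension $r=\ord(p)$ if and only if the entire $\Khat$-solution space $y_0+\mathrm{span}_{C_K}(y_1,\ldots,y_r)$ lies in $g(K)$: any $r$-dimensional affine $C_K$-subspace of an $r$-dimensional affine $C_K$-space is the whole space, and $g(K)$ is closed under $C_K$-affine combinations because $C_K\subseteq g(K)$. The decision therefore reduces to the $r+1$ oracle queries ``is $y_i\in g(K)$?'' for $i=0,1,\ldots,r$, and uniformity in computable indices for $\Khat$ and $g$ is immediate from the construction. The main obstacle I anticipate is establishing $C_{\Khat}=g(C_K)$: without the algebraic closedness of $C_K$, the $C_{\Khat}$- and $C_K$-dimensions of the solution space can decouple and the entire reduction collapses.
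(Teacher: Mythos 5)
Your proposal is correct and follows essentially the approach the paper intends: the paper itself defers this proof to \cite{MOCiE}, but the discussion immediately preceding the proposition (solution spaces of dimension $\ord(p)$ over $C_{\Khat}$ versus dimension $\leq\ord(p)$ over $C_K$) sets up exactly your argument. Identifying $C_{\Khat}$ with $g(C_K)$ via algebraic closedness of $C_K$, effectively locating a particular solution plus a fundamental system via the Wronskian, and then making $r+1$ membership queries to $g(K)$ is the intended straightforward reduction, and your verification of the key equivalence is sound.
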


It would be of interest to try to extend this result to the case where
$C_K$ need not be algebraically closed, and/or to the situation
involving the differential closure of an extension of $K$ by
algebraically independent constants.

\section{The Constrainability Set}
\label{sec:constrainability}

We now address the question of whether a given differential
polynomial $p(Y)\in\KY$ is \emph{constrainable}
within its computable differential field $K$.
This question asks whether there exists some $q\in\KY$
such that $(p,q)$ forms a constrained pair, i.e., 
such that $(p,q)\notin T_K$.  Such a $q$ is called a
\emph{constraint} on $p$.
\begin{definition}
\label{definition:constrainability}
For a differential field $K$, the set of
\emph{unconstrainable} differential polynomials is the set
$$ U_K =\set{p\in\KY}{(\forall q\in\KY) (p,q)\in T_K}.$$
Any $p(Y)\notin U_K$ is said to be \emph{constrainable}.
\end{definition}

As the natural c.e.\ and co-c.e.\ sets for fields are all named
in their existential forms ($R_K$, $S_K$, $T_K$, and $D_K$),
we propose that related $\Sigma^0_2$ and $\Pi^0_2$ sets
should always be named in their $\Pi^0_2$ (that is, $\forall\exists$) forms.
As in the case of $U_K$, this will often involve
a single $\forall$ quantifier over an existential set,
which makes a simple and natural definition.
An alternative definition follows from our next proposition.
\begin{fact}
\label{fact:mindiffpoly}
A differential polynomial $p\in\KY$ is constrainable over $K$
iff $p$ is the minimal differential polynomial of some $x$
in the differential closure $\Khat$ of $K$.
\end{fact}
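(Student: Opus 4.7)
The plan is to derive this fact almost immediately from Fact \ref{fact:order}, which already does the heavy lifting by identifying the unique $p$ associated to each $x \in \Khat$ via a constrained pair with that $p$ as its minimal differential polynomial.

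For the forward direction, suppose $p$ is constrainable, so there exists $q \in \KY$ with $(p,q) \in \TKbar$. By the definition of a constrained pair, $p$ is monic and irreducible with $\ord(q) < \ord(p)$. Since $\Khat$ satisfies \DCF, Blum's axioms guarantee some $x \in \Khat$ with $p(x) = 0 \neq q(x)$. Then $x$ satisfies the constrained pair $(p,q)$, and by Fact \ref{fact:order}, $p$ is the minimal differential polynomial of $x$ over $K$.

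For the reverse direction, suppose $p$ is the minimal differential polynomial of some $x \in \Khat$. Because $\Khat$ is a differential closure of $K$ and hence constrained over $K$, the element $x$ satisfies some constrained pair $(p',q') \in \TKbar$. By Fact \ref{fact:order}, $p'$ is the minimal differential polynomial of $x$, and the uniqueness clause of that fact (among monic irreducibles) forces $p' = p$. Hence $(p,q') \in \TKbar$, so $p$ is constrainable, with $q'$ serving as a constraint.

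The only point that requires any care is verifying that the existential step in the forward direction is legitimate: that a constrained pair $(p,q)$ is actually realized by some element of $\Khat$. This is immediate from Blum's axiomatization once we note $\ord(q) < \ord(p)$, but it is worth recording explicitly, since without a realization there would be no $x$ to which Fact \ref{fact:order} could be applied. Everything else is bookkeeping against the uniqueness and minimality statements already established.
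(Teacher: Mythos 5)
Your proposal is correct and follows essentially the same route as the paper: both directions are read off from Fact \ref{fact:order}, after noting that any constrained pair $(p,q)$ is realized by some $x\in\Khat$ (the paper asserts this directly, while you justify it via Blum's axioms) and that the first component of any constrained pair satisfied by $x$ must coincide with the minimal differential polynomial of $x$.
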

\begin{proof}
Every constrained pair $(p,q)$ is satisfied by some $x\in\Khat$.
Fact \ref{fact:order} shows that $p$
must be the minimal differential polynomial of this $x$,
and also shows that the minimal differential
polynomial of each $x\in\Khat$ is the first component
of the constrained pair satisfied by that $x$.
\end{proof}

This alternative definition of constrainability is no simpler,
however:  it says that there exists an $x$ satisfying $p$
such that no polynomial in $\KY$ of lesser rank than $p$
can have $x$ as a zero.
We now show that $U_K$ cannot be defined
by any formula simpler than $\Pi^0_2$.

\begin{theorem}
\label{theorem:Sigma2}
There exists a computable differential field $K$
in which the set of constrainable polynomials
$p\in\KY$ is $\Sigma^0_2$-complete.
\end{theorem}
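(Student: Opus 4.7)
For the upper bound, note that the defining formula
$U_K = \{p \in \KY : \forall q \in \KY,\ (p,q) \in T_K\}$
is a universal quantifier over the $\Sigma^0_1$ predicate $T_K$ (using the second equivalent form in Definition \ref{definition:constraint}), so $U_K$ is $\Pi^0_2$ and the set of constrainable polynomials is $\Sigma^0_2$. The substantive content of the theorem is the $\Sigma^0_2$-hardness.

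For hardness, my plan is to fix a convenient $\Sigma^0_2$-complete set---for example $A = \{e : W_e \neq \omega\}$, whose complement $\{e : W_e = \omega\}$ is the standard $\Pi^0_2$-complete set (many-one equivalent to $\mathrm{Tot}$)---and construct a single computable differential field $K$ together with a uniformly computable sequence $e \mapsto p_e \in \KY$ of monic irreducible differential polynomials such that $p_e$ is constrainable over $K$ iff $e \in A$. The map $e \mapsto p_e$ is then a computable many-one reduction of $A$ into the constrainable set, yielding $\Sigma^0_2$-hardness.

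The field $K = \bigcup_s K_s$ is built in stages over a computable base differential field $K_0$ in which every $p_e$ already lives. The key underlying idea is that each $p_e$ should have, in the differential closure $\widehat{K_0}$, an infinite family of independent realizations of its principal type, call them $w_{e,0}, w_{e,1}, \ldots$ (this is possible precisely because the differential closure is not minimal, so non-algebraic principal types are realized infinitely often). Over $K_0$ alone each $w_{e,n}$ is a candidate constrained witness for $p_e$. Whenever $n$ is enumerated into $W_e$ at stage $s$, we adjoin to $K_{s+1}$ an element $\alpha_{e,n} \in K_0\la w_{e,n}\ra \setminus K_0$ constructed so that the relation expressing $\alpha_{e,n}$ as a polynomial in $(w_{e,n}, \del w_{e,n}, \ldots, \del^{\ord(p_e)-1} w_{e,n})$ forces $w_{e,n}$'s minimal differential polynomial over $K$ to drop to strictly lower rank than $p_e$. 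Because the $w_{e,n}$ are chosen algebraically independent over $K_0$, the element $\alpha_{e,n}$ is transcendental over $K_0\la w_{e,n'}\ra$ for $n' \neq n$ and does not touch any other witness. In the limit $p_e$ is constrainable iff some $w_{e,n}$ survives iff $\omega \setminus W_e \neq \emptyset$ iff $e \in A$.

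The main obstacle is realizing this differential-algebraic engineering in detail. A concrete construction requires: (i) an explicit $p_e$ of order at least two whose principal type over $K_0$ admits an algebraically independent infinite family of realizations in $\widehat{K_0}$; (ii) an effective procedure producing the killing element $\alpha_{e,n}$ from the data $(e,n)$; (iii) a verification that adjoining $\alpha_{e,n}$ surgically destroys the $n$th witness without disturbing the irreducibility of $p_e$ in $\KY$ or the other witnesses $w_{e,n'}$; and (iv) stagewise bookkeeping to keep $K$ computably presented and $e \mapsto p_e$ uniformly computable. The delicate point is that Proposition \ref{prop:theq} makes all \emph{successfully} constrained witnesses $K$-isomorphic, so the multiplicity we need can come only from the non-minimality of $\widehat{K_0}$; the bulk of the proof will be devoted to establishing the algebraic independence of the $w_{e,n}$'s and the strict locality of the killing moves.
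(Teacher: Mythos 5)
Your upper bound is fine, but the hardness half is a plan rather than a proof, and its central equivalence is unjustified and almost certainly not achievable in the form you state. Constrainability of $p_e$ over $K$ is a property of the type of a \emph{generic} zero of $p_e$ over $K$ (Fact \ref{fact:mindiffpoly}): $p_e$ is constrainable iff some element of a differential closure of $K$ has $p_e$ as its minimal differential polynomial. It is not a property of a pre-designated family of realizations $w_{e,n}$ inside the fixed model $\widehat{K_0}$. Adjoining $\alpha_{e,n}\in K_0\la w_{e,n}\ra$ does make that particular $w_{e,n}$ non-generic over $K$, but even after you have ``killed'' every designated witness, the differential closure of the new field $K$ may simply realize the generic type of $p_e$ over $K$ by a fresh element, so you have no argument that $W_e=\omega$ forces $p_e\in U_K$. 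Conversely, when $W_e$ is infinite but coinfinite (which your choice $A=\{e: W_e\neq\omega\}$ forces you to handle), you must exhibit a single constraint $q\in\KY$ of order $<\ord(p_e)$; since each killed witness is a zero of $p_e$ lying in $\widehat{K}$ and is not isomorphic over $K$ to a generic zero, such a $q$ would have to vanish at all infinitely many algebraically independent killed witnesses, and nothing in your construction produces such a $q$. So the equivalence ``constrainable iff some witness survives'' is exactly the step that is missing, and it is the heart of the theorem.

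The paper's construction sidesteps both problems by inverting the mechanism and keeping the order low: it takes $p_e(Y)=\del Y-a_e(Y^3-Y^2)$ of order $1$ over $K_0\cong\Q\la t_0,t_1,\ldots\ra$ and, when $W_e$ grows, adjoins nonconstant zeros $z_{e,i}$ of $p_e$ \emph{directly into} $K$. Distinct elements of $K$ are automatically non-isomorphic over $K$, and any candidate constraint must have order $0$, hence excludes only finitely many zeros; so infinitely many zeros in $K$ (i.e., $e\in\Inf$) makes $p_e$ unconstrainable, while finitely many allows the explicit constraint $Y(Y-1)(Y-z_{e,0})\cdots(Y-z_{e,n-1})$. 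A Rosenlicht-type theorem (via Marker--Messmer--Pillay) gives the algebraic independence of the $z_{e,i}$, which is what controls exactly which zeros end up in $K$. Your items (i)--(iv) are precisely the content you have deferred; as written, the proposal does not establish $\Sigma^0_2$-hardness, and the witness-killing route would need a genuinely new argument (or a different reduction, e.g.\ to finiteness of $W_e$ as in the paper) to close the gap.
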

\begin{proof}
This proof evolved out of a suggestion by David Marker.
The key to the $\Pi^0_2$-completeness is Theorem 6.2
from \cite[p.\ 73]{MMP06}, which introduces the differential polynomials
$$p_e(Y) = \del Y-t_e{\left(Y^3-Y^2\right)}$$ we will use,
over a ground field $K_0\cong\Q\la t_0,t_1,\ldots\ra$
in which the elements $t_e$ form a differentially
independent set over $\Q$.  (That is, no tuple
$\la t_0,\ldots,t_n\ra$ is a zero of any nonzero
differential polynomial over $\Q$.)  Now $p_e(Y)$
certainly will have zeroes in the differential closure of $K_0$
(although its only zeroes in $K_0$ itself are the trivial ones $0$ and $1$),
but, imitating the proof of Corollary 6.3 in \cite{MMP06}
with $f(Y)=Y^3-Y^2$, we see that the set of its nonconstant zeroes
is algebraically independent over $\Q$.

Moreover, in an extension $K$
of $K_0$ such as we shall build, $p_e$ is constrainable
iff $K$ contains only finitely many zeros of $p_e$.
This condition is readily exploited to show $\Sigma^0_2$-completeness
of constrainability, which is to say, $\Pi^0_2$-completeness of $U_K$,
using requirements:
$$  \R_e:~~p_e\text{~has infinitely many zeroes in~}K \iff e\in\Inf.$$
These will show that the $\Pi^0_2$-complete set $\Inf=\set{e}{|W_e|=\infty}$
is $1$-reducible to $U_K$.  Since $U_K$ is already known
to be $\Pi^0_2$, it will therefore be $\Pi^0_2$-complete,
and many computable model theorists could fill in the rest
of these details immediately.

The differential field $K$ is built as a c.e.\ subfield of the differential
closure $\hat{K_0}$, in stages,
as a computably enumerable differential subfield of the (computable)
differential closure of $\Q\la t_0,t_1,\ldots\ra$.
Each requirement $\R_e$ will be eligible at infinitely many stages,
at each of which it will have the opportunity to add more elements
to $K$ if it needs to.

{\bf At stage $0$}, we apply Harrington's Theorem \ref{theorem:Harrington}
to (a computable presentation of) the differential field
$\Q\la t_0,t_1,\ldots\ra$, as described above,
with the $t_e$ all differentially independent over $\Q$.
This yields a differential Rabin embedding $f$
of $\Q\la t_0,t_1,\ldots\ra$ into a computable presentation $L$
of the differential closure of a c.e.\ subfield $K_0$
which is the image of $\Q\la t_0,t_1,\ldots\ra$
under $f$.  We may assume that the
function $e\mapsto t_e$ in the original presentation of
$\Q\la t_0,t_1,\ldots\ra$ was computable, and thus that
$e\mapsto f(t_e)$ is computable as well.  From here on,
we deal only with $K_0$, writing
$$a_e=f(t_e)\in K_0$$
for the image in $K_0$ of each differential transcendental $t_e$.
Within $L$, we can now name the nonconstant zeroes of the polynomials
$$p_e(Y)=\del Y-a_e{\left(Y^3-Y^2\right)}.$$  There must be infinitely many such zeroes,
since $L$ is differentially closed and $p_e$ has order $1$, and
each list $$\{ z_{e,0} < z_{e,1} < z_{e,2}<\ldots\}$$ of all
nonconstant zeroes of $p_e$ in $L$ is computable, uniformly in $e$.

{\bf We now move to stage $s+1$.}  Here we use the convention
that in our enumeration of all the c.e.\ sets $W_e$, for each $s$,
there is a unique pair $(e,x)$ for which $x\in W_{e,s+1}-W_{e,s}$
(that is, for which $x$ enters $W_e$ at stage $s+1$).  Of course,
$W_{e,0}=\emptyset$ for every $e$.  So, at the stage $s+1$,
we find the unique corresponding pair $(e,x)$, choose $i=|W_{e,s}|$,
and set
$$K_{s+1} = K_s(z_{e,i})\subset L.$$
Notice that since $$\del z_{e,i}=a_e{\left(z_{e,i}^3-z_{e,i}^2\right)},$$
this $K_{s+1}$ is closed under $\del$, hence is a differential subfield of $L$.
Lemma \ref{lemma:indep} will show that
$z_{e,i}$ is transcendental over $K_s$ as a field
(even though it is constrained over $K_s$ as a differential field),
making $K_{s+1}$ a purely transcendental field extension.

This completes the construction, and the differential field $K$
{\bf is the union of the fields} $K_s$ defined at each stage.
Thus $K$ is a c.e.\ subfield of $L$.  (A computable presentation of $K$
with domain $\omega$ may readily be found, since the infinite c.e.\ set $K$
is the image of $\omega$ under some injective total computable function $f$.
Just pull the differential field operations from $K$ back to $\omega$ via $f$.)

\begin{lemma}
\label{lemma:indep}
The set $\set{z_{e,i}}{e,i\in\omega}$ is algebraically independent
over $K_0$.
\end{lemma}
\begin{proof}
Each $z_{e,i}$ by itself is transcendental over $K_0$.
A theorem of Rosenlicht which appears as \cite[Theorem 6.2, p.\ 73]{MMP06},
along with Example 2 there,
shows that if $z=z_{e,i}$ and $\ztilde=z_{j,k}$ are algebraically dependent over $K_0$,
then $$a_e z^2=a_j\ztilde^2.$$  Applying $\del$ to both sides and using the equations
$$\del z=a_e{\left(z^3-z^2\right)}\quad \text{and}\quad \del\ztilde=a_j{\left(\ztilde^3-\ztilde^2\right)},$$
we derive a second equation
$$ z^2\del a_e + 2a_e^2{\left(z^4-z^3\right)} = \ztilde^2\del a_j + 2a_j^2{\left(\ztilde^4-\ztilde^3\right)}.$$
Applying $\del$ and substituting for $\del z$ and $\del\ztilde$ again yields a third polynomial relation
on $z$, $\ztilde$, $a_e$, $\del a_e$, $\del^2 a_e$, $a_j$, $\del a_j$, and $\del^2 a_j$.
But now these three distinct algebraic relations show that the field
$$\Q{\left(z,\ztilde,a_e,\del a_e,\del^2 a_e,a_j,\del a_j,\del^2 a_j\right)}$$
has transcendence degree at most $5$ over $\Q$.  Assuming that $e\neq j$,
this contradicts the differential independence of $a_e$ and $a_j$.
If $e=j$, then the equation $a_e z^2=a_j\ztilde^2$ actually showed
that $\ztilde=\pm z$, and the only way for $z$ and $-z$ both to be zeroes
of $p_e$ is for $z=0=-z$. 
Thus, every set $\{ z_{e,i},z_{j,k}\}$ of two distinct elements
is algebraically independent over $K_0$.

One then argues by induction.  Let $S$ be any subset of $\set{z_{e,i}}{e,i\in\omega}$
of size $n+1\geq 3$.
Form $$S'=S-\{ z_{e,i},z_{j,k}\}$$ by deleting any two
elements from $S$.  Then each of $z_{e,i}$ and $z_{j,k}$ is a transcendental
over $K_0(S')$, by inductive hypothesis, and the argument from the preceding paragraph,
with $K_0(S')$ and $\Q(S')$ in place of $K_0$ and $\Q$, shows that
$\{ z_{e,i},z_{j,k}\}$ is algebraically independent over $K_0(S')$, making $S$
algebraically independent over $K_0$.
\end{proof}

\begin{lemma}
\label{lemma:completeness}
In the construction above, the nonconstrainability set $U_K$
is $\Pi^0_2$-complete.
\end{lemma}
\begin{proof}
If $W_e$ is finite, then by our construction, there are only finitely many stages
at which any zero $z_{e,i}$ of the polynomial $p_e(Y)$ is added to $K$.
It follows from Lemma \ref{lemma:indep} that the only elements
$z_{e,i}$ in $K$ are the ones we specifically enumerated into $K_{s+1}$
at some stage $s+1$:  Lemma \ref{lemma:indep} shows that no finite set
of such elements $z_{e,i}$ generates any $z_{e',i'}$ as a field,
except those already in the finite set, and we remarker in the proof
that the field generated by this set is closed under differentiation.
Therefore, an easy induction on $s$ shows that, for every $e$ and $s$,
$p_e$ has exactly $|W_{e,s}|$ nontrivial zeroes in $K_s$.
But now, if $|W_e|=n$ is finite, then
$$(p_e(Y), Y(Y-1)(Y-z_{e,0})\cdot\ldots\cdot (Y-z_{e,n-1}))\notin T_K,$$
and so $p_e\notin U_K$.

Conversely, if $W_e$ is infinite, then $p_e(Y)$ has infinitely many zeroes in $K$,
since a new one is added at each stage at which
$W_e$ receives a new element.  But then there is no $q\in\KY$
with $(p_e,q)\notin T_K$:  such a $q$ would have to have order $0$,
since $p_e$ has order $1$,
and hence $q$ would have only finitely many zeroes
in $K$.  This would leave infinitely many $z_{e,i}$ in $K$ satisfying $(p_e,q)$,
yet clearly $z_{e,i}\not\cong_K z_{e,j}$ for all $i\neq j$ with
either $z_{e,i}$ or $z_{e,j}$ in $K$, since no isomorphism
fixing $K$ pointwise could map $z_{e,i}$ to any element except itself.
Thus $$p_e\in U_K\ \ \iff\ \ e\in\Inf,$$ and the computable injective
function $e\mapsto p_e$ is a $1$-reduction from $\Inf$ to $U_K$,
proving the lemma.  (Recall that Definition \ref{definition:constrainability}
shows that $U_K$ is a $\Pi^0_2$ set.)
\end{proof}
This completes the proof of Theorem \ref{theorem:Sigma2}.
\end{proof}

The usual examples of nonconstrainable polynomials
are along the lines of $p(Y)=\del Y$.  Based on just this, one
might wonder whether the nonconstrainable polynomials in $\KY$
are exactly those of the form $\del\ptilde$
for some $\ptilde\in\KY$.  In one direction, this holds true:
polynomials $\del\ptilde$ are nonconstrainable.
To see this, note that for every constant $c\in K$
and every $q\in\KY$ of order $<\ord (\del\ptilde)$
the polynomial $(\ptilde(Y)-c)$ would have order $>\ord(q)$,
so there would exist an $x_c\in\Khat$ satisfying
$$\ptilde(x_c)-c=0\neq q(x_c),$$ by Blum's axioms.
Every such $x_c$ would satisfy $(\del\ptilde,q)$,
yet for constants $c\neq d$, we would have
$$\ptilde(x_c)=c\neq d=\ptilde(x_d),$$ proving that
$(\del \ptilde,q)$ is not a constrained pair.

However, Theorem \ref{theorem:Sigma2} shows that nonconstrainability
is not always equivalent being the derivative of another polynomial.
\begin{corollary}
\label{cor:notSigma1}
There exist a computable differential field $K$ and a
differential polynomial $p\in\KY$ such that,
for all $\ptilde\in\KY$, $p\neq \del \ptilde$,
yet $p_0$ is not constrainable in $\KY$.
\end{corollary}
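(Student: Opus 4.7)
The plan is to derive the corollary from Theorem \ref{theorem:Sigma2} by a simple complexity argument. Take $K$ to be the computable differential field constructed in Theorem \ref{theorem:Sigma2}, so that the unconstrainability set $U_K$ is $\Pi^0_2$-complete and therefore, in particular, not $\Sigma^0_1$.

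First I would observe that the set of derivatives
$$D=\set{\del\ptilde}{\ptilde\in\KY}$$
is computably enumerable. Since $K$ is a computable differential field, $\KY$ is a computable ring, and the derivation $\del$ extends computably to $\KY$ via the Leibniz rule, so $D$ is the image of the total computable map $\ptilde\mapsto\del\ptilde$.

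Next, the discussion immediately preceding the corollary already establishes the inclusion $D\subseteq U_K$: for any $\ptilde\in\KY$ with $\ord(\ptilde)\geq 0$ and any $q\in\KY$ with $\ord(q)<\ord(\del\ptilde)$, Blum's axioms supply, for each constant $c\in K$, an element $x_c\in\Khat$ with $\ptilde(x_c)=c$ and $q(x_c)\neq 0$, so that distinct constants $c\neq d$ produce elements $x_c,x_d$ satisfying the pair $(\del\ptilde,q)$ yet with $\ptilde(x_c)\neq\ptilde(x_d)$, witnessing $(\del\ptilde,q)\in T_K$; the remaining cases (where $\del\ptilde$ is a nonzero element of $K$) are trivially unconstrainable. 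Thus $\del\ptilde\in U_K$ always.

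Now suppose, for contradiction, that in this $K$ every $p\in U_K$ were of the form $\del\ptilde$ for some $\ptilde\in\KY$. Then $U_K\subseteq D$, and combined with the inclusion $D\subseteq U_K$ from the previous paragraph, this gives $U_K = D$. Since $D$ is c.e., this would place $U_K\in\Sigma^0_1$, contradicting the $\Pi^0_2$-completeness of $U_K$ from Theorem \ref{theorem:Sigma2}. Hence some $p\in U_K$ is not of the form $\del\ptilde$ for any $\ptilde\in\KY$, as required. There is essentially no obstacle here beyond invoking Theorem \ref{theorem:Sigma2}; the entire force of the corollary is a trivial consequence of the complexity gap between $\Sigma^0_1$ and $\Pi^0_2$-complete sets, once the completeness result is in hand.
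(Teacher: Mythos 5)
Your proof is correct and follows essentially the same route as the paper's: both use the $K$ of Theorem \ref{theorem:Sigma2}, the previously established fact that every derivative $\del\ptilde$ is unconstrainable, and the observation that "being a derivative" is a $\Sigma^0_1$ condition, which is incompatible with the $\Pi^0_2$-completeness of $U_K$ unless the containment fails in the other direction. Your write-up merely spells out the c.e.-ness of the set of derivatives and the contradiction more explicitly than the paper does.
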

\begin{proof}
In the differential field $K$ built in Theorem \ref{theorem:Sigma2},
nonconstrainability cannot be equivalent to being
a derivative, since nonconstrainability cannot be defined by the
$\Sigma^0_1$ condition $$(\exists \ptilde\in\KY)\del \ptilde=p.$$
We argued above that every $\del \ptilde$ with $\ptilde\in\KY$
is nonconstrainable, so the opposite containment relation
must fail.  That is, there must exist some nonconstrainable
$p$ which is not a derivative.
\end{proof}

In fact, we can say specifically that in the field $K$ constructed
in Theorem \ref{theorem:Sigma2}, those $p_e$ which turned
out to be nonconstrainable are specific instances of Corollary \ref{cor:notSigma1}.
If such a $p_e$ were of the form $\del\ptilde$, then
$\ptilde$ would have appeared in $\KY$ at some finite stage $s$,
and thus $p_e$ would have been nonconstrainable
even in the differential subfield $K_s$ generated by
the elements enumerated by stage $s$.  But this did not happen:
each $p_e$ was constrainable within each $K_s$,
even though certain of them became nonconstrainable
in the larger field $K$.

Finally, we note that although $U_K$ is $\Pi^0_2$-complete for this
particular $K$, there may still be many other computable differential fields
$L$ for which $U_L$ is not this complex.  Theorem \ref{theorem:Sigma2} shows
that no definition of the constrainability set can be any simpler than
$\Pi^0_2$, but in specific cases a definition equivalent to constrainability
might have lower complexity.  In particular, for a constant differential
field $F$, and especially for the constant field $\Q$ itself, one suspects
that the complexity may be lower.  These questions remain open.

\section{Extension by Constrained Elements}
\label{sec:Kronecker}

Since the constraint set $T_K$ plays the same role for a differential
field $K$ that the splitting set $S_F$ plays for an algebraic field $F$,
it is natural to ask which results about $S_F$ carry over to $T_K$.
The most basic results about $S_F$ are those of
Theorem \ref{theorem:Kronecker}, given by Kronecker
in \cite{K1882}:  that $S_{\Q}$ is computable, and that
the splitting set $S_{F(x)}$ for a computable extension $F(x)$
of a computable field $F$ is computable from an oracle for $S_F$.

Moreover, the process of computing $S_{F(x)}$ from $S_F$
is uniform in $F$ and in one single piece of information about $x$,
namely, whether $x$ is algebraic or transcendental over $F$.
(It is often said that the process is uniform in the minimal polynomial
of $x$ over $F$, but if we know that $x$ is algebraic, then we can find
its minimal polynomial using the $S_F$ oracle.)

It follows that
every finitely generated computable field of characteristic $0$
has computable splitting set,
and this fact is extremely useful when one tries to build embeddings
or isomorphisms among computable fields.  The same holds in characteristic $p>0$,
since the splitting set of a finite field such as $\F_p$ is obviously computable.

In this section, we investigate the analogue of this result
for constrained differential field extensions.
If $K$ is a computable differential field, with
computable extension $K\la z\ra$, must $T_{K\la z\ra}\leq_T T_K$? We
will show that when $z$ is constrained over $K$, the answer
is always positive, uniformly in $z$, provided only that $K$ is nonconstant.
For constant $K$, the question remains open.

In the previous section we exploited the fact that if $K\subseteq L\subseteq\Khat$,
then $\Khat$ can fail to be (isomorphic over $L$ to) a differential closure of $L$, although
it must contain some subfield $\hat{L}\supseteq L$ which is a differential closure
of $L$.  The problem was that certain elements of $\Khat$ turned out not to be
constrainable over $L$.  When we consider a finitely generated differential field
extension $L=K\la z_0,\ldots,z_n\ra\subseteq\Khat$, this is no longer possible.
\begin{lemma}[{\cite[\S 4, Proposition~5]{KolchinConstrained}}]
\label{lemma:Kzhat}
Let $K$ be a differential field and $L$ a subfield  of any differential
closure $\Khat$ of $K$, with $L$ finitely differentially generated
as a differential field extension of $K$.  Then
$\Khat$ must also be a differential closure
of its differential subfield $L$.
\end{lemma}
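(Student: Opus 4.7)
The plan is to exploit the model-theoretic characterization of $\Khat$ as the prime model of $\mathbf{DCF}_0\cup\Delta(K)$, combined with the $\omega$-stability of $\mathbf{DCF}_0$, which guarantees that prime and atomic models coincide. It therefore suffices to show that $\Khat$ is \emph{atomic} over $L$, i.e.\ that every finite tuple from $\Khat$ has an isolated (equivalently, principal) type over $L$. Applied to singletons, this means every $x\in\Khat$ satisfies a constrained pair over $L$, so $\Khat$ is constrained over $L$ and hence is a differential closure of $L$ in the sense of Theorem \ref{theorem:cclosure}.

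First I would fix a finite tuple $\bar w=(w_1,\ldots,w_n)\in\Khat^n$ that differentially generates $L$ over $K$. Given an arbitrary tuple $\bar a\in\Khat^m$, the concatenation $(\bar a,\bar w)$ still lies in $\Khat$, so by atomicity of $\Khat$ over $K$ there exists a formula $\phi(\bar x,\bar y)$ with parameters from $K$ that isolates the type of $(\bar a,\bar w)$ over $K$.

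Next I would argue that $\phi(\bar x,\bar w)$, regarded now as a formula with parameters from $L$, isolates the type of $\bar a$ over $L$. The key observation is that every formula $\psi(\bar x)$ with $L$-parameters can be rewritten as $\psi'(\bar x,\bar w)$ for some formula $\psi'$ with $K$-parameters, because each element of $L$ has the form $P(\bar w)/Q(\bar w)$ for differential polynomials $P,Q$ over $K$ with $Q(\bar w)\neq 0$ in $\Khat$; clearing such (nonvanishing) denominators produces an equivalent $K$-formula after substituting $\bar w$ for $\bar y$. Isolation of $(\bar a,\bar w)$ over $K$ by $\phi$ then forces $\phi(\bar x,\bar w)$ to decide every such $\psi'$, yielding the required isolation of $\bar a$ over $L$.

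The main technical obstacle will be the careful bookkeeping of this parameter-absorption step, including the handling of the nonvanishing side conditions $Q(\bar w)\neq 0$ and the verification that quantifiers and Boolean combinations transport faithfully under the substitution $\bar y\mapsto\bar w$. Once atomicity of $\Khat$ over $L$ is established, $\omega$-stability of $\mathbf{DCF}_0$ guarantees that prime and atomic coincide, so $\Khat$ is the prime model of $\mathbf{DCF}_0\cup\Delta(L)$, finishing the proof.
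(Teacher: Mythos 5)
Your argument is essentially correct, but it follows a different route from the paper: the paper offers no proof at all here, simply citing Kolchin's differential-algebraic result on constrained extensions (\cite[\S 4, Proposition~5]{KolchinConstrained}), whereas you give the standard model-theoretic proof. Your key steps are all sound: the differential closure is atomic over $K$ (prime models over sets in a totally transcendental theory are atomic, or one can note that the usual construction of $\Khat$ is constructible, hence atomic); if $\tp(\bar a,\bar w/K)$ is isolated by $\phi(\bar x,\bar y)$ then $\phi(\bar x,\bar w)$ isolates $\tp(\bar a/K\cup\{\bar w\})$; and parameters from $L=K\la\bar w\ra$ are differential rational functions of $\bar w$ over $K$, so formulas over $L$ can be rewritten over $K\cup\{\bar w\}$ (absorbing the nonvanishing denominators is routine). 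Applied to singletons, together with the fact that principal $1$-types over a differential field are generated by constrained-pair formulas, this gives exactly the paper's definition of differential closure in Theorem \ref{theorem:cclosure}, so you are already done at that point. Your closing sentence, that atomicity plus $\omega$-stability makes $\Khat$ the prime model of $\DCF\cup\Delta(L)$, is unnecessary and is the one place where care is needed: ``atomic implies prime'' is automatic only for countable $L$ (in general one needs constructibility or the absence of uncountable indiscernible sets), though in the paper's computable setting everything is countable, and in any case the constrained-pair formulation you already established suffices. The trade-off between the two approaches: Kolchin's citation keeps the paper self-contained within differential algebra and covers arbitrary cardinalities with no model-theoretic machinery, while your proof is shorter and makes transparent why finite differential generation is exactly what is needed (one absorbs the finite tuple of generators into the type).
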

Lemma \ref{lemma:Kzhat} shows that every $x\in\Khat$ satisfies
a constrained pair over $K\la z\ra$, but does not specify the two polynomials
which constitute that constrained pair.  Below we will prove Theorem \ref{theorem:fgalg},
stating that if we can recognize constrained pairs over $K$, then we can also recognize them
over $K\la z\ra$.  Once this is known, then given an oracle for $T_K$,
we will be able to identify the constrained pair over $K\la z\ra$ satisfied
by $x$, simply by searching for it.  However, there is a good deal
of work still to be done before we can prove Theorem \ref{theorem:fgalg}.

\begin{lemma}
\label{lemma:S_K}
For a computable differential field $K$ with Rabin embedding $g:K\to\Khat$,
and an arbitrary $z\in\Khat$, the splitting set $S_{K\la z\ra}$
is computable in an oracle for $T_K$, uniformly in $z$.
\end{lemma}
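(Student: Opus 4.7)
The plan is to obtain $S_{K\la z\ra}$ from an oracle for $T_K$ by combining Theorem \ref{theorem:constraints} with the uniform version of Kronecker's Theorem. First I would use the $T_K$-oracle to compute both $S_K$ and the order function $\ord_K$ on $\Khat$, as guaranteed by Theorem \ref{theorem:constraints}. Setting $r=\ord_K(z)$, I then search for the minimal differential polynomial $p$ of $z$ over $K$: with $S_K$ in hand, Theorem \ref{theorem:FandJ} lets me decide irreducibility of polynomials in $\KY$, and I simply enumerate monic irreducible $p\in\KY$ of order $r$ until one is found with $p(z)=0$ in $\Khat$. (Fact \ref{fact:order} guarantees uniqueness of this $p$.)

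Next I would verify that $K\la z\ra$, viewed as a plain field, coincides with the finitely generated extension $K(z,\del z,\ldots,\del^r z)$. By Fact \ref{fact:order}, the tuple $(z,\del z,\ldots,\del^{r-1}z)$ is algebraically independent over $K$, and $\del^r z$ is algebraic over $K(z,\del z,\ldots,\del^{r-1}z)$. To see that no further generators are needed, observe that the partial derivative $\partial p/\partial(\del^r Y)$ is a nonzero element of $\KY$ of strictly lower rank than $p$, hence by minimality of $p$ it cannot vanish at $(z,\del z,\ldots,\del^r z)$. Applying $\del$ to the relation $p(z,\ldots,\del^r z)=0$ then yields a linear equation for $\del^{r+1}z$ whose coefficient is $(\partial p/\partial(\del^r Y))(z,\ldots,\del^r z)\neq 0$, so $\del^{r+1}z$ is a rational expression in $z,\del z,\ldots,\del^r z$ with coefficients in $K$. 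Iterating, every $\del^{r+j}z$ lies in this same field.

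Finally, I would invoke Theorem \ref{theorem:Kronecker} in its uniform form. Adjoining $z,\del z,\ldots,\del^{r-1}z$ to $K$ one at a time gives a chain of purely transcendental single-element extensions, so $r$ applications of part (iii) produce $S_{K(z,\del z,\ldots,\del^{r-1}z)}$ computably from $S_K$. A final algebraic adjunction of $\del^r z$, whose minimal polynomial over $K(z,\ldots,\del^{r-1}z)$ is the normalization of $p(z,\ldots,\del^{r-1}z,Y)$ (irreducibility following from Gauss's lemma applied to $p$ as an irreducible element of $K[Y,\del Y,\ldots,\del^r Y]$, combined with the transcendence of $(z,\ldots,\del^{r-1}z)$ over $K$), yields $S_{K\la z\ra}$ via part (ii). The entire procedure is uniform in $z$: the data $(r,p)$ and the transcendence/algebraicity status of each adjoined element are computed uniformly from the $T_K$-oracle and $z$, and the reductions from Kronecker's Theorem are themselves uniform in that data.

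The main obstacle, as I see it, is the second paragraph, namely the clean identification of $K\la z\ra$ as the explicit finitely generated field extension $K(z,\del z,\ldots,\del^r z)$; once this is in hand, the successive application of Kronecker's Theorem is routine. The edge case $r=0$, where $z$ is already algebraic over $K$ as a plain field with minimal polynomial $p(Y)\in K[Y]$, is subsumed by a single application of part (ii).
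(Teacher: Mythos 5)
Your proposal is correct and follows essentially the same route as the paper: identify $K\la z\ra$ as the field $K(z,\del z,\ldots,\del^{r-1}z)[\del^r z]$, with the minimal differential polynomial $p$ of $z$ (found via the $T_K$-oracle) supplying the minimal algebraic polynomial of $\del^r z$ over the purely transcendental base, and then apply parts (iii) and (ii) of Kronecker's Theorem together with $S_K\leq_T T_K$ from Theorem \ref{theorem:constraints}. The only differences are cosmetic: the paper locates $p$ by searching for a constrained pair $(p,q)\in\TKbar$ satisfied by $z$ rather than via $\ord_K$, and it takes for granted the separant/Gauss's-lemma facts that you verify explicitly.
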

\begin{proof}
With the oracle $T_K$, we can find a constrained pair $(p,q)\in\TKbar$
satisfied by $z$.  Then, with $r=\ord(p)$, the differential field
$K\la z\ra$ is the field $$K{\left(z,\del z,\ldots \del^{r-1}z\right)}[\del^r z],$$
 and $p$ gives the minimal polynomial of $\del^r z$ over
the purely transcendental field extension  $K{\left(z,\del z,\ldots \del^{r-1}z\right)}$.
With this information, Kronecker's Theorem allows us to compute
the splitting set of $K\la z\ra$ from $S_K$.
\end{proof}
The same holds when $z$ is differentially transcendental over $K$:
then $K\la z\ra$ is just the field $K{\left(z,\del z,\del^2 z,\ldots\right)}$,
with computable transcendence basis $\set{\del^i z}{i\in\omega}$,
hence has a splitting algorithm.  However, the uniformity of the result
fails when we do not know whether $z$ is constrained or not.

In what follows, for $p \in \KY$, $[p]$ is the least differential
ideal in $\KY$ containing $p$. Let $r = \ord (p)$. Recall that the initial
$I_p$ of $p$ is defined as the leading coefficient
when $p$ is expressed as a polynomial in $\del^rY$ (say of degree $d$)
with coefficients in $K(Y,\ldots,\del^{r-1}Y)$, so that
$p = I_p (\delta^r Y)^d + \cdots$.
Let
\label{definition:hp}
$$h_p = I_p\cdot s_p,\text{~~~~where~}s_p = \frac{\partial p}{\partial
(\delta^r Y)}.$$ For example, if $p = Y(\del Y)^2 + \del Y+1$, then
$h_p = Y\cdot(2Y\del Y+1)$. For $J\subset \KY$, 
we define the \label{colonideal}\emph{colon ideal} $J:h_p^\infty$ by:
$$
J:h_p^\infty = \left\{f \in \KY\:|\: h_p^n\cdot f\in J \text{ for some } n \ge 0\right\}.
$$
It turns out that $[p]:h_p^\infty$  is a prime differential ideal if and only if
$(p):h_p^\infty$ is a prime ideal \cite[III.8, Lemma~6 and~IV.9, Lemma~2]{KolchinBook1973}.
Moreover, if $p$ is an irreducible polynomial, then $(p)$ is a prime ideal.
Therefore, in this case, $(p):h_p^\infty$ is prime as well. Thus, if $p$
is irreducible, then $[p]:h_p^\infty$ is a prime differential ideal.

\begin{lemma}
\label{lemma:Dima}
For every computable differential field $K$ and every differential
polynomial $p\in\KY$, the relation of equivalence mod $[p]:h_p^\infty$ is computable in $\KY$, uniformly
in $K$ and $p$, and so there is a computable presentation
of the differential ring $\KY\big/[p]:h_p^\infty$.
\end{lemma}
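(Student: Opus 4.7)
The plan is to reduce decidability of equivalence modulo $[p]:h_p^\infty$ to membership in a finitely generated algebraic colon ideal, via Ritt--Kolchin pseudoreduction, and then to invoke effective Gr\"obner basis theory over the computable field $K$. Given $f\in\KY$, the first step is to compute a partial pseudoreduction of $f$ modulo $p$: setting $r=\ord(p)$, each derivative $\del^j p$ for $j\ge 1$ has the form $s_p\cdot\del^{r+j}Y + u_j$ with $u_j\in K[Y,\del Y,\ldots,\del^{r+j-1}Y]$, so iterating from the highest derivative of $Y$ appearing in $f$ downward effectively produces $\tilde f\in K[Y,\del Y,\ldots,\del^r Y]$ and some $N\ge 0$ with $s_p^N\cdot f\equiv\tilde f\pmod{[p]}$. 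Since $s_p$ divides $h_p$ and $h_p$ is a non-zero-divisor modulo $[p]:h_p^\infty$, we have $f\in[p]:h_p^\infty$ iff $\tilde f\in[p]:h_p^\infty$.

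Next, I would invoke the single-polynomial case of Rosenfeld's lemma (a standard result in differential algebra, \cite{KolchinBook1973}): for any $\tilde f$ of order at most $r$,
$$\tilde f\in [p]:h_p^\infty\iff \tilde f\in(p):h_p^\infty,$$
where the right-hand side is the algebraic colon ideal in $K[Y,\del Y,\ldots,\del^r Y]$. Since $K$ is a computable field, Buchberger's algorithm computes Gr\"obner bases in this polynomial ring effectively, using only arithmetic in $K$; no splitting algorithm for $K$ is required. A generating set for the saturation $(p):h_p^\infty$ is obtained via the standard auxiliary-variable trick: compute a Gr\"obner basis of $(p,\,1-h_p z)$ in $K[Y,\del Y,\ldots,\del^r Y,z]$ under an elimination order, then intersect with $K[Y,\del Y,\ldots,\del^r Y]$. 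Membership of $\tilde f$ in the resulting ideal is decidable by normal-form reduction, and every step of the procedure is uniform in presentations of $K$ and in $p$.

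Once the equivalence is decidable, the computable presentation of the quotient is routine: enumerate $\KY$ effectively, assign each polynomial to the least prior representative to which it is equivalent, and compute ring operations and the derivation on representatives. The fact that $\del$ descends follows because $[p]:h_p^\infty$ is a differential ideal: from $h_p^n f\in[p]$ one verifies $h_p^{n+1}\del f = h_p\cdot\del(h_p^n f) - n\cdot\del(h_p)\cdot h_p^n f\in[p]$, since both summands lie in $[p]$. The principal obstacle is the correct invocation of Rosenfeld's lemma for a single polynomial: one needs that partial pseudoreduction genuinely converts the differential colon-ideal statement into an algebraic one for polynomials of order at most $r$. For a single autoreduced polynomial this is classical and essentially forced by the fact that the only extra factors introduced during partial reduction are powers of $s_p$, which are absorbed into $h_p^\infty$. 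With that ingredient in hand, the remaining work is purely algebraic and well-understood.
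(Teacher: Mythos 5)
Your proposal is correct, and its key decidability step follows a genuinely different route from the paper's. The paper also starts from Ritt--Kolchin pseudo-division, but it pushes the reduction all the way down in rank (order and degree in $\del^{r}Y$), and then decides equivalence via the reduced forms, using the fact that no nonzero polynomial reduced with respect to $p$ lies in $[p]:h_p^\infty$; this is the characteristic-set property of $\{p\}$, available because in the intended applications $p$ is irreducible, so $[p]:h_p^\infty$ is prime with $\{p\}$ as a characteristic set, and the reduced polynomials then serve as the domain of the presentation, with no Gr\"obner machinery at all. You instead stop at partial reduction (order $\le r$), invoke Rosenfeld's lemma to turn the differential question into membership in the algebraic saturation $(p):h_p^\infty\subseteq K[Y,\del Y,\ldots,\del^r Y]$, compute that saturation effectively by Buchberger's algorithm with the auxiliary variable $1-h_p z$ and elimination, and test equivalence of $f_1,f_2$ by deciding membership of the difference $f_1-f_2$. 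What your version buys: it works verbatim for arbitrary $p$ (neither Rosenfeld's lemma for the singleton $\{p\}$, which is autoreduced and vacuously coherent, nor the Gr\"obner computation needs irreducibility), and deciding membership of the difference sidesteps a delicate point in the remainder-comparison approach, namely that pseudo-division only yields $h_p^{k}f\equiv g\pmod{[p]}$ rather than $f\equiv g\pmod{[p]:h_p^\infty}$, so equivalent inputs may pick up different multiplier powers and hence different remainders, and a residue class need not contain a reduced representative at all. What the paper's version buys: it stays entirely inside Kolchin's reduction calculus, requires no Gr\"obner bases, and (for irreducible $p$) identifies an explicit computable set of canonical representatives instead of the generic least-representative construction you use to assemble the quotient presentation. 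Your remaining ingredients are sound: saturation by $h_p=I_p\cdot s_p$ coincides with saturation by $\{I_p,s_p\}$, your verification that $[p]:h_p^\infty$ is a differential ideal is correct, and Buchberger's algorithm indeed needs only the computable field operations and equality test in $K$, uniformly in $p$ and the presentation of $K$.
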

\begin{proof}
The basic point of the proof is that we have an algorithm,
called differential pseudo-division, which takes
an $f\in\KY$ as input and produces $g\in\KY$ of lower rank than $p$, with
$$f -g \in [p]:h_p^\infty,$$
(Recall that ``lower rank'' means that either
$\ord(g)<r=\ord(p)$ or else $\ord(g)=r$ and $\del^r
Y$ has strictly lesser degree in $g$ than its degree $d$ in $p$.)
First, if $\ord(f)=s>r$, then $\del^{s-r}p$ has order $s$ and is
linear in $\del^s Y$ with ``leading coefficient'' $s_p$.  So, by
subtracting an appropriate multiple of $\del^sY$ from $s_p\cdot f$,
we get an $f_0$ with order $<s$ such that $$(s_p\cdot f-f_0)\in
[p].$$  Repeating this process produces an $f_n$ of order $\leq r$.
If its order is exactly $r$, then we reduce it by multiples of $p$
until the degree of the variable $\del^rY$ in $f_m$ is $<d$
multiplying by $h_p$ whenever needed. A general algorithm is described in
\cite[I.9]{KolchinBook1973}.

Of course, if $S$ is the set of all $f\in\KY$
for which either $\ord(f)<r$ or else $\ord(f)=r$ and $\del^r Y$
has degree $<d$ in $f$,
then no two distinct elements of $S$ can be equivalent mod $[p]:h_p^\infty$,
since $$[p]:h_p^\infty\cap S=\{ 0\}.$$  So two elements of $\KY$
are equivalent mod $[p]:h_p^\infty$ iff the above algorithm
on those elements gives the same output for both.
Moreover, $S$ is computable and is the domain of our
computable presentation of $\KY/[p]:h_p^\infty$, with addition, multiplication,
and differentiation computed exactly as in $\KY$, with the result
reduced modulo $[p]:h_p^\infty$ to an element of $S$
in the sense of the algorithm described above.
\end{proof}
\begin{corollary}
\label{cor:K<z>}
For every computable differential field $K$, every Rabin
embedding $g:K\to\Khat$, and every $z\in\Khat$,
the differential subfield $g(K)\la z\ra$ is computably presentable,
uniformly in the minimal differential polynomial $p$ of $z$ over $K$.
(In turn, $p$ can be computed from $z$ using a $D_K$ oracle.)
\end{corollary}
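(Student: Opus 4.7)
The plan is to realize $g(K)\la z\ra$ as the fraction field of the computable differential integral domain $R = \KY/[p]:h_p^\infty$ furnished by Lemma \ref{lemma:Dima}. Since $p$ is irreducible, the results cited from Kolchin's book ensure that $[p]:h_p^\infty$ is a prime differential ideal, so $R$ is a differential domain with a computable presentation uniform in $p$. The substitution $Y \mapsto z$ yields a differential ring homomorphism $\phi: \KY \to \Khat$ whose image is $g(K)\{z\}$ and whose kernel is $I_K(z) = \set{f \in \KY}{f(z) = 0}$. It suffices to show $\ker \phi = [p]:h_p^\infty$, for then $\phi$ factors through a differential isomorphism $R \cong g(K)\{z\}$, and passing to fraction fields yields $\Quot(R) \cong g(K)\la z\ra$.

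The central step, and the main obstacle in this argument, is the identification $[p]:h_p^\infty = I_K(z)$. For the forward inclusion, the key observation is that $h_p(z) \neq 0$: both the initial $I_p$ and the separant $s_p$ have strictly lower rank than $p$ (the former because its order is at most $r-1$, the latter because its degree in $\del^r Y$ is $d-1 < d$), and the minimality of $p$ among nonzero elements of $I_K(z)$, guaranteed by Fact \ref{fact:order}, forces both $I_p(z)$ and $s_p(z)$ to be nonzero. Consequently, whenever $h_p^n \cdot f \in [p] \subseteq I_K(z)$, evaluation at $z$ forces $f(z) = 0$. For the reverse inclusion, the pseudo-division algorithm recalled in the proof of Lemma \ref{lemma:Dima} reduces any $f \in \KY$ to some $g$ of rank strictly less than $p$ with $f - g \in [p]:h_p^\infty$. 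If $f \in I_K(z)$, then $g \in I_K(z)$ as well, and the minimality of $p$ forces $g = 0$, whence $f \in [p]:h_p^\infty$.

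Once the equality $\ker \phi = [p]:h_p^\infty$ is established, $g(K)\la z\ra$ is obtained as the fraction field of the computable integral domain $R$ by the standard uniform construction: elements are equivalence classes of pairs $(a,b) \in R \times (R \setminus \{0\})$ under the decidable relation $(a,b) \sim (c,d) \iff ad = bc$, with canonical representatives chosen computably. The derivation extends via the quotient rule, and uniformity in $p$ is inherited directly from Lemma \ref{lemma:Dima}.

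For the parenthetical remark, given $z \in \Khat$ and a $D_K$-oracle, one first determines the order $r$ of $z$ as the least $r$ such that $\{z, \del z, \ldots, \del^r z\}$ is algebraically dependent over $g(K)$. Since the condition $f(z) = 0$ is $\Sigma^0_1$ in the computable field $\Khat$, one then searches through nonzero polynomials in $K[Y, \del Y, \ldots, \del^r Y]$ in increasing order of rank until one vanishing at $z$ is found; after an appropriate normalization, this polynomial is the minimal differential polynomial $p$.
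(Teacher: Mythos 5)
Your main argument is correct and is essentially the paper's proof: the paper also presents $g(K)\la z\ra$ as $\Quot(\KY/[p]:h_p^\infty)$ with $z$ corresponding to the class of $Y$, invoking Lemma \ref{lemma:Dima} for the computable presentation; where the paper justifies the identification of the kernel by citing that the irreducible $p$ is a characteristic set of the prime ideal $\ker(\KY\to K\la z\ra)$ (so that this ideal equals $[p]:h_p^\infty$), you verify $\ker\phi=[p]:h_p^\infty$ directly from Fact \ref{fact:order} (giving $I_p(z)\neq 0\neq s_p(z)$, hence $h_p(z)\neq 0$) together with pseudo-division. That substitution is sound and makes the argument more self-contained; the uniformity in $p$ and the fraction-field step are handled as in the paper.

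The gap is in your justification of the parenthetical claim that $p$ is computable from $z$ and a $D_K$-oracle. First, ``search through nonzero polynomials \ldots in increasing order of rank until one vanishing at $z$ is found'' is not an effective procedure: each rank class contains infinitely many polynomials (coefficients range over all of $K$), so the search must be dovetailed, and then the first relation found need not have minimal rank. Second, and more seriously, even a \emph{monic relation of minimal rank} need not be $p$: if $h\in K[Y,\ldots,\del^{r-1}Y]$ is nonconstant with $h(z)\neq 0$, then $p\cdot h$ has the same order $r$ and the same degree in $\del^r Y$ as $p$, vanishes at $z$, and after normalization is a different monic polynomial; so ``minimal rank plus normalization'' does not pin down $p$. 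Extracting $p$ from an arbitrary relation $f$ with $f(z)=0$ requires isolating the irreducible factor of $f$ that vanishes at $z$ (the relation ideal in $K[Y,\ldots,\del^rY]$ is principal and prime, generated by $p$), and recognizing irreducibility or performing this factorization is what Theorem \ref{theorem:FandJ} ties to the splitting set $S_K$, which is not obviously available from $D_K$ alone. So your last paragraph only shows that $D_K$ yields the order $r$ and \emph{some} nonzero relation of order $r$; the step from such a relation to the minimal differential polynomial is missing and needs a genuinely different idea (or an appeal to an oracle stronger than the one you use at that point).
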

\begin{proof}
$K\la z\ra$ is isomorphic to the differential field $\Quot(\KY/[p]:h_p^\infty)$,
with $z$ mapping to the equivalence class of $Y$.
In this case, the irreducible polynomial $p \in \KY$ forms a characteristic set of
the prime differential ideal $P := \ker (\KY \to K\la z \ra)$ \cite[I.10]{KolchinBook1973}.
\end{proof}
Of course, this corollary is clear for other reasons,
even without needing to know $p$.
$g(K)\la z\ra$ is a c.e.\ differential subfield of $\Khat$,
hence can easily be pulled back to a computable presentation.
The interesting point is that one cannot readily use the
subfield $g(K)\la z\ra$ to prove the uniformity
over $p$ in Lemma \ref{lemma:Dima}.
It is easy to find zeroes $z\in\Khat$ of the given $p$,
but unless $D_K$ is computable, one cannot decide
which of these zeroes, if any, is generic for $p$ -- that is,
which $z$ have minimal differential polynomial $p$.

Moreover, if $p$ were unconstrainable, then no $z$
at all would be generic for $p$ over $K$.  So the proof of
Lemma \ref{lemma:Dima} is essential to the study
of computable differential fields, particularly those for
which $D_K$ or $U_K$ is noncomputable.  (For computable fields $F$,
the analogous procedure works for all irreducible $p\in F[X]$,
so one only needs to be able to decide the splitting set $S_F$,
which is always c.e., whereas in general $U_K$ is only $\Sigma^0_2$.)

We will need the following lemma as well.
\begin{lemma}
\label{lemma:Delta1}
Let $\Khat$ be a computable differentially closed field.
Then it is computable, given arbitrary $m$, $n$, and coefficients for
differential polynomials $f_0,\ldots,f_m$ and $g_0,\ldots,g_n$
in $\Khat\{ Y\}$, whether $V(f_0,\ldots,f_m)\subseteq V(g_0,\ldots,g_n)$,
where these are the differential varieties in $\Khat$ defined by these polynomials.
Consequently, it is also uniformly computable whether
$V{\left(\fbar\right)}=V(\gbar)$, and whether containment or
equality holds between the radical differential ideals
$\sqrt{{\left[\fbar\right]}}$ and $\sqrt{[\gbar]}$.
\end{lemma}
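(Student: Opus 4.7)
The plan is to reduce the varietal containment question to the decidability of the elementary theory of $\Khat$. Observe that the containment $V(f_0,\ldots,f_m) \subseteq V(g_0,\ldots,g_n)$ holds in $\Khat$ exactly when the $\Pi^0_1$ sentence
$$\forall Y\,\left(\bigwedge_{i=0}^{m} f_i(Y)=0 \;\longrightarrow\; \bigwedge_{j=0}^{n} g_j(Y)=0\right)$$
is true in $\Khat$, with the coefficients of the $f_i$ and $g_j$ serving as parameters from $\Khat$. Since $\Khat$ is a computable structure modelling the complete theory $\mathbf{DCF}_0$, and $\mathbf{DCF}_0$ admits effective quantifier elimination, there is a uniform algorithm which, given any first-order formula in the language of differential rings with parameters from $\Khat$, returns an equivalent quantifier-free formula with the same parameters. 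Evaluating such a quantifier-free sentence is computable because the field and derivation operations of $\Khat$ are. This yields a decision procedure for varietal containment, uniform in $m$, $n$, and the coefficients.

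Once varietal containment is decidable, the equality $V(\fbar)=V(\gbar)$ follows by checking both inclusions. For the radical differential ideal statements, invoke the differential Nullstellensatz (see Kolchin \cite{KolchinBook1973}): in the differentially closed field $\Khat$, the set of differential polynomials vanishing on $V(S)$ is precisely $\sqrt{[S]}$. This yields the standard order-reversing correspondence
$$V(\fbar) \subseteq V(\gbar) \quad\Longleftrightarrow\quad \sqrt{[\gbar]} \subseteq \sqrt{[\fbar]},$$
reducing decidability of containment and equality between the radical differential ideals to the already-decided question about varieties.

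The only real content lies in the first step, and the feature to watch is \emph{uniformity}: effective quantifier elimination is performed by a single algorithm independent of the polynomials supplied, so the whole decision procedure is uniform in $m$, $n$, and all coefficients, as required. There is no further substantive obstacle---once one frames the query as a single $\Pi^0_1$ sentence over the computable differentially closed field $\Khat$, the lemma follows from effective quantifier elimination for $\mathbf{DCF}_0$, together with the Nullstellensatz to pass between the varietal and ideal-theoretic formulations.
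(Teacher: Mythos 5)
Your proposal is correct, but it develops the route that the paper only mentions in passing and then sets aside. You express the containment $V(f_0,\ldots,f_m)\subseteq V(g_0,\ldots,g_n)$ as a single universal sentence with parameters from $\Khat$ and invoke effective quantifier elimination for $\textbf{DCF}_0$ to decide it, then use the Differential Nullstellensatz only to transfer decidability to the radical differential ideal statements. The paper's written argument avoids the quantifier-elimination algorithm altogether: it observes that the containment $V{\left(\fbar\right)}\subseteq V(\gbar)$ is directly a $\Pi^0_1$ condition (a universal statement over the computable structure $\Khat$), while the Nullstellensatz-equivalent statement $\sqrt{[\gbar]}\subseteq\sqrt{{\left[\fbar\right]}}$ is $\Sigma^0_1$, since each $g_j\in\sqrt{{\left[\fbar\right]}}$ iff one can search for an exponent $k_j$ and a finite $\Khat\{Y\}$-linear combination of the $f_i$ and their derivatives equal to $g_j^{k_j}$; being both c.e.\ and co-c.e., the containment is computable. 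So in the paper the Nullstellensatz is the engine of the decision procedure itself, not just a bridge to the ideal-theoretic reformulation. Your approach buys brevity and immediate uniformity from one standard (but strong) effectivity fact about $\textbf{DCF}_0$; the paper's approach is more self-contained in computability-theoretic terms, exhibiting explicit existential and universal definitions and making transparent exactly which effective content is used. Both are valid proofs of the lemma, including the uniformity claims.
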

\begin{proof}
This follows from quantifier elimination for the theory $\textbf{DCF}_0$, but it can
also be seen as a pleasing application of the Differential Nullstellensatz,
which states that with $\Khat$ 
differentially closed, $V{\left(\fbar\right)}\subseteq V(\gbar)$
iff $\sqrt{[\gbar]}\subseteq\sqrt{{\left[\fbar\right]}}$.  (The analogous result over
algebraically closed fields is Hilbert's Nullstellensatz.)
So the statement about radical differential ideals will follow from
that about varieties.  

Moreover, the reversal between the two inclusions
quickly yields the lemma, since
$V(\fbar)\subseteq V(\gbar)$ is a universal condition, saying that
every common zero of all $f_i$ is also a zero of every $g_j$, while
$\sqrt{[\gbar]}\subseteq\sqrt{{\left[\fbar\right]}}$ is an existential
condition:  each of the (finitely many) $g_i$ in $\gbar$ lies
in $\sqrt{{\left[\fbar\right]}}$ iff there exist $k_j$ and a linear combination
(over $\Khat\{ Y\}$) of the $f_i$'s and their derivatives
such that the combination is equal to $g_j^{k_j}$.
Being defined uniformly by both an existential and a universal
condition, the containment is therefore computable.
\end{proof}

We now come to the differential analogue of Kronecker's Theorem,
for finitely generated constrained extensions.

\begin{theorem}
\label{theorem:fgalg}
Let $L$ be any computable ordinary differential field
with nontrivial derivation $\del$, and $K$ its image under
any differential Rabin embedding of $L$ into any $\Khat$.
Then for every $z\in\Khat$, the constraint set $\overline{T_{K\la z\ra}}$
is computable in an oracle for $T_K$.  Also, the constrainability set
$\overline{U_{K\la z\ra}}$ is computably enumerable relative
to $T_K$.  Both the computation of $T_{K\la z\ra}$
and the enumeration of $\overline{U_{K\la z\ra}}$
are uniform in $z$ and $T_K$.
\end{theorem}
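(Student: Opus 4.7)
The plan is to prove both halves by exhibiting $T_{K\la z\ra}$ and its complement as c.e.\ sets (the first needing no oracle, the second relative to $T_K$); the Turing-reduction of $T_{K\la z\ra}$ to $T_K$ then follows at once, and the $T_K$-c.e.\ enumeration of $\overline{U_{K\la z\ra}}$ drops out by projecting onto the first coordinate of $\overline{T_{K\la z\ra}}$. Throughout I work inside the computable $\Khat$, which by Lemma~\ref{lemma:Kzhat} simultaneously serves as a differential closure of $K\la z\ra$.

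The c.e.\ enumeration of $T_{K\la z\ra}$ itself is read directly off the second form of Definition~\ref{definition:constraint}: existential witnesses $x,y\in\Khat$ and $H\in K\la z\ra\{Y\}$ may be enumerated because $\Khat$ is computable and $K\la z\ra$ is c.e.\ within it, and the conditions that $x,y$ satisfy $(P,Q)$ and that $H(x)=0\neq H(y)$ are computable in $\Khat$. No oracle is needed.

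For the harder direction, I would certify membership in $\overline{T_{K\la z\ra}}$ as follows. First, use Lemma~\ref{lemma:S_K} (the splitting algorithm for $K\la z\ra$ is $T_K$-computable) to confirm $P$ is monic and algebraically irreducible over $K\la z\ra$, and check $\ord(Q)<\ord(P)$. Next, by Blum's axioms, locate $x\in\Khat$ with $P(x)=0\neq Q(x)$, and apply the effective Kolchin primitive element theorem (Theorem~\ref{theorem:Kolchin}, valid since $\del$ is nontrivial on $K$) to produce $w\in\Khat$ generating $K\la z,x\ra$ over $K$, together with explicit differential-rational expressions $z=R_z(w)$ and $x=R_x(w)$. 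Invoke Theorem~\ref{theorem:constraints} to compute, relative to $T_K$, the minimal differential polynomial $p$ of $z$ and the minimal differential polynomial $p^\ast$ of $w$ over $K$, and to search for a $q^\ast$ with $(p^\ast,q^\ast)\in\overline{T_K}$. Verify the order equation $\ord(P)=\ord(p^\ast)-\ord(p)$; a Gauss-lemma argument in $K\la z\ra(Y,\del Y,\ldots,\del^{\ord(P)-1}Y)[\del^{\ord(P)}Y]$ then forces $P$ to equal the minimal differential polynomial $P_x$ of $x$ over $K\la z\ra$, since the monic irreducible $P$ must be divisible by the monic $P_x$ of the same order.

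The critical step is certifying that our given $Q$, not merely some unknown $Q_x$ paired with $P$, yields a constraint. For this I would pull $q^\ast$ back to $Q^\ast(Y):=q^\ast(R(z,Y))\in K\la z\ra\{Y\}$ and apply Kolchin's differential pseudo-division modulo $[P]:h_P^\infty$, as in the proof of Lemma~\ref{lemma:Dima}, to reduce to $\widetilde Q\in K\la z\ra\{Y\}$ of rank strictly below $P$. The key claim is $(P,\widetilde Q)\in\overline{T_{K\la z\ra}}$: for any $y\in\Khat$ satisfying $(P,\widetilde Q)$, the element $w_y:=R(z,y)$ satisfies $p^\ast(w_y)=0$ (since $p^\ast$ is the eliminant over $K$ of the defining system for $w_x$, a system also met by $(z,y)$) and $q^\ast(w_y)=Q^\ast(y)\neq 0$, so by $(p^\ast,q^\ast)\in\overline{T_K}$ there is a $K$-isomorphism $\sigma\colon K\la w_y\ra\to K\la w_x\ra$ sending $w_y$ to $w_x$; automatically $\sigma(z)=\sigma(R_z(w_y))=R_z(w_x)=z$ and $\sigma(y)=R_x(w_x)=x$, so $\sigma$ witnesses $y\cong_{K\la z\ra}x$. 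Finally, apply Lemma~\ref{lemma:Delta1} in $\Khat$ to decide whether $(P,Q)$ and $(P,\widetilde Q)$ have the same $\Khat$-zero set; by Proposition~\ref{prop:theq} this is equivalent to $(P,Q)\in\overline{T_{K\la z\ra}}$. I expect the main obstacle to be precisely this final certification, whose delicate point is that the $K$-isomorphism $\sigma$ automatically fixes the ambient $z$, because $z$ is recovered from $w$ by the fixed differential-rational function $R_z$; everything else is effective bookkeeping around Kolchin's theorem, Theorem~\ref{theorem:constraints}, and Lemma~\ref{lemma:Delta1}, each invoked uniformly in $z$ and $T_K$.
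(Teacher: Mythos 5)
Your overall architecture is the same as the paper's: enumerate $T_{K\la z\ra}$ with no oracle, and certify membership in $\overline{T_{K\la z\ra}}$ relative to $T_K$ by finding a generic witness $x$, applying Kolchin's primitive element theorem to $K\la z,x\ra$, pulling a constraint on the primitive element back to a polynomial $\widetilde Q$ of lower rank than $P$, and finally comparing the zero sets of $(P,Q)$ and $(P,\widetilde Q)$ via Lemma \ref{lemma:Delta1}. Your replacement of the paper's $D_{K\la z\ra}$-test for genericity of $x$ by the order bookkeeping $\ord(P)=\ord(p^\ast)-\ord(p)$ is legitimate (additivity of transcendence degree plus the irreducibility argument does force $P$ to be the minimal differential polynomial of $x$ over $K\la z\ra$). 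The problem lies in the central certification step.

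Your $\widetilde Q$ is only the pseudo-remainder of (the numerator of) $q^\ast(R(z,Y))$ modulo $[P]:h_P^\infty$. For a point $y$ with $P(y)=0\neq\widetilde Q(y)$, the pseudo-division identity does give $h_P(y)\neq 0$ and the non-vanishing at $y$ of that one numerator, but nothing more. Nothing forces the denominators of $R(z,Y)$, of $R_z(R(z,Y))-z$, of $R_x(R(z,Y))-Y$, or of $p^\ast(R(z,Y))$ to be nonzero at $y$: a polynomial lying outside the prime ideal $[P]:h_P^\infty$ of $x$ can still vanish at $y$, because the vanishing ideal of $y$ may strictly contain that of $x$ (non-generic solutions lying on the general component). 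Consequently $w_y=R(z,y)$ may be undefined, the assertion $p^\ast(w_y)=0$ is unjustified, and, most importantly, the identities $R_z(w_y)=z$ and $R_x(w_y)=y$ --- which you describe as automatic --- are precisely the nontrivial facts to be proved; deriving them from $\sigma$ is circular, since $\sigma$ is only known to fix $z$ after one knows $z=R_z(w_y)$. The paper closes exactly this hole by multiplying its auxiliary polynomial $\qtilde$ by all of the relevant denominators ($g_1$, $h_1$, $p_1$) together with $h_p$, so that satisfaction of $(P,\qtilde)$ forces every needed non-vanishing, after which the ideal-membership argument transfers the corresponding numerator identities from $x$ to $y$; you would need to augment $\widetilde Q$ in the same way. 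A smaller point: Proposition \ref{prop:theq} is not applicable as stated, since $(P,\widetilde Q)$ need not be a constrained pair (its second component can have order $\geq\ord(P)$); the equivalence of the zero-set equality with $(P,Q)\in\overline{T_{K\la z\ra}}$ must be argued directly, as the paper does.
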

\begin{proof}
First, using the oracle $T_K$, search for a pair $(p_z,q_z)\in\TKbar$
satisfied by $z$.  Since $z\in\Khat$, we eventually find such a pair,
in which $p_z$ is the minimal differential polynomial of $z$ over $K$.
Set $r_z=\ord(p_z)$.  Then $K\la z\ra$ has transcendence basis
$$B_0={\left\{ z,\del z,\ldots,\del^{r_z-1}z\right\}}$$ over $K$, and is generated by this basis
along with $\del^{r_z}z$, whose minimal polynomial over the transcendence
basis is $p_z{\left(z,\ldots,\del^{r_z-1}z,Y\right)}$.  With this information, we can compute
the splitting set $S_{K\la z\ra}$ from $S_K$, which in turn is computable
from our $T_K$-oracle, by Theorem \ref{theorem:constraints}.

Moreover, $T_K$ can also compute $D_K$, the set of finite subsets
of $\Khat$ which are algebraically dependent over $K$.  But a finite
subset $S$ of $\widehat{K\la z\ra}=\Khat$ is algebraically independent over $K\la z\ra$
iff $S\cap B_0 =\emptyset$ and $S\cup B_0\notin D_K$.
Thus $$D_{K\la z\ra}\leq_T D_K\leq_T T_K,$$ again uniformly in $z$.
(``Computing $D_{K\la z\ra}$'' normally means deciding algebraic
independence over $K\la z\ra$ for all finite subsets from
some particular differential closure of $K\la z\ra$.
In this case, by Lemma \ref{lemma:Kzhat},
the identity map from $K\la z\ra$ into $\Khat$
is a differential Rabin embedding, and we have
given a process for deciding algebraic independence over $K\la z\ra$
which works for all finite tuples from $\Khat$.)

Since $K\la z\ra$ is a computable differential field, $T_{K\la z\ra}$
is $\Sigma^0_1$, uniformly in $z$ (and even in $K$).  So we need
only show that its complement is $\Sigma^0_1$ relative to $T_K$.
Briefly summarizing, the following argument says that there are two ways
for $(p,q)$ to lie in $T_{K\la z\ra}$: either $p$ is nonconstrainable there,
or else $q$ is not the correct constraint for $p$.  Constrainability is $\Sigma_1$
in an oracle for $D_K$, which we have.  Assuming $p$ is constrainable,
we use the differential closure to find some $x$ satisfying $(p,q)$,
and then use Lemma \ref{lemma:Dima} to compare $q$ over $[p]:h_p^\infty$
with a constrained pair $(p,\qtilde)$ over $K\la z\ra$, which $x$ is known to satisfy.
(Possibly $(p,\qtilde)$ is not actually a constrained pair over $K\la z\ra$,
but it is known to generate a principal type there, which is sufficient
for our purpose.)

We claim that, for every $(p,q)\in (K\la z\ra\{Y\})^2$, the following process
will halt iff $(p,q)\notin T_{K\la z\ra}$.
Set $r= \ord(p)$, and check first that $r>\ord(q)$, and that
$p$ is monic and irreducible as an algebraic polynomial
over $K\la z\ra$.  (This uses $S_{K\la z\ra}$.)
If so, then we search for an $x\in\Khat$ with the properties that
$$p(x)=0\neq q(x)\quad\text{and}\quad{\left\{ x,\del x,\ldots,\del^{r-1}x\right\}}\notin D_{K\la z\ra}.$$
Assuming $(p,q)\notin T_{K\la z\ra}$, such an $x$ exists, so eventually
we find one, and Fact \ref{fact:order} shows that $p$ is the minimal
differential polynomial of this $x$ over $K\la z\ra$, hence is constrainable,
by Fact \ref{fact:mindiffpoly}.
(If $p$ were not constrainable, then no such $x$ could exist, since we
have already confirmed that $p$ is irreducible.)  
Moreover, the differential ideal $[p]:h_p^\infty$ in
$K\la z\ra\{ X\}$ is precisely the ideal $$\set{j\in K\la z\ra\{ X\}}{j(x)=0}.$$
Having found this $x$, we search for some $u$ in the c.e.\ subfield $K\la x,z\ra$
of $\Khat$ such that $x$ and $z$ both lie in $K\la u\ra$.

Theorem \ref{theorem:Kolchin} shows that we do eventually find such a $u$ in $\Khat$,
since we assumed $\del$ to be nontrivial, and indeed we find
differential rational functions $f,g,h$ with coefficients in $K$
such that $$f(x,z)=u,\quad g(u)=x,\quad \text{and}\quad h(u)=z.$$
We also use the oracle to find a pair $(p_u,q_u)\in\TKbar$ satisfied by $u$,
and another pair $(p_x,q_x)\in\TKbar$ satisfied by $x$.
Now $$h(f(x,z))-z=0,$$
and if we choose $h_0,h_1\in K\la z\ra\{ X\}$ so that
$$\frac{h_0(X)}{h_1(X)}=h(f(X,z))-z,$$
then $h_0(x)=0\neq h_1(x)$.
Similarly, setting $$\frac{g_0(X)}{g_1(X)}=g(f(X,z))-X\quad \text{and}\quad \frac{p_0(X)}{p_1(X)}=p_u(f(X,z))$$ gives
$g_0(x)=0\neq g_1(x)$ and $p_0(x)=0\neq p_1(x)$,
with all these polynomials in $K\la z\ra\{ X\}$.

Now we define $\qtilde\in K\la z\ra\{ X\}$ by
$$\qtilde(X)=q_u(f(X,z))\cdot 
g_1(X)\cdot h_1(X)\cdot
p_1(X)\cdot h_p(X),$$
with $h_p$ as on page~\pageref{definition:hp} for our $p$.
Suppose $\xtilde\in\Khat$ satisfies $(p,\qtilde)$.
Now every $j\in K\la z\ra\{ X\}$ with $j(x)=0$ lies in $[p]:h_p^\infty$,
hence must have $j(\xtilde)=0$ (since $h_p(\xtilde)\neq 0$).  
We also know that $x$ satisfies $(p,\qtilde)$, since $h_p$
has strictly lesser rank than the minimal differential polynomial
$p$ of $x$, and thus must have $h_p(x)\neq 0$.
We set $$\utilde=f(\xtilde,z),$$ so by our assumption about
$\xtilde$, we know $q_u(\utilde)\neq 0$.
Also, $$0=p_u(u)=p_u(f(x,z)),$$ so $p_0$
lies in $[p]:h_p^\infty$, while $p_1(\xtilde)\neq 0$, and
therefore $$0= p_u(f(\xtilde,z))=p_u(\utilde)$$ as well.
Therefore this $\utilde$ satisfies the constrained pair
$(p_u,q_u)\in\TKbar$, and so the map $\sigma$
with $\sigma(u)=\utilde$ which restricts to the identity on $K$
is an isomorphism from $K\la u\ra$ onto $K\la\utilde\ra$.

Now $0=h_0(\xtilde)=g_0(\xtilde)$, since these differential polynomials
both have $x$ as a zero, while we know $$g_1(\xtilde)\cdot h_1(\xtilde)\neq 0.$$
It follows that $$h(f(\xtilde,z))-z=0=g(f(\xtilde,z))-\xtilde,$$
and so $h(\utilde)=z$ and $g(\utilde)=\xtilde$.
But $g$ and $h$ have coefficients in $K$, so
$$\sigma(z)=\sigma(h(u))=h(\sigma(u))=h(\utilde) = z~~\text{and}~~
\sigma(x)=\sigma(g(u))=g(\sigma(u))=g(\utilde) = \xtilde.$$
Thus this $\sigma$ maps $x$ to $\xtilde$, fixes $z$, and fixes $K$ pointwise,
so $\sigma$ witnesses that $x\cong_{K\la z\ra}\xtilde$.
We have thus shown that $(p(X),\qtilde(X))$
generates a principal type over $K\la z\ra$ and is satisfied by $x$.

Notice that $(p,\qtilde)$ could fail to be a constrained pair, since $\qtilde$
could have order $\geq\ord(p)$, but it has the basic property
of constrained pairs, which is to generate a principal type.
(Recall that this means that every two elements of $\widehat{K\la z\ra}$
which both satisfy $(p,q)$ are isomorphic over $K\la z\ra$.)

Now suppose $t\in V(p,q)$.  Then $q(t)=0\neq q(x)$,
so $$x\not\cong_{K\la z\ra} t.$$  Since $x$ satisfies $(p,\qtilde)$,
$t$ cannot satisfy it, and with $p(t)=0$, this forces $\qtilde(t)=0$.
Thus $$V(p,q)\subseteq V(p,\qtilde).$$
We now claim that $$V(p,\qtilde) = V(p,q)\quad \iff\quad (p,q)\in \overline{T_{K\la z\ra}}.$$
For the forwards direction, suppose $t$ and $v$ both satisfy $(p,q)$.
Then $$t\notin V(p,q)=V(p,\qtilde),$$ but $p(t)=0$, so $\qtilde(t)\neq 0$.
Thus $t$ satisfies $(p,\qtilde)$, and so does $v$, by the same argument.
Since $(p,\qtilde)$ generates a principal type, we have $t\cong_{K\la z\ra} v$,
which proves $$(p,q)\in\overline{T_{K\la z\ra}}.$$  For the converse,
suppose $$V(p,\qtilde)\neq V(p,q).$$  Then there must exist some
$w\in V(p,\qtilde)-V(p,q)$, since we saw above that $V(p,q)\subseteq V(p,\qtilde)$.
Thus $p(w)=0$ and $q(w)\neq 0$, so $w$ satisfies $(p,q)$, as does $x$.
Yet $$\qtilde(x)\neq 0 =\qtilde(w)$$ (since $w\in V(p,\qtilde)$), and so
$x\not\cong_{K\la z\ra} w$, which proves $$(p,q)\notin\overline{T_{K\la z\ra}}.$$

So we use Lemma \ref{lemma:Delta1} to check whether $V(p,\qtilde)=V(p,q)$,
and if so, we conclude that $(p,q)$ is a constrained pair over $K\la z\ra$.
Thus, being such a constrained pair is $\Sigma^0_1$ relative to $T_K$,
and not being such a constrained pair is $\Sigma^0_1$ without any oracle.
Finally, since a $T_{K\la z\ra}$-oracle can enumerate $\overline{U_{K\la z\ra}}$,
so can a $T_K$-oracle, now that we know $T_{K\la z\ra}\leq_T T_K$.
This completes the proof of Theorem \ref{theorem:fgalg}.
\end{proof}

\section{Extension by Differential Transcendentals}
\label{sec:trans}

\begin{lemma}
\label{lemma:irred}
Suppose that $p\in\KY$ is algebraically irreducible and has order $r$.
If an element $y\in\Khat$ has $p(y)=0$, but $p$ is not the minimal
differential polynomial of $y$, then there exists
some $h\in\KY$ of order $<r$ such that $h(y)=0$ as well.
\end{lemma}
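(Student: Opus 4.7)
The plan is to let $\ptilde$ denote the unique monic irreducible differential polynomial for which $y$ satisfies a constrained pair $(\ptilde,\qtilde)\in\TKbar$ (which exists and is unique by Fact~\ref{fact:order}), set $r_0=\ord(\ptilde)$, and split on whether $r_0<r$ or $r_0=r$. Fact~\ref{fact:order} guarantees $r_0\leq r$, since $p(y)=0$ and $r_0$ is the least order of a nonzero element of $I_K(y)$. In the first case, I would simply take $h=\ptilde$ and be done.

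In the remaining case $r_0=r$, the hypothesis that $p$ is not the minimal differential polynomial of $y$ amounts to saying that $p$ is not an associate of $\ptilde$ in the algebraic polynomial ring $K[Y,\del Y,\ldots,\del^r Y]$. I would then pseudo-divide $p$ by $\ptilde$, viewing both as polynomials in the single variable $\del^r Y$ with coefficients in $K[Y,\del Y,\ldots,\del^{r-1}Y]$, to get a relation
\begin{equation*}
I_{\ptilde}^k\cdot p \;=\; q\cdot\ptilde \;+\; s
\end{equation*}
with $q,s\in K[Y,\del Y,\ldots,\del^r Y]$ and $\deg_{\del^r Y}(s)<\deg_{\del^r Y}(\ptilde)$. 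Evaluating at $(y,\del y,\ldots,\del^r y)$ forces $s(y)=0$, since $p(y)=0=\ptilde(y)$.

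The finish then analyzes the three possibilities for $s$. If $s=0$, then $\ptilde$ divides $I_{\ptilde}^k\cdot p$ in $K[Y,\del Y,\ldots,\del^r Y]$; since $I_{\ptilde}$ has order strictly less than $r$, a degree comparison in $\del^r Y$ rules out $\ptilde\mid I_{\ptilde}^k$, so irreducibility of $\ptilde$ forces $\ptilde\mid p$, and irreducibility of $p$ then makes $p$ and $\ptilde$ associates, contradicting the hypothesis. If instead $\ord(s)=r$, so that $s$ has positive degree in $\del^r Y$ strictly below $\deg_{\del^r Y}(\ptilde)$, then $s(y)=0$ contradicts the minimality of $\deg_{\del^r Y}(\ptilde)$ asserted in Fact~\ref{fact:order}. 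Thus $s$ is nonzero of order below $r$, and one takes $h=s$.

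The main subtlety I anticipate is in interpreting the hypothesis ``$p$ is not the minimal differential polynomial'' correctly: it must exclude $p$ being a $K^{\times}$-multiple of $\ptilde$, since otherwise no $h$ of lower order could vanish at $y$ by the minimality in Fact~\ref{fact:order}. Once one reads the hypothesis as saying $p$ and $\ptilde$ are not associates, the pseudo-division argument runs smoothly, with everything else amounting to routine manipulation in $K[Y,\del Y,\ldots,\del^r Y]$.
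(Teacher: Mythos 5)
Your proof is correct and follows essentially the same route as the paper's: both pass to the minimal differential polynomial of $y$ (your $\ptilde$), take it as the desired $h$ when its order is below $r$, and in the equal-order case derive a contradiction with the irreducibility of $p$ --- the paper by noting that the substituted minimal polynomial of $\del^r y$ must divide $p$, you by pseudo-dividing $p$ by $\ptilde$ and invoking the order- and degree-minimality clauses of Fact \ref{fact:order}. Your remark that ``$p$ is not the minimal differential polynomial'' should be read as ``$p$ is not an associate of $\ptilde$'' is a fair reading, and the paper's argument implicitly relies on the same convention.
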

\begin{proof}
This $y$ must have a minimal differential polynomial $h$ over $K$.
If $h$ has order $\geq r$, then by minimality it must have order $r$
and degree $< d$, the degree of $\del^rY$ in $p(Y)$.
But $h(y)=0$ gives the minimal (algebraic) polynomial in
$$K\big(y,\del y,\ldots,\del^{r-1}y\big)[\del^rY]$$
of $\del^ry$ over $K\big(y,\del y,\ldots,\del^{r-1}y\big)$.
Thus $h$ must divide $p$, contradicting the irreducibility of $p$.
\end{proof}

\begin{lemma}
\label{lemma:counterEX}
Suppose that $p\in K\la z\ra\{ Y\}$, where $z$ is differentially transcendental over $K$.
Suppose further that all coefficients of $p$ lie
within the field $K(z,\del z,\ldots,\del^r z)$.
If there are elements $x,y$ in the differential closure of $K\la z\ra$
for which $p(x)=p(y)=0$ but $x\not\cong_{K\la z\ra} y$, then there
exists $g\in K(z,\ldots,\del^r z)\{ Y\}$ of strictly lesser rank
(in $z$) than $p$, such that
$$
\text{either}\ \
g(x)=0\neq g(y)\ \  \text{or}\ \  g(y)=0\neq g(x).$$
If $p$ is constrainable over $K\la z\ra$,
and $g(x)=0\neq g(y)$, then there is also some $\ytilde$ and some
$\gtilde\in K\big(z,\ldots,\del^{r-1} z\big)\{ Y\}$ such that
$\gtilde(x)=p(\ytilde)=0\neq \gtilde(\ytilde)$.
\end{lemma}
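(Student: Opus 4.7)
The plan is to prove the two statements successively by performing appropriate pseudo-divisions, first in the variable $z$ for the main claim and then in $\del^r z$ for the refinement.

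For the first statement, since $x\not\cong_{K\la z\ra} y$, Proposition \ref{prop:noniso} supplies some $h\in K\la z\ra\{Y\}$ with (WLOG) $h(x)=0\neq h(y)$. After clearing denominators, we may take $h\in K\{z,Y\}$, viewed as a differential polynomial in the two differential indeterminates $z,Y$ over $K$. Using a ranking that prioritizes $z$-derivatives, $p$ has $z$-rank $(r,d)$ where $d$ is the degree of $\del^r z$ in $p$. Apply differential pseudo-reduction of $h$ by $p$ in this ranking (analogous to the algorithm in the proof of Lemma \ref{lemma:Dima}, but with respect to $z$ rather than $Y$): this produces $g\in K(z,\ldots,\del^r z)\{Y\}$ of $z$-rank strictly less than $(r,d)$, along with an identity $h_p^k h=\sum_i A_i(\del^i p)+g$ in $K\{z,Y\}$, where $h_p=I_p\cdot s_p$ is the product of the initial and separant of $p$ as defined on page~\pageref{definition:hp}, but now with respect to $\del^r z$ in place of $\del^{\ord p}Y$.

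Evaluating this identity at $Y=x$ (a differential ring homomorphism $K\{z,Y\}\to K\la z,x\ra$), the hypothesis $p(x)=0$ gives $(\del^i p)(x)=\del^i(p(x))=0$ for every $i\ge 0$, so $h_p(x)^k h(x)=g(x)$ and hence $g(x)=0$. Similarly $g(y)=h_p(y)^k h(y)$. If $h_p(y)\neq 0$, then $g(y)\neq 0$ and the first claim is established. Otherwise $h_p(y)=0$, so at least one of $I_p, s_p$ vanishes at $y$; both have $z$-rank strictly below that of $p$. If the same factor is nonzero at $x$, it serves directly as the distinguishing polynomial (with the roles of $x$ and $y$ swapped); otherwise, we recurse using it as the new pivot. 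The strict decrease of $z$-rank at each step forces termination.

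For the second claim, suppose $g(x)=0\neq g(y)$ and $(p,q)$ is a constrained pair in $\overline{T_{K\la z\ra}}$. If $g$ already has $z$-order $<r$, then $g\in K(z,\ldots,\del^{r-1}z)\{Y\}$ and we may take $\gtilde=g$ and $\ytilde=y$ (noting $p(y)=0$). Otherwise $g$ has $z$-order exactly $r$ and degree $e<d$ in $\del^r z$. Viewing $p$ and $g$ as elements of $R[T]$ where $R=K[z,\ldots,\del^{r-1}z]\{Y\}$ and $T=\del^r z$, run the Euclidean pseudo-division algorithm in $T$ (using the initials $I_p,I_g$ in $T$ to avoid introducing denominators). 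This terminates in a polynomial $\gtilde\in K(z,\ldots,\del^{r-1}z)\{Y\}$ of $T$-degree $0$; the pseudo-division identities combined with $p(x)=g(x)=0$ force $\gtilde(x)=0$. For $\gtilde(y)\neq 0$, use the algebraic irreducibility of $p$ over $K\la z\ra$ (guaranteed by constrainability): the polynomials $p(y,T)$ and $g(y,T)$ in $T$ over $K\la z,y\ra$ share only the factor corresponding to the minimal polynomial of $\del^r z$ over $K\la y\ra(z,\ldots,\del^{r-1}z)$, and tracking the initials through the Euclidean algorithm yields a remainder that is nonzero at $y$ (with any vanishing of intermediate initials at $y$ handled by recursion on strictly lower $z$-rank polynomials).

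The main obstacle will be the careful bookkeeping of vanishings at $y$ of the various initials and separants that arise during the pseudo-reductions. Each such vanishing supplies a strictly lower $z$-rank polynomial that can take over the role of the original, so the process terminates, but the case analysis is delicate. The critical role of constrainability in the second claim is to guarantee the algebraic irreducibility of $p$, without which the Euclidean reduction in $\del^r z$ could degenerate (for instance, if $g$ were a nontrivial factor of $p$ in $\del^r z$, causing the final remainder to vanish identically).
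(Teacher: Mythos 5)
Your proof of the main statement is essentially the paper's: obtain $h$ from Proposition \ref{prop:noniso}, pseudo-reduce it by $p$ with respect to the $z$-variables to get $g$ of lower $z$-rank with $g(x)=0$, and dispose of the degenerate case by (transfinite) induction on $z$-rank. Your case split on $h_p(y)$ is equivalent to the paper's split on whether $g(y)=0$, since your identity gives $g(y)=h_p(y)^k h(y)$; that half is correct.

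The gap is in the second claim, where you fix $\ytilde=y$ and try to prove $\gtilde(y)\neq 0$. Unrolling your pseudo-division chain in $T=\del^r z$ expresses the final remainder as $\gtilde=A\cdot p+B\cdot g$ with $A,B$ polynomials in $T$ over $K(z,\ldots,\del^{r-1}z)\{Y\}$, so evaluating at $Y=y$ gives $\gtilde(y)=B(y)\,g(y)$, and nothing in your argument rules out $B(y)=0$. The supporting claim that "$p(y,T)$ and $g(y,T)$ share only the factor corresponding to the minimal polynomial of $\del^r z$" is also off: since $g(y)\neq 0$, that minimal polynomial does not divide $g(y,T)$ at all, and in any case the remainder sequence computed formally in $Y$ does not specialize to the remainder sequence of the specialized polynomials once intermediate initials vanish at $y$ --- exactly the situation your parenthetical "recursion on lower $z$-rank polynomials" would have to handle, and it is not stated what that recursion proves or why it would yield nonvanishing at the particular element $y$. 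The paper avoids this difficulty entirely: the lemma only requires \emph{some} $\ytilde$, and constrainability of $p$ over $K\la z\ra$ is used not merely for irreducibility (which, as you correctly note, rules out a vanishing final remainder) but, via Fact \ref{fact:mindiffpoly}, to supply a zero $\ytilde\in\widehat{K\la z\ra}$ of $p$ independent of $y$ for which $p(\ytilde)=0\neq\gtilde(\ytilde)$; your degenerate case ($g$ free of $\del^r z$, $\gtilde=g$, $\ytilde=y$) is fine, but in the main case the stronger assertion about $y$ itself is unproven, so your second half does not go through as written.
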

\begin{proof}
The proof of the main statement is by induction on the rank of $p$ with respect to
the variable $z$, where we view $p$ as a polynomial in both
$Y$ and $z$, clearing denominators if needed.
(Technically, this is a transfinite induction, since ranks
of differential polynomials, even in our simple definition,
form an order of type $\omega^2$.)
The base case is trivial, since when $r=-1$,
then the two elements $x$ and $y$ must satisfy $x\not\cong_K y$,
and Proposition \ref{prop:noniso} then provides a polynomial
$g\in\KY$ with $g(x)=0\neq g(y)$ (or vice versa).

For the inductive step, let $p$, $x$, and $y$ be as described.  Since
$x\not\cong_{K\la z\ra} y$, Proposition \ref{prop:noniso}
yields some $h\in K\la z\ra\{ Y\}$ with $$h(x)=0\neq h(y).$$
That proposition does not provide
any \textit{a priori} bound on the order of $z$
in the coefficients of this $h$, but we may view
$h(Y)$ as a differential polynomial in the two variables $Y$ and $z$
(clearing denominators in $h(Y)$ if needed) and reduce $h$
modulo $[p]:h_p^\infty$ with respect to $z$, finding some $g$,
whose rank with respect to $z$ is less than the rank of $p(Y)$,
and some exponent $k$ such that $h_p^k\cdot h-g\in [p]$ (see \cite[Section~I.9]{KolchinBook1973}).
Therefore,
$$h_p(x)^k\cdot h(x)-g(x)=0,$$ forcing
$g(x)=0$. Now  either $g$ satisfies the lemma
(if $g(y)\neq 0$), or we apply induction to $g$ (if $g(y)=0$),
since $g$ also has strictly lesser rank than $p$.

Now suppose that $p$ is also constrainable.
(Notice that, if $(p,q)\in \overline{T_{K\la z\ra}}$,
then $q$ has lower order than $p$ in the variable $Y$,
but not necessarily in $z$.  So we cannot simply take $\gtilde$ to equal $q$.)
Assume that $g$ has positive degree in $\del^rz$,
since otherwise we can set $\gtilde=g$.
This degree must be less than that of $p$, since $g$ has lower
rank than $p$, and so we may apply the pseudo-division
algorithm from Lemma \ref{lemma:Dima} repeatedly,
with respect to the degree of the variable $\del^rz$ over
coefficients from $K(z,\ldots,\del^{r-1} z)\{ Y\}$.  This yields
\begin{align*}
i_g(Y)^{m_g}\cdot p(Y) &= g(Y)\cdot d_1(Y) + q_1(Y)\\
i_{q_1}(Y)^{m_1}\cdot g(Y) &= q_1(Y)\cdot d_2(Y) + q_2(Y)\\
&~~~\vdots\\
i_{q_n}(Y)^{m_n}\cdot q_{n-1}(Y) &= q_n(Y)\cdot d_{n+1}(Y)+q_{n+1}(Y),
\end{align*}
where, for all $i\leq n$, $q_{i+1}(Y)$ has strictly
lower degree in $\del^rz$ than $q_i(Y)$ has (including $i=0$, with $q_0=g$).
The process ends when we reach an $n$ for which
$q_{n+1}(Y)$ has degree $0$ in $\del^rz$.
Notice that if $q_{n+1}(Y)$ were the zero polynomial,
then $q_n(Y)$ would have positive degree in $\del^rz$ and would
divide $$i_g(Y)^{m_g}\cdot i_{q_0}(Y)^{m_0}\cdot\ldots\cdot i_{q_n}(Y)^{m_n}\cdot p(Y),$$
which is impossible,
since $$i_g(Y)^{m_g}\cdot i_{q_0}(Y)^{m_0}\cdot\ldots\cdot i_{q_n}(Y)^{m_n}$$
is free of $\delta^r z$ and
the constrainable polynomial $p$, even after its
denominators were cleared, must be irreducible
(as a polynomial in $Y,\del Y,\del^2 Y\ldots$)
and can have no nontrivial factor from $K\la z\ra$.  Therefore,
$$q_{n+1}(Y)\in K\big(z,\ldots,\del^{r-1} z\big)\{ Y\}$$ is nonzero.
But by induction on $i$, we see that
either $i_{q_i}(x) = 0$, or else $q_i(x)=0$ for every $x$,
since $p(x)=g(x)=0$.  In the first case, we take $\tilde g := i_g$,
while in the second case, $\gtilde=q_{n+1}$ is the desired
differential polynomial.  In both cases, we get  $$\gtilde(x)=0=p(x),$$ while
Fact \ref{fact:mindiffpoly} yields some $\ytilde$ in the differential closure of
$K\la z\ra$ with $p(\ytilde)=0\neq\gtilde(\ytilde)$.
\end{proof}

We state the next lemma in a specific form which will be useful
for the results in this section.
\begin{lemma}
\label{lemma:constraintvariety}
For any differential polynomials $p,q,h\in\KY$,
the set $$\set{x\in\Khat}{p(x)=h(x)=0\neq q(x)}$$ is empty
iff $1\in \sqrt{[p,h]}:q$.
\end{lemma}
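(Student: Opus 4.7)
The plan is to recognize this as a direct instance of the Differential Nullstellensatz, which was already invoked in Lemma \ref{lemma:Delta1}. First I would translate the set-theoretic condition into a containment of differential varieties: the set $\set{x\in\Khat}{p(x)=h(x)=0\neq q(x)}$ is empty precisely when every common zero of $p$ and $h$ in $\Khat$ is also a zero of $q$, i.e.\ when $V(p,h)\subseteq V(q)$.

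Next I would apply the Differential Nullstellensatz to convert this containment into an algebraic statement about radical differential ideals. Since $\Khat$ is differentially closed, $V(p,h)\subseteq V(q)$ is equivalent to $q\in\sqrt{[p,h]}$. Finally, by the definition of the colon ideal given on page \pageref{colonideal}, $q\cdot 1\in\sqrt{[p,h]}$ is literally the condition $1\in\sqrt{[p,h]}:q$ (noting that, because $\sqrt{[p,h]}$ is already radical, there is no difference between $\sqrt{[p,h]}:q$ and $\sqrt{[p,h]}:q^\infty$, so no ambiguity in interpretation of the colon ideal arises here).

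If one prefers not to appeal to Lemma \ref{lemma:Delta1} directly, the same conclusion can be obtained by the Rabinowitsch trick: introduce a new differential indeterminate $Z$ and consider the ideal $[p,h,qZ-1]$ in $K\{Y,Z\}$. The set in question is empty iff $V(p,h,qZ-1)=\emptyset$ in $\Khat$, which by the Nullstellensatz is equivalent to $1\in\sqrt{[p,h,qZ-1]}$, and the usual elimination of the auxiliary variable yields $q^n\in\sqrt{[p,h]}$ for some $n$, hence $q\in\sqrt{[p,h]}$ by radicality.

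The only potential obstacle is simply a matter of confirming the precise form of the Differential Nullstellensatz being used, but this is already discussed (and cited) in the proof of Lemma \ref{lemma:Delta1}, so nothing further is needed here; the proof is essentially a one-line unwinding of definitions once the Nullstellensatz is in hand.
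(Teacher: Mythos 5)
Your proposal is correct and follows essentially the same route as the paper: both arguments hinge on the Differential Nullstellensatz (the easy direction of which is the evaluation argument the paper writes out explicitly) together with unwinding the colon-ideal definition, and your observation that radicality makes $\sqrt{[p,h]}:q$ unambiguous is accurate. The Rabinowitsch-trick aside is unnecessary but harmless.
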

\begin{proof}
If $p(x)=h(x)=0\neq q(x)$, then every $f\in\sqrt{[p,h]}:q$ has
$f^n\cdot q^n\in [p,h]$ for some $n$, hence has $f(x)=0$,
precluding the constant function $1$ from appearing in $\sqrt{[p,h]}:q$.

Conversely, if $1\notin \sqrt{[p,h]}:q$, then, for every $n$, we have $q^n\notin [p,h]$,
and the Differential Nullstellensatz yields an $x\in\Khat$
with $p(x)=h(x)=0$ but $q(x)\neq 0$.
\end{proof}
\begin{corollary}
\label{cor:constraintvariety}
Suppose the pair $(p,q)$ from $K\{ Y\}$
has $p$ monic and irreducible of order $>\ord(q)$.
Then the following are equivalent.
\begin{enumerate}
\item\label{it:1041}
$(p,q)\in\TKbar$.
\item\label{it:1042}
For all $h\in K\{ Y\}$ of lesser rank than $p$, $1\in\sqrt{[p,h]}:q$.
\item\label{it:1043}
For all $g\in K\{ Y\}$, we have either $1\in\sqrt{[p,g]}:q$
or $g\in \sqrt{[p]}:q$.
\end{enumerate}
\end{corollary}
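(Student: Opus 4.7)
The plan is to prove $(1)\Leftrightarrow(3)$ directly, and then close a loop by showing $(1)\Rightarrow(2)$ and $(2)\Rightarrow(1)$. The three workhorses are Lemma \ref{lemma:constraintvariety} (which translates colon-ideal membership into the nonexistence of zeros in $\Khat$), Fact \ref{fact:order} (any $x$ satisfying $(p,q)$ has $p$ as its minimal differential polynomial), and the differential pseudo-division algorithm from the proof of Lemma \ref{lemma:Dima} (which reduces any $f\in\KY$ to some $f'$ of strictly lesser rank than $p$ with $f-f'\in [p]:h_p^\infty$).

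For $(1)\Leftrightarrow(3)$, I would first restate constrainability using Proposition \ref{prop:noniso}: $(p,q)\in\TKbar$ iff for every $g\in\KY$ and every pair $x,y\in\Khat$ satisfying $(p,q)$, one has $g(x)=0\iff g(y)=0$. Next, I would unpack the two disjuncts of (3). By Lemma \ref{lemma:constraintvariety}, $1\in\sqrt{[p,g]}:q$ says exactly that no $x\in\Khat$ satisfies $p(x)=g(x)=0\neq q(x)$, i.e.\ no realization of $(p,q)$ kills $g$. On the other hand, by the Differential Nullstellensatz, $g\in\sqrt{[p]}:q$ says $gq\in\sqrt{[p]}$, i.e.\ $g$ vanishes at every $x\in\Khat$ satisfying $(p,q)$. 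Thus condition (3) says precisely that $g$'s vanishing behavior is constant across realizations of $(p,q)$, which matches the restated form of (1).

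For $(1)\Rightarrow(2)$, I would fix a nonzero $h\in\KY$ of lesser rank than $p$ and argue by contradiction: if some $x\in\Khat$ had $p(x)=h(x)=0\neq q(x)$, then $x$ would satisfy the constrained pair $(p,q)$, so Fact \ref{fact:order} forces $p$ to be the minimal differential polynomial of $x$ and hence no nonzero polynomial of lesser rank can vanish at $x$ -- contradicting $h(x)=0$. So no such $x$ exists, and Lemma \ref{lemma:constraintvariety} yields $1\in\sqrt{[p,h]}:q$. For $(2)\Rightarrow(1)$, suppose $x,y\in\Khat$ both satisfy $(p,q)$ and, for contradiction, that some $h\in\KY$ has $h(x)=0\neq h(y)$. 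Pseudo-divide $h$ by $p$ to obtain $h'\in\KY$ of strictly lesser rank than $p$ with $h-h'\in [p]:h_p^\infty$. Because $p$ is the minimal differential polynomial of both $x$ and $y$ (Fact \ref{fact:order}), and $h_p$ has strictly lower rank than $p$, we have $h_p(x)\neq 0\neq h_p(y)$; hence $h(x)=h'(x)$ and $h(y)=h'(y)$. In particular $h'$ is nonzero and $p(x)=h'(x)=0\neq q(x)$, so by Lemma \ref{lemma:constraintvariety}, $1\notin\sqrt{[p,h']}:q$, contradicting (2). Proposition \ref{prop:noniso} then gives $x\cong_K y$, establishing (1).

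The main point requiring care is the pseudo-division step in $(2)\Rightarrow(1)$: one must verify that $h_p=I_p\cdot s_p$ genuinely has strictly lower rank than $p$ (so it is nonzero at any element whose minimal differential polynomial is $p$), and that the reduction gives $h-h'\in [p]:h_p^\infty$ rather than merely in some larger ideal. A minor bookkeeping issue is the interpretation of "lesser rank than $p$" in (2): it must be understood to range over nonzero $h$ (zero is trivially unconstrainable as a counterexample, since $\sqrt{[p,0]}:q=\sqrt{[p]}:q$ never contains $1$ once $\ord(p)>\ord(q)$ by Blum's axioms).
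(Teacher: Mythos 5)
Your argument is correct in substance, but it routes the equivalences differently from the paper and one citation needs repair. The paper proves (1)$\iff$(2) first, then deduces (3) from (2) via the remark that every $g$ is equivalent modulo $\sqrt{[p]}:q$ to some $h$ of lower rank than $p$; and in the direction (2)$\Rightarrow$(1) it appeals to Lemma \ref{lemma:counterEX}. You instead prove (1)$\iff$(3) directly, reading both sides (via Lemma \ref{lemma:constraintvariety} and the Differential Nullstellensatz) as the assertion that the vanishing of each $g$ is constant across the realizations of $(p,q)$ in $\Khat$, and for (2)$\Rightarrow$(1) you carry out the pseudo-division of Lemma \ref{lemma:Dima} on the witness $h$ yourself, producing $h'$ of lower rank with $h-h'\in[p]:h_p^\infty$, rather than quoting Lemma \ref{lemma:counterEX}. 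Both choices are legitimate and arguably better matched to the corollary's setting: Lemma \ref{lemma:counterEX} is literally stated for $K\la z\ra$ with $z$ differentially transcendental, so your direct reduction is exactly the one-variable instance of the mechanism the paper implicitly invokes, and your (1)$\iff$(3) bypasses the paper's reduction-mod-$\sqrt{[p]}:q$ step entirely. Your remark that (2) must range over nonzero $h$ is also correct; the paper's proof tacitly assumes the same.

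The one flaw is in (2)$\Rightarrow$(1): you justify $h_p(x)\neq 0\neq h_p(y)$ by asserting that $p$ is the minimal differential polynomial of $x$ and $y$ ``by Fact \ref{fact:order}.'' That Fact applies only to elements satisfying a pair already known to lie in $\TKbar$, which is precisely the conclusion you are trying to establish, so as written the citation is circular. The repair is immediate and uses only what you already have on the table: $h_p=I_p\cdot s_p$ is nonzero and of strictly lower rank than $p$ (as you note, $I_p$ has order below $\ord(p)$, and $s_p$ has smaller degree in $\del^{\ord(p)}Y$), so hypothesis (2) applied to $h=h_p$, combined with Lemma \ref{lemma:constraintvariety}, says directly that no realization of $(p,q)$ annihilates $h_p$; in particular $h_p(x)\neq 0\neq h_p(y)$, and the rest of your contradiction goes through unchanged. (Your use of Fact \ref{fact:order} in the direction (1)$\Rightarrow$(2) is fine, since there $(p,q)\in\TKbar$ is the hypothesis.)
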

\begin{proof}
If $(p,q)$ is a constrained pair, then $p$ is the minimal differential
polynomial of some $y\in\Khat$, and so $q(y)\neq 0\neq h(y)$
for every $h$ of lesser rank than $p$.  But now every $x$
with $p(x)=0\neq q(x)$ has $x\cong_K y$, so $h(x)\neq 0$
for all such $h$, forcing $$\set{x\in\Khat}{p(x)=h(x)=0\neq q(x)}$$
to be empty.  By  Lemma \ref{lemma:constraintvariety},
it follows that $1\in \sqrt{[p,h]}:q$ for all such $h$.

Conversely, suppose $1\in\sqrt{[p,h]}:q$
for all $h\in K\{ Y\}$ of lesser rank than $p$.
Then, for every such $h$,
$$\set{x\in\Khat}{p(x)=h(x)=0\neq q(x)}$$ is empty, by Lemma \ref{lemma:constraintvariety}.
Therefore, if $x,y\in\Khat$ both satisfy $(p,q)$,
then $h(x)\neq 0\neq h(y)$ for every such $h$,
and Lemma \ref{lemma:counterEX} then shows
that $x\cong_K y$.  Thus, $(p,q)\notin T_K$, and so~\eqref{it:1041} $\iff$~\eqref{it:1042}.

The equivalence of~\eqref{it:1042} and~\eqref{it:1043} follows because every $g$
is equivalent modulo $\sqrt{[p]}:q$ to some $h$ of lesser rank than $p$.
If this $h$ is zero, then $g\in\sqrt{[p]}:q$, while if not, then
every $x$ with $p(x)=0\neq q(x)$ has $g(x)=h(x)\neq 0$,
and so $1\in \sqrt{[p,g]}:q$ by Lemma \ref{lemma:constraintvariety}.
\end{proof}

\begin{lemma}\label{lem:difoveralgclosed} If $\Khat$ is not algebraic over $K$, then this extension is of infinite transcendence degree, and indeed $\Khat$ contains elements
of arbitrarily large order over $K$.
\end{lemma}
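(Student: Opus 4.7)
The plan is to produce a transcendental $x_1\in\Khat$ with $\del x_1\neq 0$, then iteratively use Blum's axioms together with a Liouville-type argument to build algebraically independent $x_1,x_2,\ldots\in\Khat$, and finally apply Theorem~\ref{theorem:Kolchin} to collapse the resulting finitely differentially generated subfields to single elements of arbitrarily large order over $K$.

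Since $\Khat$ is not algebraic over $K$, pick $x_1\in\Khat$ transcendental over $K$; I first verify $\del x_1\neq 0$.  The constants of $\Khat$ all lie in $\overline{C_K}\subseteq\Khat$ and are therefore algebraic over $K$:  if $c\in\Khat$ were a transcendental constant satisfying a constrained pair $(p,q)$ with minimal differential polynomial $p$ (Fact~\ref{fact:order}), then substituting $\del^i c=0$ for $i\geq 1$ reduces $p(c)=0$ to the univariate $p(c,0,\ldots,0)=0$, which if nonzero forces $c$ algebraic and if identically zero forces every element of the infinite set $\overline{C_K}$ also to satisfy $p$.  Since $q(Y,0,\ldots,0)$ has only finitely many roots, some algebraic constant $c'\in\overline{C_K}$ also satisfies $(p,q)$; but then $K\la c\ra=K(c)$ is purely transcendental while $K\la c'\ra=K(c')$ is algebraic, contradicting the required isomorphism $c\cong_K c'$.

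Inductively suppose $x_1,\ldots,x_k\in\Khat$ are algebraically independent over $K$, and set $L_k=K\la x_1,\ldots,x_k\ra$, a finitely differentially generated subfield with nontrivial derivation whose differential closure is $\Khat$ by Lemma~\ref{lemma:Kzhat}.  By a standard Liouville-type fact, there exists $a\in L_k$ with no primitive in $L_k$; applying Blum's axioms to $(\del Y-a,1)$ produces $y\in\Khat$ with $\del y=a$.  If $y$ were algebraic over $L_k$ with monic minimal polynomial $Y^d+c_{d-1}Y^{d-1}+\cdots+c_0\in L_k[Y]$, then differentiating this relation and substituting $\del y=a$ yields a polynomial of degree $<d$ in $y$ which must vanish identically by minimality; the coefficient of $y^{d-1}$ gives $\del c_{d-1}+da=0$, making $-c_{d-1}/d\in L_k$ a primitive of $a$, a contradiction.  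Hence $y$ is transcendental over $L_k$, and setting $x_{k+1}:=y$ extends the sequence.  This already shows that $\Khat$ has infinite transcendence degree over $K$.

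For arbitrarily large order, given $n\geq 0$, let $m=n+\ord_K(x_1)$ and take algebraically independent $x_1,\ldots,x_m\in\Khat$ as above.  Since the derivation on $K\la x_1\ra$ is nontrivial, Theorem~\ref{theorem:Kolchin} applies over the base $K\la x_1\ra$ to the finitely differentially generated extension $L_m=K\la x_1,\ldots,x_m\ra$, yielding $u\in L_m$ with $L_m=K\la x_1,u\ra$.  The transcendence degree of $K\la x_1,u\ra$ over $K$ is at least $m$ by algebraic independence of the $x_i$, while that of $K\la x_1,u\ra$ over $K\la u\ra$ is at most $\ord_{K\la u\ra}(x_1)\leq\ord_K(x_1)$; subtracting, $\ord_K(u)\geq m-\ord_K(x_1)=n$.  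The main obstacle in this plan is justifying that every finitely differentially generated $L_k$ with nontrivial derivation satisfies $\del L_k\subsetneq L_k$; I would handle this by contradiction, showing that if $\del L_k=L_k$ then primitives of $1,\,1/b_0,\,1/b_1,\ldots$ (with $\del b_{i+1}=1/b_i$) form an algebraically independent sequence in $L_k$ via the same Liouville reduction used above, contradicting its finite transcendence degree over $K$.
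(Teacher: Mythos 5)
The crux of your argument is the claim, which you yourself flag as ``the main obstacle,'' that each finitely differentially generated field $L_k=K\la x_1,\ldots,x_k\ra$ with nontrivial derivation contains an element with no primitive in $L_k$. This is exactly where the proof breaks down, and your proposed repair does not work: if $\del L_k=L_k$, the iterated-primitive tower $b_0,b_1,b_2,\ldots$ (with $\del b_0=1$, $\del b_{i+1}=1/b_i$) is, by the classical Liouville/Ostrowski reduction, algebraically independent only over the constants (each $b_{i+1}$ is transcendental over the field generated by the constants and $b_0,\ldots,b_i$), not over $K$. Since $K$ is an arbitrary differential field, it may itself be closed under integration -- for instance the integration closure of $\Q(t)$ inside a model of $\DCF$, which is not differentially closed and whose differential closure is not algebraic over it -- and then every required primitive can already be chosen inside $K$, so no contradiction with the finiteness of the transcendence degree of $L_k$ over $K$ arises. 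In other words, the ``standard Liouville-type fact'' you invoke is standard only when transcendence is measured over the constants (or over a concrete base like $\Q(t)$); relative to an arbitrary base $K$ it is essentially as hard as the lemma itself. This is precisely why the paper's proof (due to Singer) does not argue with primitives alone: in the same-constants case it adjoins $n+1$ exponentials of integrals, i.e.\ solutions of $\del u_i/u_i=-\del v_i$ with $v_i=y^i$, and invokes Rosenlicht's corollary to force a nontrivial constant-linear combination $\sum_i c_iy^i$ algebraic over $K$; and in the new-constants case the statement you need -- that a specific element, there $1/(c+x)$, has no primitive in the relevant field -- is established only via a trace argument together with Risch's proposition and a partial-fraction computation. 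Your Ostrowski-style step (a primitive of an element with no primitive in $L_k$ is transcendental over $L_k$) is correct, but without a genuine proof of the no-primitive claim the induction producing $x_2,x_3,\ldots$ never gets off the ground, so the infinite transcendence degree is not established.

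Two further remarks. Your preliminary argument that $\Khat$ contains no constants transcendental over $K$ is fine (and is a known fact: $C_{\Khat}$ is the algebraic closure of $C_K$). And your derivation of ``arbitrarily large order'' from infinite transcendence degree -- applying Theorem \ref{theorem:Kolchin} over the base $K\la x_1\ra$, which always contains a nonconstant, and then comparing transcendence degrees to get $\ord_K(u)\geq n$ -- is correct and is in fact a pleasant uniformization of the paper's second half, which treats constant $K$ by a separate explicit construction with $x^n/n!+a_1x^{n-1}+\cdots+a_n$. But that part is contingent on the first, which remains unproved in your proposal.
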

\begin{proof}
The argument showing the first part of the statement is due to Michael Singer. For simplicity, assume that $K$ is algebraically closed. When $K$ and $\Khat$ have the same field of constants, one can use \cite[Corollary, p.~489]{Rosenlicht} to show that the transcendence degree of $\Khat$ over $K$ is infinite.
To see this, assume that this transcendence degree is finite, say $n$. Since $K$ is algebraically closed, $n \geqslant 1$. Let $y$ be in $\Khat$ and transcendental over $K$. Let
$$v_i = y^i,\quad i= 1,\ldots,n+1.$$ Since $\Khat$ is differentially closed, let $u_i \in \Khat$ satisfy $$\delta(u_i)/u_i = - \delta(v_i)\quad i = 1,\ldots, n+1.$$ The conclusion of the corollary implies that 
there exist constants $c_i$ such that $\sum_i c_iy^i$ is algebraic over $K$, which is a contradiction.

When $\Khat$ has new constants, argue as follows. Let $C$ be the constants of $\Khat$. If $\Khat$ is not algebraic over $KC$, then there is $y\in\Khat$ transcendental over $KC$, and we can argue as above to get a contradiction. �Therefore, we can assume $\Khat$ is algebraic over $KC$. We can assume that $K$ contains a non-constant $x$ (otherwise there are lots of elements in $\Khat$ that are algebraically independent over $K$, for example, non-zero solutions of $$\delta(Y) = z^n\cdot Y,\quad n \ge 0,$$ where $z \in \Khat$ with $\delta(z)=1$)  and that $\delta(x) = 1$ (replacing $\delta$ by $(1/\delta(x))\cdot\delta$ if needed). Since $K$ is algebraically closed, there is a constant $c \in \Khat$ not algebraic over $K$. Since $\Khat$ is differentially closed, there is $y\in \Khat$ such that $$\delta(y)=1/(c+x).$$ Since $y$ is algebraic over $KC$, taking traces, we see  \cite[Exercise~1.24]{Michael} that there is $z\in KC$ such that $$\delta(z)=1/(c+x).$$ Since $KC$ is an extension of $K(c)$ by constants, \cite[Proposition~1.2]{Risch} yields $w=p(c)/q(c) \in K(c)$ such that $$\delta(w) = 1/(c+x).$$ Expanding $w$ in partial fractions with respect to $c$, differentiating and comparing terms shows that this is impossible.

Now we consider the orders of elements of $\Khat$.
If $K$ is nonconstant, Theorem
\ref{theorem:Kolchin} then implies that $\Khat$ contains
individual elements of arbitrarily large order over $K$.
In the case where $K$ is a constant differential field,
let $x \in \Khat$ be such that $x'=1$; such an element
exists in $\Khat$ by Blum's axioms, and must be transcendental
over $K$ since an element algebraic over the constant field
$K$ would also be constant. Then, for all $a_1,...,a_n \in K$,
$$
f(a_1,...,a_n) := x^n/n! + a_1x^{n-1} + ... + a_n
$$
satisfies
$f^{(n)}=1$.
Now there exist $a_1,...,a_n \in K$ such that $f(a_1,...,a_n)$ is not a solution
of any polynomial differential equation $F$ of order $n-1$ or less. Indeed, for each $q < n$,
the coefficient of $x^0$ in $f^{(q)}$ is equal to $$q!\cdot a_{n-q}.$$
(This coefficient is often referred to as the ``constant term'' of $f^{(q)}$,
but it would be confusing to call it that in this context.)
Hence, for all $q_1,\ldots,q_r$ from $\{ 0,1,\ldots,n-1\}$
and all $n_1,\ldots,n_r$, the coefficient of $x^0$ of
$$
\prod\nolimits_i \big(f^{(q_i)}\big)^{n_i}$$
is equal to $$\prod\nolimits_i \big(q_i!\cdot a_{n-q_i}\big)^{n_i}
$$
Thus, the coefficient of $x^0$ in $F(f)$ is a polynomial in $a_1,...,a_n$, which
is not identically zero. This implies the result as $K$ is infinite.
\end{proof}

In the proof of Theorem~\ref{theorem:fgtrans}, we use the technique of characteristic sets from differential algebra. We will give a short introduction to it for the convenience of the reader unfamiliar with characteristic sets and differential rankings. Earlier, we introduced the ring of differential polynomials in one differential indeterminate, which we denoted by $K\{Y\}$ most often. Let us now have finitely many differential indeterminates $\{y_1,\ldots,y_n\}$ (in the proof of Theorem~\ref{theorem:fgtrans}, we will just need the case of $n=2$). 
A {\it ranking} 
is a total order $>$ on the set of derivatives $$D := \left\{y_i^{(p)}\:\big|\: p\geq 0,\ 1\leq i\leq n\right\}$$
satisfying the following conditions for all $p > 0$ and
$u,v\in D$:
\begin{enumerate}
\item $u^{(p)} > u$, and
\item $u \geq v \Longrightarrow u^{(p)} \geq v^{(p)}$.
\end{enumerate}
Similarly to our previous introduced notation, $K\{y_1,\ldots,y_n\} := K[D]$ (as commutative rings, and the $\delta$-action is defined naturally) and, for $u = y_j^{(q)}$, $\ord u:=q$. If $f \in K\{y_1,\dots,y_n\}\setminus K$, then $\ord f$ denotes the maximal order of
the derivatives appearing effectively in $f$.

A ranking $>$ is called
{\it orderly} if, for all $u$, $v \in D$, $\ord u > \ord v$ implies $u > v$. A ranking $>$ is called {\it elimination} with $y_{i_1} < \ldots < y_{i_n}$ if $$y_{i_j}^{(p)} < y_{i_{j+1}}^{(q)},\quad p,\: q \geq 0,\quad 1\leq j \leq n.$$ 
For example, in an orderly ranking with $y_1 < y_2$, we have: $y_1' > y_2$ and $y_1' < y_2'$. However, in an elimination ranking with $y_1 < y_2$, we have: $y_1' < y_2$ and $y_1' < y_2'$.

Let a ranking $>$  be fixed on $S$.
The derivative $y_j^{(p)}$ of the highest rank appearing
in  $f \in K\{y_1,\dots,y_n\} \setminus K$
is called the {\it leader} of $f$, which we denote by $u_f$.
Represent $f$ as a univariate polynomial in $u_f$:
\begin{equation}\label{eq:leader}
f = i_f u_f^d + a_1 u_f^{d-1} + \ldots + a_d.
\end{equation}
The monomial $u_f^d$ is called the {\it rank} of $f$. 
Extend the ranking relation on derivatives  to ranks:
$u_1^{d_1}>u_2^{d_2}$ if either $u_1>u_2$, or
$u_1=u_2$ and $d_1>d_2$.
As at the beginning of Section~\ref{sec:Kronecker},
the polynomial $i_f$ is called the {\it initial} of $f$.
Applying $\delta$ to $f$, we obtain
$$ \delta f = \frac{\partial f}{\partial u_f}\delta u_f + \delta i_f u_f^d +
\delta a_1 u_f^{d-1}+\ldots + \delta a_d.
$$
The leader of $\delta f$ is $\delta u_f$, and the initial
of $\delta f$ is called the {\it separant} of $f$, which is denoted by $s_f$.
Note that the initial of any
proper derivative (i.e., a derivative of order greater than $0$) of $f$ is equal to $s_f$. 
For example, if $K=\Q(x)$, $\delta(x) =1$, the ranking is elimination with $y_1 > y_2$ and $$f = (y_2y_1'+1)y_1''^2+ y_1'^2y_2'^3y_1''+y_1y_2^3+x,$$ then $u_f =y_1''$, the rank of $f$ is $y_1''^2$, $i_f = y_1'y_2+1$,
\begin{align*}
\delta f = &\left(2(y_2y_1'+1)y_1''+y_1'^2y_2'^3\right)y_1'''+\\
&+(y_2y_1''+y_2'y_1')y_1''^2 + (2y_1'y_1''y_2'^3+3y_1'^2y_2'^2y_2'')y_1''+y_1'y_2^3+3y_1y_2^2y_2'+1
\end{align*}
and $s_f = 2(y_2y_1'+1)y_1''+{y_1'}^2{y_2'}^3$.

We say that  $f\in  K\{y_1,\dots,y_n\}$ is {\it partially reduced}
w.r.t. $g \in K\{y_1,\dots,y_n\}\setminus K$ if no proper derivative of $u_g$ appears in $f$.
Also, $f$ is said to be {\it algebraically reduced} w.r.t.
$g$ if $\Deg_{u_g} f<\Deg_{u_g}g$.
Finally, $f$ is called {\it reduced} w.r.t.  $g$ if $f$ is partially and algebraically reduced
 w.r.t. $g$. Let
$A \subset K\{y_1,\ldots,y_n\}\setminus K$. 
For example, $y_1'+y_2$ is reduced w.r.t. $y_2'+y_1$ in an orderly ranking but is not partially reduced w.r.t. $y_2'+y_1$ in an elimination ranking with $y_1 > y_2$. Also, $y_1$ is partially reduced but not reduced w.r.t. $y_1$.
We say that
$A$ is {\it autoreduced} 
if each element of $A$ is reduced
w.r.t. all the others. So, in an orderly ranking, the set $y_1'+y_2, y_2'+y_1$ is autoreduced. However, it is not autoreduced in any elimination ranking.

Every autoreduced set is finite
\cite[Chapter I, Section 9]{KolchinBook1973}. For such sets we use the
notation $A = A_1,\ldots,A_p$ to specify the list of the elements
of $A$ arranged in order of increasing rank.
We denote the sets of initials and separants of
elements of $A$ by $i_A$ and $s_A$,
 respectively. Let $H_A=i_A\cup s_A$.
For a finite set $S$  in a commutative ring $R$, denote 
 the smallest multiplicative set containing $1$ and
 $S$ by $S^\infty$. Let $I$ be an ideal of $R$.
The {\it colon ideal} $I:S^\infty$ is defined as
$$\{a \in R\:|\:\exists s \in S^\infty: sa \in I\}.$$ If $R$ is a differential ring $I$ is a
differential ideal, then $I:S^\infty$ is also a differential ideal
(see \cite{KolchinBook1973}).

Let $A = A_1,\ldots,A_r$ and $B = B_1,\ldots,B_s$
 be autoreduced  sets. We say the rank of $A$ is lower than the rank of
$B$ if
\begin{itemize}
\item
there exists $k \leq \min(r, s)$ such that $\rank A_i$ =
$\rank B_i$ for all $i$, $1 \leq i < k$, and $\rank A_k < \rank B_k$,
\item or if $r > s$ and $\rank A_i = \rank B_i$ for all $i$, $1 \leq i \leq s$.
\end{itemize}
We say that
 $\rank A = \rank B$ if $r=s$ and
$\rank A_i = \rank B_i$ for all $i$, $1 \leq i \leq r$. 
For instance, let $A := y_1'+y_2, y_2'+y_1$ and $B := y_1'+y_2$. Then, in an orderly ranking with $y_1 < y_2$, the rank of $A$ is lower than the rank of $B$ because $\rank A_1=\rank B_1$, but $A$ has more elements than $B$ does. For an orderly ranking with $y_2 < y_1$, we write $A = y_2'+y_1,y_1'+y_2$, and the rank of $A$ is lower than the rank of $B$ because now $\rank A_1 <\rank B_1$.

For a differential ideal $I \subset K\{y_1,\ldots,y_n\}$, its autoreduced subset of the
least rank is called a {\it characteristic set} of $I$ \cite[p.~82]{KolchinBook1973}. If the differential ideal $I$ is prime and $C$ is its characteristic set, then $I = [C]:H_C^\infty$ \cite[Lemma~2, Chapter~IV, Section~9]{KolchinBook1973}. For example, the smallest radical differential ideal $I$ of $K\{y\}$ containing $y'^2+y$ is not prime, having two minimal differential prime ideals containing it, $P_1 = [y'^2+y, 2y''+1]$ and $P_2=[y]$. Moreover, $y'^2+y$ and $y$ are characteristic sets of $P_1$ and $P_2$, respectively, and $P_1 = [y'^2+y]:y'^\infty$. Also, $y'^2+y$ is a characteristic set of $I$, but $[y'^2+y]:y'^\infty$ strictly contains $I$.

\begin{theorem}
\label{theorem:fgtrans}
Consider any computable ordinary nonconstant differential field $K$, and
assume that $\Khat$ is not an algebraic field extension of $K$.
Let $K\la z\ra$ be
a computable differential field extension generated by an element
$z$ which is differentially transcendental over $K$, presented so that
$K$ is a computably enumerable subset of $K\la z\ra$.
Then the constraint set $\overline{T_{K\la z\ra}}$
is computable in an oracle for $T_K$.  Also, the constrainability set
$\overline{U_{K\la z\ra}}$ is computably enumerable relative
to $T_K$.  Both the computation of $T_{K\la z\ra}$
and the enumeration of $\overline{U_{K\la z\ra}}$
are uniform in $z$ and $T_K$.
\end{theorem}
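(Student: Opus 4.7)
The plan is to adapt the strategy of Theorem~\ref{theorem:fgalg} by specializing the differentially transcendental element $z$ to a carefully chosen $w\in\Khat$ of sufficiently large order over $K$, thereby reducing constraint-testing over $K\la z\ra$ to constraint-testing over the constrained extension $K\la w\ra$, which Theorem~\ref{theorem:fgalg} already handles.

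Given $(p,q)\in K\la z\ra\{Y\}^2$, let $r$ be the maximum $i$ such that $\del^i z$ appears in the coefficients of $p$ or $q$. Using the $T_K$-oracle, I would first extract $S_K\leq_T T_K$ via Theorem~\ref{theorem:constraints}, then obtain $S_{K\la z\ra}$ by iterating part~(iii) of Kronecker's Theorem~\ref{theorem:Kronecker} along the chain $K\subseteq K(z)\subseteq K(z,\del z)\subseteq\cdots$, using the algebraic independence of $\{\del^i z\}_{i\geq 0}$ over $K$. This lets me verify that $p$ is monic and irreducible in $K\la z\ra\{Y\}$ and that $\ord_Y(q)<\ord_Y(p)$; failure of either puts $(p,q)$ into $T_{K\la z\ra}$. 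Next, using the $T_K$-computability of $\ord_K$ and $D_K$ on $\Khat$ (again Theorem~\ref{theorem:constraints}) together with Lemma~\ref{lem:difoveralgclosed}, I locate $w\in\Khat$ with $\ord_K(w)>r$, so that $\{w,\del w,\ldots,\del^r w\}$ is algebraically independent over $K$. Substituting $\del^i z\mapsto\del^i w$ in the coefficients of $p$ and $q$ produces $p_w,q_w\in K\la w\ra\{Y\}$.

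The technical heart of the proof is the equivalence
\[(p,q)\in\overline{T_{K\la z\ra}}\iff(p_w,q_w)\in\overline{T_{K\la w\ra}}.\]
The $(\Leftarrow)$ direction, by contrapositive, follows from Lemma~\ref{lemma:counterEX}: a non-constraint witness for $(p,q)$ produces $g\in K(z,\ldots,\del^r z)\{Y\}$ of strictly lesser rank in $z$ distinguishing two roots, and its specialization $g_w\in K(w,\ldots,\del^r w)\{Y\}$ is nonzero (by algebraic independence of $\{w,\ldots,\del^r w\}$) and yields a non-constraint witness for $(p_w,q_w)$ in $\widehat{K\la w\ra}$. The $(\Rightarrow)$ direction is where the characteristic-set machinery introduced just before the theorem becomes indispensable: viewing the prime differential ideal $[p]:h_p^\infty$ in $K\{z,Y\}$ under the elimination ranking $z\prec Y$, with characteristic set $\{p\}$, one argues that primality, the characteristic-set identification, and non-vanishing of $q$ on the generic zero all descend to $K\la w\ra\{Y\}$ after substitution, precisely because $w$'s order exceeds $r$ so that no spurious algebraic relation among the coefficients can arise. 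Once the equivalence is in place, Theorem~\ref{theorem:fgalg} applied to $(p_w,q_w)$ over the constrained extension $K\la w\ra$ of $K$ gives the desired $T_K$-computable test for $\overline{T_{K\la z\ra}}$, and enumerability of $\overline{U_{K\la z\ra}}$ relative to $T_K$ follows by enumerating candidate constraints $q$ for each $p$ and testing via the oracle.

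The main obstacle I anticipate is making the $(\Rightarrow)$ direction airtight: a priori, specialization could coincidentally preserve constraint-ness when it should not, or could destroy it through interaction with the minimal differential polynomial of $w$ over $K$. The elimination-ranking characteristic-set formalism, combined with the fact that $w$'s high order ensures $\{w,\del w,\ldots,\del^r w\}$ behaves algebraically over $K$ exactly like $\{z,\del z,\ldots,\del^r z\}$, is the uniform algebraic handle needed to rule out these pathologies; this is also the step where the hypothesis that $\Khat$ is not algebraic over $K$ is used, via Lemma~\ref{lem:difoveralgclosed}.
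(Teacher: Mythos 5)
Your overall strategy is the same as the paper's: specialize the differential transcendental $z$ to a constrained element of $\Khat$ of large order over $K$, decide the specialized pair using Theorem~\ref{theorem:fgalg}, and use Lemma~\ref{lem:difoveralgclosed} to guarantee such elements exist. But there is a genuine gap at the step you treat as routine, namely the transfer of \emph{non}-constraint from $K\la z\ra$ to $K\la w\ra$ (your $(\Leftarrow)$ contrapositive). Knowing that the specialization $g_w$ of the distinguishing polynomial is nonzero --- which is all the algebraic independence of $\{w,\del w,\ldots,\del^r w\}$ buys you --- does not produce a non-constraint witness: you need the specialized system $p_w=g_w=0\neq q_w$ to remain consistent in $\widehat{K\la w\ra}$, i.e.\ $1\notin\sqrt{[p_w,g_w]}:q_w$ together with $g_w\notin\sqrt{[p_w]}:q_w$ (Lemma~\ref{lemma:constraintvariety} and Corollary~\ref{cor:constraintvariety}). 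Differentiating $p_w$ and $g_w$ brings in derivatives of $w$ of order well beyond $r$, and these satisfy the minimal differential polynomial of $w$; if that polynomial has order only $>r$, the colon ideal can become the unit ideal and the witness evaporates. This is exactly why the paper sets $b=\ord_z(p_0)+\ord_Y(p_0)$, uses \cite[Proposition~14]{Bounds} to bound by $2b$ the $z$-order of a one-element characteristic set $\{C_1\}$ (elimination ranking $z<Y$) of a minimal prime component of $\sqrt{[p_0,h_0]}:q_0$, and then demands that the minimal differential polynomial of the specializing element have order at least $2b+1$, so that $\{f,C_1\}$ is autoreduced and the Rosenfeld Lemma \cite[Theorem~4.8]{HubertDif} yields $1\notin\sqrt{[\ptilde,\htilde]}:\qtilde$. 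Your threshold $\ord_K(w)>r$, with $r$ only the order of the $z$-derivatives in the coefficients of $p$ and $q$, is too weak, and nothing in your sketch rules out the collapse.

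Relatedly, you have the difficulties of the two directions reversed. The direction you flag as the ``technical heart'' --- constrained over $K\la z\ra$ implies constrained after specialization --- is the monotone, easy one in the paper: $1\in\sqrt{[p_0,h_0]}:q_0$ for every lower-rank $h_0$ persists when the extra relation $f=0$ is imposed and $z$ is replaced by $\ztilde$, and every lower-rank $\htilde$ over $K\la\ztilde\ra$ lifts back to such an $h_0$ by clearing denominators; the characteristic-set and Rosenfeld machinery is needed precisely for the non-constraint transfer you dismiss. A minor structural difference (not a flaw): the paper does not fix one $w$ per pair but writes a $T_K$-relative existential definition of $\overline{T_{K\la z\ra}}$, quantifying over all $(f,g)\in\TKbar$ of sufficiently high order and all $\ztilde$ satisfying them, and runs it against the oracle-free existential definition of $T_{K\la z\ra}$; your ``choose a single $w$'' variant could be made to work, but only after raising the order threshold to $2b+1$ and supplying the consistency argument above. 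The remaining ingredients you use ($S_{K\la z\ra}\leq_T T_K$ via Kronecker part~(iii), $D_K$ and $\ord_K$ from Theorem~\ref{theorem:constraints}, and the enumeration of $\overline{U_{K\la z\ra}}$ once $T_{K\la z\ra}\leq_T T_K$ is known) are fine and match the paper.
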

\begin{proof}
It is only necessary to show that $T_{K\la z\ra}$
has both an existential definition and a universal definition,
in which the quantifier-free parts are allowed to
use the relation of membership in the oracle set $T_K$.
Of course, $T_{K\la z\ra}$ is computably enumerable
without any oracle; indeed, its existential definition
comes straight from Definition \ref{definition:constraint},
with no $T_K$-oracle required:
\begin{align*}
(p,q)\in T_{K\la z\ra} \iff &\big(\exists x,y\in\widehat{K\la z\ra}\big)(\exists h\in K\la z\ra\{ Y\})\\
&[h(x)=0=p(x)\neq q(x)~\&~h(y)\neq 0=p(y)\neq q(y)].
\end{align*}
So we need only give a universal definition of $T_{K\la z\ra}$
(equivalently, an existential definition of $\overline{T_{K\la z\ra}}$)
relative to $T_K$.  For any $p,q\in K\la z\ra\{ Y\}$, clear the denominators
of the coefficients to form polynomials $p_0,q_0\in K\{ z,Y\}$.
We claim that
\begin{align*}
\label{eq:Edefinition}
(p,q)\in &\overline{T_{K\la z\ra}} \iff
(\exists f,g\in K\{ Z\})\big(\exists \ztilde \in\Khat\big) \big[(f,g)\in\TKbar~\&~f(\ztilde)=0\neq g(\ztilde)\\
&\&~(p_0(\ztilde,Y),q_0(\ztilde,Y))\in\overline{T_{K\la\ztilde\ra}}~\&~p\in
K\big(z,\del z,\ldots,\del^{(\ord(f)-\ord_Y(p)-1)/2}z\big)\{ Y\}\big]
\end{align*}
Once we have proven this equivalence, we will have
a decision procedure for $T_{K\la z\ra}$ relative to $T_K$:
given input $(p,q)$, search simultaneously for witnesses
to either of the two existential statements above.
(Notice that a $T_K$-oracle will decide membership in $T_{K\la\ztilde\ra}$,
for every $\ztilde\in\Khat$, uniformly in $\ztilde$, according
to Theorem \ref{theorem:fgalg}.)  Eventually,
this procedure must find a witness for one or the other,
and when it does, we have determined whether $(p,q)\in T_{K\la z\ra}$ or not.

For the forwards implication, let $(p,q)\in\overline{T_{K\la z\ra}}$.
Then, for every $h_0\in K\{ z,Y\}$ of rank (with respect to $Y$) less
than the rank of $p_0$, we know that $$1\in \sqrt{[p_0,h_0]}:q_0,$$
by Corollary \ref{cor:constraintvariety}.
Therefore, no matter what $f$ and $g$ we choose from $K\{ Z\}$,
we will have $$1\in \sqrt{[p_0,h_0,f]}:(g\cdot q_0)$$ for every such $h_0$,
and hence $1$ will also lie in the ideal $$\sqrt{[p_0(\ztilde,Y),h_0(\ztilde,Y)]}:q_0(\ztilde,Y)\subset K\la\ztilde\ra\{ Y\}$$ for each $\ztilde\in\Khat$ satisfying $(f,g)$.
Set $$\ptilde(Y)=p_0(\ztilde,Y)\quad \text{and}\quad \qtilde(Y)=q_0(\ztilde,Y).$$
Now every $\htilde\in K\la\ztilde\ra\{ Y\}$ of rank (with respect to $Y$)
less than the rank of $\ptilde$ has lower rank than $p$,
since $\ptilde$ has rank $\leq$ the rank of $p$,
and so viewing $\htilde$ as a polynomial in both
$\ztilde$ and $Y$ and clearing denominators yields an $h_0$ as above
and shows that $$1\in\sqrt{\left[\ptilde,\htilde\right]}:\qtilde.$$
Then we apply Corollary \ref{cor:constraintvariety} once more to see
that $(\ptilde,\qtilde)$ lies in $\overline{T_{K\la z\ra}}$, as required.
By Lemma~\ref{lem:difoveralgclosed}, there must exist a pair $(f,g)\in\TKbar$ with $\ord_z(f)$
large enough that $$p\in K\big(z,\del z,\ldots,\del^{(\ord_z(f)-\ord_Y(p)-1)/2}z\big)\{ Y\},$$
and this pair, along with any $\ztilde\in\Khat$ satisfying it,
satisfies the existential condition given.

We prove the backwards implication by contraposition.
Suppose that $(p,q)\in T_{K\la z\ra}$.
Fix an $r$ such that $$p,q\in K\big(z,\del z,\ldots,\del^rz\big)\{ Y\}.$$
Then there exist elements $x$ and $y$ in the
differential closure of $K\la z\ra$ which both satisfy
$(p,q)$, but such that some $h\in K\{ z,Y\}$
has
\begin{equation}\label{eq:hzxy}
h(z,x)=0\neq h(z,y).
\end{equation}
 Let $h_0(z,Y)\in K\{ z,Y\}$ be the result of multiplying
$h$ by the least common multiple of all denominators
of its coefficients, as with $p_0$ and $q_0$.
 By Lemma \ref{lemma:counterEX},
we may assume that $h_0$  is of rank lower than that of $p_0$
(under the orderly ranking with $z < Y$), that is,
\begin{equation}\label{eq:h0}
\ord_z(h_0) +\ord_Y(h_0) \le \ord_z(p_0) +\ord_Y(p_0) =: b
\end{equation}
holds.
It follows from~\eqref{eq:hzxy} that
\begin{equation}\label{eq:1notin}
1\notin \sqrt{[p,h]}:q\quad\text{and}\quad h\notin\sqrt{[p]}:q
\end{equation}
implying that
\begin{equation}\label{eq:capz}
\sqrt{[p_0,h_0]}:q_0\cap K\{z\} = \{0\}.
\end{equation}
It follows from~\eqref{eq:capz} that there exists  a minimal differential prime component of $$\sqrt{[p_0,h_0]}:q_0$$ such that one of its characteristic sets $C$ with respect to the elimination  ranking with $z < Y$ has the form
$\{C_1\}$ for some irreducible $C_1 \in K\{z,Y\} \setminus K\{z\}$. Indeed, suppose it had  two elements in it, $B_1 < B_2$. We know that a characteristic set is autoreduced, and that $z < Y$ and the ranking is elimination.  So, the leaders of  $B_1$ and $B_2$ must be derivatives of $z$ and $Y$, respectively. Again, since the ranking is elimination and $z < Y$, $B_1$ cannot depend on $Y$, which would contradict~\eqref{eq:capz}.
It now follows from \cite[Proposition~14]{Bounds} and~\eqref{eq:h0} that
\begin{align*}&\ord_z (C_1) \le \ord_z (C_1) + \ord_Y(C_1) \le\\
&\le \max(\ord_z(h_0),\ord_z(q_0),\ord_z(p_0)) + \max(\ord_Y(h_0),\ord_Y(q_0),\ord_Y(p_0)) \le 2b.
\end{align*}
Moreover, for every irreducible
$C_0\in K\{z\}$ with $\ord_z (C_0) > 2b$, the set $C := \{C_0,C_1\}$ is autoreduced and,
by the Rosenfeld Lemma (see \cite[Theorem~4.8]{HubertDif}),
\begin{equation}\label{eq:1notinf}
1\in [C]:H_C^\infty\iff 1 \in (C):H_C^\infty
\end{equation} with the latter statement being impossible because of the choice of $C_0$ and $C_1$ (see \cite[pages~88--90]{Ritt}).
Fix any $(f,g)\in\TKbar$ with $f$ of order $\ge 2b+1$, and any $\ztilde\in \Khat$ satisfying $(f,g)$,
and let $\tilde p$, $\tilde q$, and $\tilde h$
be the result of replacing $z$ by $\tilde z$ in $p_0$, $q_0$, and $h_0$,
respectively. 
Now~\eqref{eq:1notin} and~\eqref{eq:1notinf} imply that
\begin{equation}\label{eq:1notintilde}
1 \notin \sqrt{{\left[\tilde p,\tilde h\right]}}: \tilde q \subset \Khat\{Y\}.
\end{equation}
Therefore, Corollary \ref{cor:constraintvariety} shows that $(\ptilde,\qtilde)\in T_{K\la\ztilde\ra}$.
This completes the proof of the backwards direction, and shows
that our existential definition of $\overline{T_{K\la z\ra}}$ was correct.
\end{proof}
\setlength{\bibsep}{0.0pt}
\bibliographystyle{model1b-num-names}
\normalsize
\bibliography{mainAlexey}
\end{document}